\newtheorem{theorem}{Theorem}[section]
\definecolor{nverde}{RGB}{0,61,0} %nverde
\definecolor{cr1}{RGB}{200,0,0}
\definecolor{cr2}{RGB}{0,0,200}
\definecolor{cr12}{RGB}{100,0,100}
\DeclareMathOperator{\dive}{\nabla \cdot}
\DeclareMathOperator{\gra}{\nabla}
\DeclareMathOperator{\grae}{\nabla}
\DeclareMathOperator{\dS}{\mathrm{dS}}
\DeclareMathOperator{\dV}{\mathrm{dV}}
\DeclareMathOperator{\dt}{\mathrm{dt}}
\DeclareMathOperator{\dbx}{\mathrm{d}\mathbf{x}}
\newcommand{\g}{\mathbf{g}}
\newcommand{\x}{\mathbf{x}}
\newcommand{\XX}{\mathbf{X}}
\newcommand{\V}{\mathbf{v}}
\newcommand{\T}{T}
\newcommand{\btau}{\boldsymbol{\tau}}
\newcommand{\press}{p}
\newcommand{\Press}{p}
\newcommand{\halb}{\frac{1}{2}}
\newcommand{\vel}{\mathbf{u}}
\newcommand{\bw}{\mathbf{w}}
\newcommand{\Vel}{\mathbf{u}}
\newcommand{\W}{\mathbf{w}}
\newcommand{\WW}[1]{\W^{#1}}
\newcommand{\Cell}{C}
\newcommand{\Dt}{\Delta t\,}
\newcommand{\St}{\mathrm{St}}
\newcommand{\Cellst}{\widetilde{\Cell}}
\newcommand{\Nst}{\widetilde{N}}
\newcommand{\Zst}{\mathcal{Z}} 
\newcommand{\Fst}{\widetilde{\mathbf{F}}}
\newcommand{\Bst}{\widetilde{\mathbf{B}}}
\newcommand{\Dst}{\widetilde{\mathbf{D}}}
\newcommand{\Xst}{\widetilde{\mathbf{X}}}
\newcommand{\xst}{\widetilde{\mathbf{x}}}
\newcommand{\Phist}{\widetilde{\Phi}}
\newcommand{\Gammast}{\widetilde{\Gamma}}
\newcommand{\nablast}{\widetilde{\nabla}}
\newcommand{\nst}{\widetilde{\boldsymbol{n}}}
\newcommand{\etast}{\widetilde{\boldsymbol{\eta}}}
\newcommand{\nbetast}{\widetilde{\eta}}
\newcommand{\mV}{\mathbf{v}}
\newcommand{\mVn}{V}
\newcommand{\Un}{u}
\begin{document}

\begin{frontmatter}

\title{An Arbitrary-Lagrangian-Eulerian hybrid finite volume/finite element method on moving unstructured meshes for the Navier-Stokes equations} 

\author[MatVig]{S. Busto}
\ead{saray.busto@unitn.it}

\author[LAM]{M. Dumbser}
\ead{michael.dumbser@unitn.it}

\author[LAM,MatCor]{L. R\'io-Mart\'in\corref{cor1}}
\ead{laura.delrio@unitn.it}

\cortext[cor1]{Corresponding author}

\address[LAM]{Laboratory of Applied Mathematics, DICAM, University of Trento, Via Mesiano 77, 38123 Trento, Italy}

\address[MatVig]{Departamento de Matem\'atica Aplicada I, Universidade de Vigo, Campus As Lagoas Marcosende s/n, 36310 Vigo, Spain}

\address[MatCor]{Departamento de Matem\'aticas, Universidade da Coru\~na, Campus Elvi\~na s/n, 15071 A Coru\~na, Spain}

\begin{abstract}
This paper presents a novel semi-implicit hybrid finite volume / finite element {(FV/FE)} scheme for the numerical solution of the incompressible and weakly compressible Navier-Stokes equations on moving unstructured meshes using a direct Arbitrary-Lagrangian-Eulerian (ALE) formulation. The scheme is based on a suitable splitting of the governing partial differential equations into subsystems and employs a staggered grid arrangement, where the pressure is defined on the primal simplex mesh, while the velocity and the remaining flow quantities are defined on an edge-based staggered dual mesh. The key idea of the scheme presented in this paper is to discretize the nonlinear convective and viscous terms at the aid of an explicit finite volume scheme that employs the space-time divergence form of the governing equations on moving space-time control volumes. For the convective terms, an ALE extension of the Ducros flux on moving meshes is introduced, which can be proven to be kinetic energy preserving and stable in the energy norm when adding suitable numerical dissipation terms. The use of closed space-time control volumes inside the finite volume scheme guarantees that the important geometric conservation law (GCL) of Lagrangian schemes is verified \textit{by construction}. 
Finally, the pressure equation of the Navier-Stokes system is solved on the new mesh configuration at the aid of a classical continuous finite element method, using traditional $\mathbb{P}_1$ Lagrange elements. 

A numerical convergence study confirms that the scheme is second order accurate in space. Subsequently, the ALE hybrid FV/FE method is applied to several incompressible test problems ranging from non-hydrostatic free surface flows over a rising bubble to flows over an oscillating cylinder and an oscillating ellipse. Via the simulation of a circular explosion problem on a moving mesh, we show that the scheme applied to the weakly compressible Navier-Stokes equations is able to capture also weak shock waves, rarefactions and moving contact discontinuities. 
{We also provide numerical evidence which shows that compared to a fully explicit ALE scheme, the semi-implicit ALE method proposed in this paper is particularly efficient for the simulation of weakly compressible flows in the low Mach number limit.} 

\end{abstract}

\begin{keyword}
projection method for the Navier-Stokes equations \sep staggered semi-implicit schemes \sep 
kinetic energy preserving finite volume method \sep 
continuous finite element method \sep 
Arbitrary-Lagrangian-Eulerian (ALE) method \sep 
weakly compressible flows \sep 
space-time control volumes and geometric conservation law (GCL).
\end{keyword}

\end{frontmatter}

%\linenumbers

% % % % % % % % % % % % % % % % % % % % % % % % % % % % % %
% % % % % % % % % % % % % % % % % % % % % % % % % % % % % %
%              Introduction
% % % % % % % % % % % % % % % % % % % % % % % % % % % % % %
% % % % % % % % % % % % % % % % % % % % % % % % % % % % % %
\section{Introduction}\label{sec:intro}

Lagrangian numerical methods that are able to solve time-dependent partial differential equations (PDE) on moving meshes are of fundamental importance for practical applications in science and engineering, in particular for fluid-structure interaction (FSI) problems, where a moving fluid interacts with a moving rigid or elastic solid body, see, e.g. \cite{NobileVergara,LeTallecMouro,Feistauer1,Feistauer2,Feistauer3,Feistauer4}. A nice and extensive literature review on Arbitrary-Lagrangian-Eulerian (ALE) finite element methods for incompressible flow problems and the importance of the geometric conservation law (GCL) can be found in the paper \cite{Etienne2009} and references therein. Pure Lagrangian and Arbitrary-Lagrangian-Eulerian (ALE) schemes are also important in high energy density physics, such as for the numerical simulation of inertial confinement fusion (ICF) processes and the simulation of compressible multi-material flows.
Lagrangian schemes are standard in the context of computational solid mechanics and in the field of computational fluid dynamics they allow to capture moving material interfaces and contact discontinuities without or only very little numerical dissipation. A non-exhaustive review of the most relevant past and recent developments in the field can be found, for example in~\cite{Neumann1950,Caramana1998,Despres2005,Despres2009,Maire2007,Maire2009,LoubereSedov3D,ShashkovCellCentered,ShashkovMultiMat3,chengshu1,Lagrange3D,LagrangeNC,LagrangeDG,KSX17,GBCKSD19}. A fundamental aspect of all aforementioned Lagrangian schemes on moving meshes is the exact verification of the so-called geometric conservation law (GCL) on the discrete level, which states that the rate of change of the volume of a moving control volume $\Cell_i(t)$ must be related to the divergence of the local mesh velocity $\mathbf{v}$, 
\begin{equation}
	\frac{\partial }{\partial t} \int \limits_{\Cell_i(t)} d\mathbf{x} - \int \limits_{\partial \Cell_i(t)} \mathbf{v} \cdot \mathbf{n} \, dS = 0, 
	\label{eqn.GCL}
\end{equation} 
where $\mathbf{n}$ is the outward pointing unit normal vector to the surface of $\Cell_i(t)$. 
The geometric conservation law \eqref{eqn.GCL} is also essential for Lagrangian schemes for incompressible flows, see~\cite{Farhat2001}. In \cite{Lagrange3D}, it was proven that a finite volume scheme based on conservation laws written in space-time divergence form and the integration over closed space-time control volumes automatically satisfies the integral form of the GCL \eqref{eqn.GCL} by construction. In this paper, we will therefore make use of the framework introduced and applied in \cite{Lagrange2D,Lagrange3D,LagrangeDG,LagrangeNC,GDC17,GBCKSD19,Gaburro2021}.    

In order to solve incompressible or weakly compressible flow problems, as well as for the design of all Mach number flow solvers, the use of semi-implicit schemes on staggered meshes is very convenient from a computational point of view. This family of schemes goes back to the pioneering work of Harlow and Welch \cite{HW65} and Casulli et al. \cite{CG84,CC92,CW00}. For the history and an overview of semi-implicit all Mach number flow solvers, see \cite{MRKG03,PM05,DeT11,CordierDegond,DC16,RussoAllMach,AbateIolloPuppo,Thomann2020,BDT2021}. Recent work also concerns the extension of staggered semi-implicit schemes to the discontinuous Galerkin finite element framework, see \cite{DC13,TD16,TD17,FambriDumbser,BTBD20}. 

A family of semi-implicit hybrid finite volume / finite element schemes for solving the incompressible and compressible Navier-Stokes equations and the shallow water equations on fixed staggered unstructured meshes in two and three space dimensions was introduced in a series of papers in  \cite{BFSV14,BFTVC17,BBDFSVC20,BRMVCD21,HybridMPI,HybridNNT,BD22}. These schemes employ an explicit second order accurate finite volume discretization of the nonlinear convective terms and a classical second order continuous Lagrange finite element scheme for the solution of the discrete pressure equation. The staggered mesh arrangement in this class of methods is the same as the one used in \cite{TD16,TD17,BTBD20}, and for the compressible Navier-Stokes equations, the splitting of the governing equations into subsystems is based on the Toro-V\'azquez flux vector splitting, see \cite{TV12}. Here, we extend this approach to the more general case of moving unstructured staggered meshes and combine it for the first time with a kinetic energy preserving Ducros flux \cite{Ducros1999,Ducros2000,Moin2009} instead of the classical Rusanov flux \cite{Rus62} employed in all the previous references. {Another important objective of the proposed semi-implicit ALE scheme is to achieve better computational efficiency for weakly compressible flows in the low Mach number limit compared to fully explicit ALE methods. The time step of explicit ALE schemes is limited by a CFL condition based on the sound speed, while the CFL condition of the new semi-implicit ALE scheme is only based on the fluid velocity.}

The structure of this paper is as follows. Section \ref{sec:goveq} recalls the governing partial differential equations under consideration in this article, namely the incompressible Navier-Stokes equations, and their kinetic energy preserving property in the case of vanishing viscosity, as well as the weakly compressible Navier-Stokes equations in pressure formulation. We next introduce a suitable splitting of the governing PDE system and perform a semi-discretization in time, keeping space still continuous. 
In Section \ref{sec:numdisc}, the new hybrid FV/FE scheme on moving meshes is presented. The nonlinear convective and viscous terms are discretized via an ALE finite volume method based on an integral formulation of a space-time divergence formulation of the convective and viscous subsystem. We furthermore prove the kinetic energy preserving property of the central part of the employed numerical flux on moving meshes. Subsequently, the discretization of the pressure subsystem via continuous finite elements is discussed and the modifications needed to extend the algorithm to weakly compressible flows are detailed. Finally, some key points for the treatment of boundary conditions are provided in Section~\ref{sec:bc}. In Section \ref{sec:numericalresults}, we present a set of numerical results obtained with the new ALE hybrid FV/FE scheme for different benchmark problems. The paper closes with some concluding remarks and an outlook towards future work in Section \ref{sec:conclusions}.  

% % % % % % % % % % % % % % % % % % % % % % % % % % % % % %
% % % % % % % % % % % % % % % % % % % % % % % % % % % % % %
%              Mathematical model
% % % % % % % % % % % % % % % % % % % % % % % % % % % % % %
% % % % % % % % % % % % % % % % % % % % % % % % % % % % % %
\section{Governing equations} \label{sec:goveq}
We will introduce the new family of ALE hybrid FV/FE methods for two different mathematical models: the incompressible Navier-Stokes equations and the compressible Navier-Stokes equations written in terms of a non-conservative pressure equation, which is therefore valid in the context of weakly compressible flows. To this end, in this section we first recall both systems of equations and we then introduce a general space-time divergence form of the convective and viscous subsystem associated to each model, which is later needed for the description of the numerical method.

\subsection{Incompressible Navier-Stokes equations} 

We consider the incompressible Navier-Stokes equations, coupled with the temperature transport equation through the Boussinesq assumption. The governing system of PDE reads 
\begin{subequations}\label{eqn.incns}
\begin{gather}
	\dive \vel = 0,\label{eqn.mass}\\
	\frac{\partial \left(\rho  \vel\right)}{\partial t} + \dive \left(\rho  \vel \otimes \vel\right) 	+ \gra \press 	- \dive \btau   = \rho\g-\rho\beta \left( \T-\T_{\mathrm{ref}}\right) \g,\label{eqn.momentum} \\
	\frac{\partial \T}{\partial t} + \dive \left(\vel \T\right)  - \dive \left( \alpha \gra \T \right)   = 0\label{eqn.temperature}
\end{gather}
\end{subequations}
with $\rho\vel$, $\rho$ and $\vel=(u_1,u_2,u_3)$ the linear momentum, the density and the velocity vector, respectively, $\press$ the pressure, $\btau = \mu\left( \nabla \vel + \nabla \vel^T \right)$ the viscous stress tensor, $\mu$ the laminar viscosity, $\g$ the gravity vector, 
% that unless stated contrary will be taken as $(0,0,-9.81)$, 
$\T$ the temperature, $T_{\mathrm{ref}}$ the reference temperature, $\alpha=\kappa/\left( \rho c_{p}\right) $ the thermal diffusivity, $\kappa$ the thermal conductivity, $c_{p}$ the heat capacity at constant pressure and  $\beta$ the thermal expansion coefficient.
Note that, in the inviscid case, $\mu=0$, and in the absence of gravity and buoyancy, $\mathbf{g}=\mathbf{0}$, $\beta=0$, the incompressible Navier-Stokes equations reduce to the incompressible Euler equations:
\begin{gather}
	\dive \vel = 0,\label{eqn.mass.euler}\\
	\frac{\partial \left(\rho  \vel\right)}{\partial t} + \dive \left(\rho  \vel \otimes \vel\right) 	+ \gra \press 	= 0.
	\label{eqn.momentum.euler} 
\end{gather}
Via multiplication of the momentum equation \eqref{eqn.momentum.euler} with the velocity vector, $\vel$, and making use of the incompressibility constraint \eqref{eqn.mass.euler} it is easy to prove that the system \eqref{eqn.mass.euler}-\eqref{eqn.momentum.euler} satisfies the following extra conservation law for the kinetic energy density 
\begin{equation*}
	\frac{\partial}{\partial t} \left(\halb \rho  \vel^2 \right) + \dive \left(\vel  \, \halb \rho  \vel^2 \right) 	+ \dive \left( \vel \press \right) = 0.
%	\label{eqn.kinen.euler} 
\end{equation*}

To discretize the system \eqref{eqn.mass}-\eqref{eqn.temperature}, we will make use of a hybrid FV/FE strategy, similar to the one proposed in \cite{BFTVC17,HybridMPI} within the Eulerian framework. Accordingly, the starting point of the semi-implicit ALE hybrid FV/FE method proposed in this paper is the definition of a suitable split of the governing PDE system~\eqref{eqn.incns} into two subsystems. The first one is hyperbolic-parabolic and will later be discretized at the aid of explicit finite volume schemes on moving control volumes. It contains the nonlinear convective terms, the viscous terms and eventual algebraic source terms. The second subsystem is elliptic and is therefore discretized via classical continuous finite elements. It contains the divergence free condition of the velocity field and the pressure gradient in the momentum equation. These subsystems read as follows: 

\paragraph{Convective and viscous subsystem}
%\begin{subequations}\label{eqn.inc.conv.sub}
\begin{gather*}
	\frac{\partial \left(\rho  \vel\right)}{\partial t} + \dive \left(\rho  \vel \otimes \vel\right) 	 	- \dive \btau   = \rho\g-\rho\beta \left( \T-\T_{\mathrm{ref}}\right) \g,    \\
	\frac{\partial \T}{\partial t} +  \dive \left( \vel T\right)   - \dive \left( \alpha \gra \T \right)   = 0.  %\label{eqn.conv.sub}
\end{gather*}
%\end{subequations}
\paragraph{Pressure subsystem}
\begin{subequations}\label{eqn.inc.velo.sub}
\begin{gather}
	\dive \vel = 0, \\
	\frac{\partial \left(\rho  \vel\right)}{\partial t} 	+ \gra \press 	= 0. 
\end{gather}
\end{subequations}

\subsection{Weakly compressible Navier-Stokes equations}
To extend the methodology to weakly compressible flows, we will consider the compressible Navier-Stokes equations. Since our aim is not to solve high Mach number flows, but weakly compressible flows with relatively smooth solutions, the compressible Navier-Stokes equations can be reformulated by replacing the energy conservation law by a non-conservative pressure evolution equation,  thus obtaining the following system:
\begin{subequations}\label{eqn.cns}
\begin{gather}
	\frac{\partial \rho}{\partial t} + \dive  \rho\vel =0, \label{eqn.weak.mass}\\
	\frac{\partial \rho\vel}{\partial t} + \dive \left(\rho  \vel \otimes \vel\right)  + \grae \press - \dive \btau  = \rho \g,\label{eqn.weak.momentum}\\
	\frac{\partial \press}{\partial t}  + \vel \cdot \grae \press - c^2 \vel\cdot\grae  \rho + c^2 \dive \left(\rho\vel\right) + \left(\gamma-1\right)\left(\dive \mathbf{q} -  \btau  : \gra \vel \right) =0, \label{eqn.weak.pressure}
\end{gather} 
\end{subequations}
with $c=\sqrt{\frac{\gamma \press}{\rho}}$ the isentropic sound speed corresponding to the ideal gas equation of state (EOS), $\gamma$ the heat capacity ratio, $\mathbf{q} = -\kappa \grae T$ the heat flux and $\btau$ the viscous stress tensor that takes the form \mbox{$\btau =\mu \left(\gra \vel + \gra \vel^{T} \right) -\frac{2}{3} \mu \left( \dive \vel  \right) \mathbf{I}$,} with $\mathbf{I}$ being the identity matrix. 
Like for the incompressible case, we now perform a splitting of the above system into a subsystem containing the nonlinear convective and viscous terms together with the non-conservative product, arising in the pressure equation, and a pressure subsystem leading to: 

\paragraph{Convective and viscous subsystem}
%Option B: that's what we where doing in the code and for the Eulerian case
\begin{subequations}	\label{eqn.cns.conv.sub}
\begin{gather}
	\frac{\partial \rho}{\partial t} + \dive  \rho\vel =0,\label{eqn.weak.mass.cv}\\
	\frac{\partial \rho\vel}{\partial t} + \dive \left(\rho  \vel \otimes \vel\right)   - \dive \btau  = \rho \g,\label{eqn.weak.momentum.cv}\\
	\frac{\partial \press}{\partial t}  + \vel \cdot \grae \press  + \left(\gamma-1\right)\dive \mathbf{q}  = 0. \label{eqn.weak.pressure.cv}
\end{gather} 
\end{subequations}
% Option A: it also seem to work for CE (opc_tautermpress=1)
%\begin{gather}
%	\frac{\partial \rho}{\partial t} + \dive  \rho\vel =0,\label{eqn.weak.mass.cv}\\
%	\frac{\partial \rho\vel}{\partial t} + \dive \left(\rho  \vel \otimes \vel\right)   - \dive \btau  = \rho \g,\label{eqn.weak.momentum.cv}\\
%	\frac{\partial \press}{\partial t}  + \vel \cdot \grae \press  + \left(\gamma-1\right)\left(\dive \mathbf{q} - \btau  : \gra \vel \right) = 0,\label{eqn.weak.pressure.cv}
%\end{gather} 

\paragraph{Pressure subsystem}
% Option B
\begin{subequations}\label{eqn.cns.press.sub}
\begin{gather}
	\frac{\partial \rho\vel}{\partial t}  + \grae \press  = 0, \label{eqn.weak.momentum.p}\\
	\frac{\partial \press}{\partial t}   - c^2 \vel\cdot\grae  \rho + c^2 \dive \left(\rho\vel\right) - \left(\gamma-1\right)  \btau  : \gra \vel  = 0.\label{eqn.weak.pressure.p}
\end{gather} 
\end{subequations}
% Option A
%\begin{gather}
%	\frac{\partial \rho\vel}{\partial t}  + \grae \press  = 0 \label{eqn.weak.momentum.p}\\
%	\frac{\partial \press}{\partial t}   - c^2 \vel\cdot\grae  \rho + c^2 \dive \left(\rho\vel\right)  = 0,\label{eqn.weak.pressure.p}
%\end{gather} 

Let us note that the former non-conservative reformulation of the equations is equivalent to the compressible Navier-Stokes equations only for smooth flows, i.e. for small to medium Mach numbers.
Nevertheless, as it has been shown, e.g. in \cite{BBDFSVC20}, this non conservative formulation may result very useful with respect to a fully conservative system since it is still able to properly capture small shocks using a simpler pressure subsystem and with a lower computational cost. 
For all Mach number flow solvers, we refer the reader to \cite{Hybrid2,PM05,AbateIolloPuppo,RussoAllMach,TD17,Thomann2020} in the context of hybrid FV/FE, FV and DG schemes on fixed Eulerian meshes, while for high order ALE-FV and ALE-DG methodologies for compressible flows we refer to \cite{LagrangeNC,LagrangeDG,GBCKSD19}.

\subsection{Space-time divergence form of the convective and viscous subsystems}

With the state vectors $\W=(\rho \vel, T)$ for the incompressible case and $\W=(\rho, \rho \vel, \press)$ for the weakly compressible Navier-Stokes equations, the former convective and viscous subsystems can be written in a compact space-time divergence formulation as follows, see e.g. \cite{Lagrange3D,LagrangeDG,GBCKSD19}: 
\begin{equation}
	\nablast \cdot \Fst(\W,\nabla \W) + \Bst(\W) \cdot \nablast \W = \mathbf{S}\left( \W,\nabla \W \right), 	\label{eqn.stdiv}  
\end{equation} 
where the space-time nabla operator $\nablast$,  the space-time flux $\Fst(\W,\nabla \W)$ and the matrix $\Bst(\W)$ appearing in the non-conservative product  are defined as 
\begin{equation*}
	\nablast = \left( \frac{\partial}{\partial t}, \nabla \right), \qquad 
	\Fst = \left( \W, \mathbf{F}(\W,\nabla \W) \right), \qquad 
	\Bst(\W) = \left( 0, 
	\mathbf{B}_1(\W), 
	\mathbf{B}_2(\W), 
	\mathbf{B}_3(\W)  
	\right),
\end{equation*}
and the flux $\mathbf{F}$ can be decomposed into $\mathbf{F}= \mathbf{F}_c + \mathbf{F}_v$, being $\mathbf{F}_c$ and $\mathbf{F}_v$ the fluxes for the convective and the viscous terms, respectively.

% % % % % % % % % % % % % % % % % % % % % % % % % % % % % %
% % % % % % % % % % % % % % % % % % % % % % % % % % % % % %
%              Numerical discretization
% % % % % % % % % % % % % % % % % % % % % % % % % % % % % %
% % % % % % % % % % % % % % % % % % % % % % % % % % % % % %
\section{Numerical discretization} 
\label{sec:numdisc}

In this section, we present the ALE hybrid FV/FE numerical method, which can be divided into four  stages:
\begin{enumerate}
	\item Mesh motion. First, the local mesh velocity is chosen according to one of the following two options: 
	\begin{enumerate}
		\item one option is to impose a given mesh velocity on the boundaries of the computational domain and then to solve a Laplace equation in the interior in order to obtain a smooth field for the mesh velocity everywhere; 
		\item another option is to move the mesh with a smoothed version of the local fluid velocity.
	\end{enumerate}              
	\item Transport-diffusion stage. Second, the hyperbolic-parabolic subsystem containing the convective and viscous terms is solved and an intermediate solution of the conserved variables is obtained by means of an explicit finite volume method on a moving unstructured mesh. In this stage we make use of the space-time divergence form \eqref{eqn.stdiv} of the governing PDE system.  
	\item Projection stage. Next, the equation for the pressure is solved by using continuous implicit finite elements on the new mesh.
	\item Post-projection stage. Finally, the momentum is updated, with the contribution of the pressure field computed in the projection stage.
\end{enumerate}
In what follows, we first introduce the employed staggered unstructured grids on moving meshes, describing the primal and the dual meshes and the notation used on both grids.  
Then, we explain the finite volume discretization of the convective and viscous subsystems on moving unstructured meshes, describe the mesh motion, including how the mesh velocity is chosen, and provide some details about the finite element discretization of the pressure subsystem. Finally, we prove that the kinetic energy is preserved for the proposed scheme when using the Ducros flux function, and we complete the description of the weakly compressible flow solver.

\subsection{Staggered unstructured mesh} 

For the sake of simplicity, we start focusing on describing the two-dimensional meshes at time $t^n$ while its movement will be shown in Section \ref{sec:mesh_motion}. We employ so-called edge-type or face-type unstructured staggered grids, which have already been widely used in the framework of FV, hybrid FV/FE, and DG methods, see e.g. \cite{BDDV98,BFSV14,TD14,TD16,BTBD20,BBDFSVC20,HybridMPI}. From now on, the notation with tilde, $\widetilde{\cdot}$, is used for space-time control volumes, variables and mesh notation, while the corresponding space structures at a fixed time are written without it. 
Since we use an ALE approach, the computational grid may change at each time step, so, when necessary, we will indicate the time step with a superindex. 

We discretize the computational domain at time $t^{n}$, $\Omega^{n}=\Omega(t^n)\subset\mathbb{R}^{2}$, by considering a set of disjoint triangles $T_{\Omega}^{n} =\left\lbrace \T^{n}_k, \, k=1,\ldots, n_{el}\right\rbrace$, with vertices $V_{j}$, $j=\left\lbrace1,\dots, n_{ver}\right\rbrace$, such that $\Omega^{n} = \displaystyle{\bigcup^{n_{el}}_k} \,\T_k^{n}$. 
This set of triangles forms the primal mesh (see Figure~\ref{fig:2Dmesh}). 
To design the dual mesh, we consider the barycenters, $B$ and ${B}^{\prime}$, of two triangles which share an edge. 
The interior dual elements are constructed merging the two subtriangles formed by joining these two barycenters with the vertices that belong to the shared edge. 
On the other hand, the boundary dual elements are formed by joining the barycenter of a boundary primal element with the boundary edge (see right sketch in Figure~\ref{fig:2Dmesh}). 
On the dual mesh, for all $i=1,\dots, n_{nod}$, $\Cell_i$ is a cell, $N_i$ is the node of the dual element, $\Gamma_{i}=\partial \Cell_i$ is the boundary of the cell $\Cell_{i}$ with $\mathbf{n}_{i}=(n_x,n_y)$ its outward unit normal, and $\Gamma_{ij}$ is the common edge between $\Cell_{i}$ and $\Cell_{j}$. 
Since we are working in a Lagrangian framework, the processes to generate both meshes is repeated at each time step. 
It is necessary to highlight that each triangle in the primal mesh is formed by the same vertices at any time, although the coordinates of the vertices and also the area of the element may be different. In other words, we do not allow the topology to change. 
Similarly, each element on the dual mesh is formed by the same nodes, but the coordinates and the area potentially change at each time step. 
For an alternative ALE approach allowing mesh topology changes and thus large deformations highly limiting mesh degradation, we refer to \cite{GBCKSD19}. 
Once the grids at two subsequent time steps are designed, the space-time control volumes, $\Cellst_{i}$, can be built by joining the vertex of each dual element at times $t^{n}$ and $t^{n+1}$, so $\Cellst_{i\mid_{t^{n}}}=C_{i}^{n}$, $\Cellst_{i\mid_{t^{n+1}}}=C_{i}^{n+1}$, see Figure \ref{fig.spacetime}. 
We denote $\Nst_i$  the node of $\Cellst_{i}$, ${\Cellst}_{i}^\circ = \Cellst_i \backslash \Gammast_i$ the interior of cell $\Cellst_{i}$, $\Gammast_{i}=\partial \Cellst_i$ the boundary, $\nst_{i}=(\tilde{n}_t,\tilde{n}_x,\tilde{n}_y)$  its corresponding space-time outward unit normal and $\Gammast_{ij}$ the face shared by $\Cellst_{i}$ and $\Cellst_{j}$.
\begin{figure}[h]
	\begin{center}
	\includegraphics[width=0.35\linewidth]{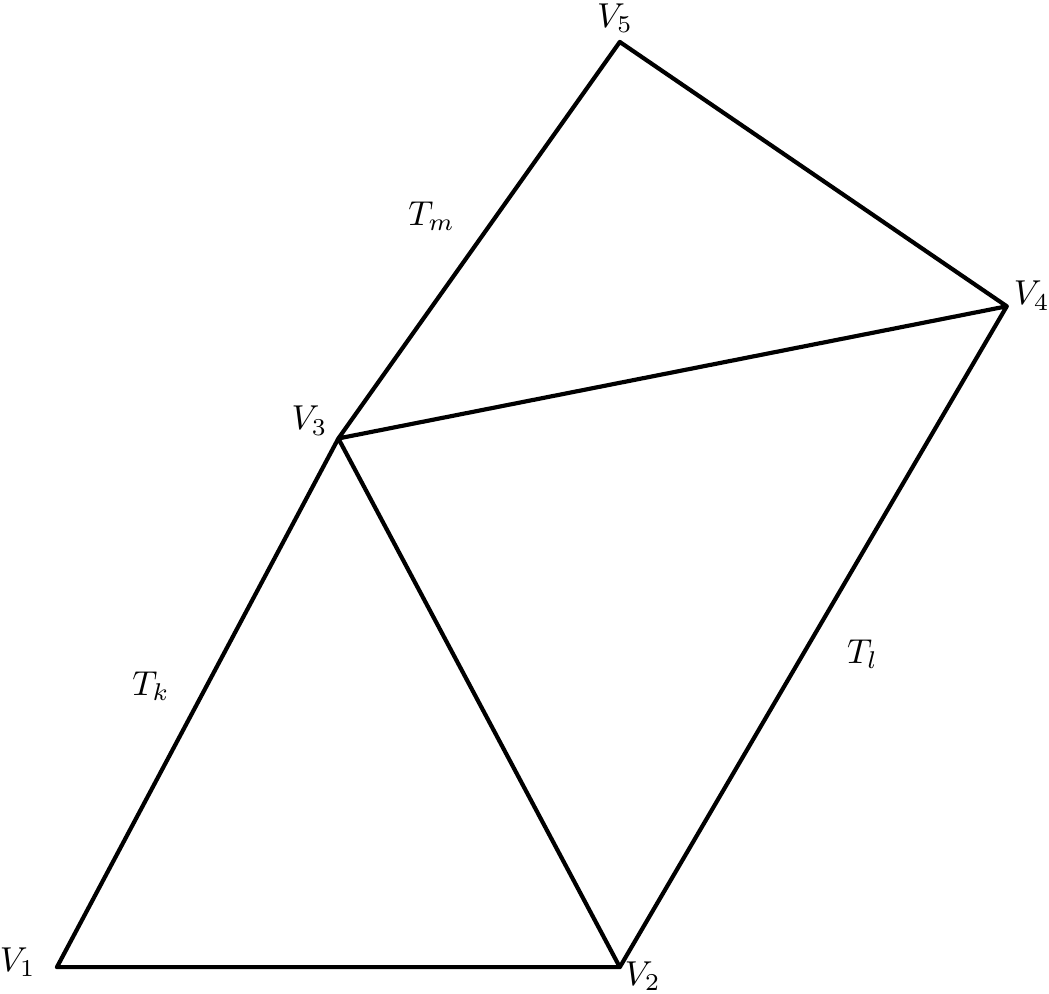}
	\includegraphics[width=0.35\linewidth]{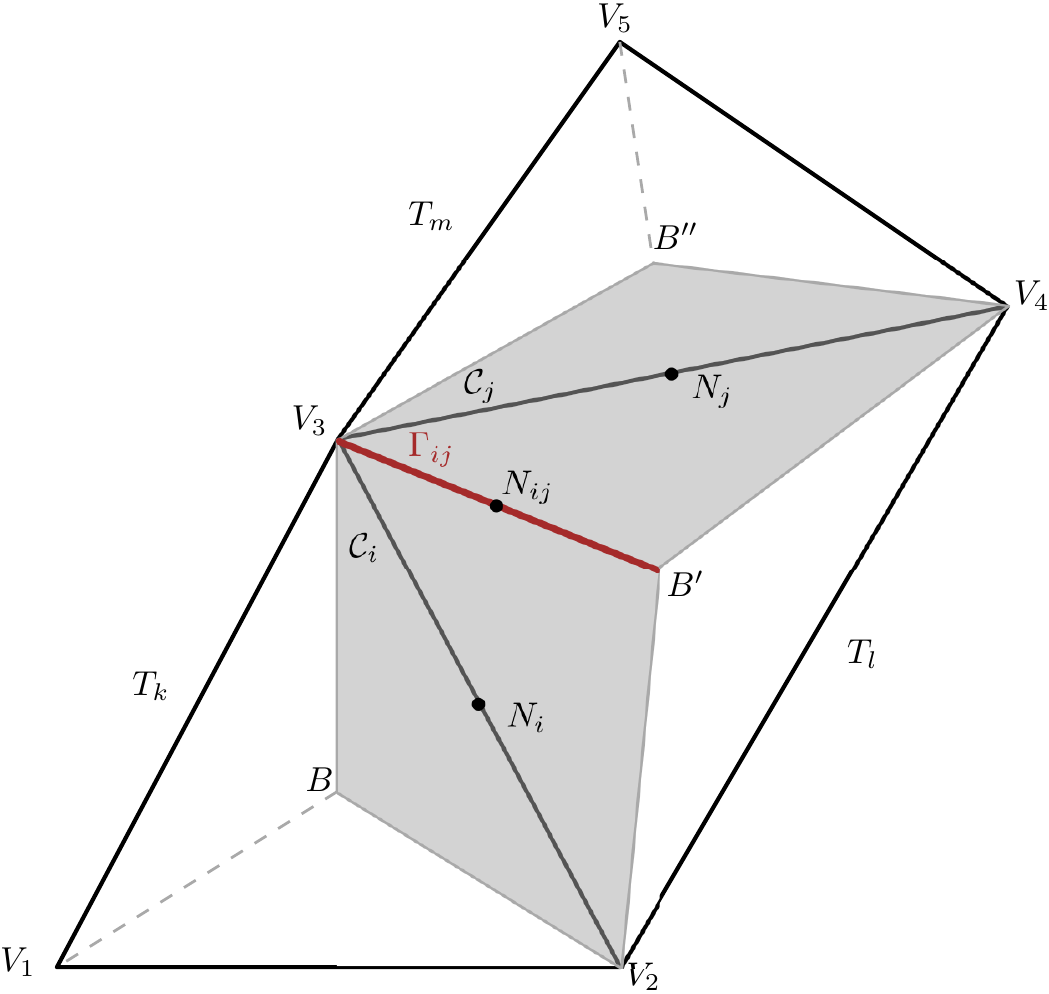}
	\end{center}
	\caption{Sketch of the edge-based unstructured grids in 2D at a fixed time $t^n$. The triangles of the primal mesh are $T_{k}$, $T_{l}$, $T_{m}$ and $V_{j},\, j=1,\dots, 5$ are their vertices (left). The interior elements of the dual mesh, $\Cell_{i}$, are built by joining the barycenters of two adjacent primal elements with the shared edge (grey elements in the right figure) and the boundary elements are built joining the barycenter of one boundary primal element, with the boundary edge (white elements in the right figure).}
	\label{fig:2Dmesh}
\end{figure}
\begin{figure}[h]
	\begin{center}
		\includegraphics[width=0.4\linewidth]{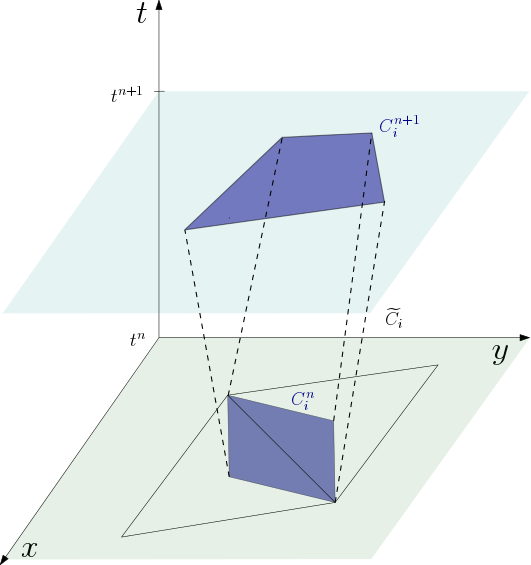}
	\end{center}
	\caption{Space-time dual element, where the dual cells at time $t^n$ and $t^{n+1}$, $\Cell_{i}^{n}$, $\Cell_{i}^{n+1}$, are highlighted in purple. {The space-time control volume, $\Cellst_{i}$, is determined by these two cells and the space-time faces joining each edge at time $t^n$ with itself at time $t^{n+1}$(faces delimited by the dashed lines).} The triangles depicted at time $t^n$ correspond to the two primal elements used for the generation of the dual element $\Cell_{i}^{n}$.}
	\label{fig.spacetime}
\end{figure}

\subsection{ALE finite volume discretization based on the space-time divergence form of the convective and viscous subsystem}\label{sec:numdisc.ALEFV}

In order to discretize the convective and viscous subsystems, we denote $\WW{n}$ an approximation of the solution $\mathbf{w}(\x,t^n)$, that is, $\WW{n}=(\rho \Vel^n , T^n)$ for incompressible flows and $\WW{n}=(\rho^n,\rho \vel^n , \press^n)$ for the weakly compressible case, and consider a space-time control volume $\Cellst_i=[t^{n},t^{n+1}]\times\Cell_i$. Integrating over $\Cellst_i$, equation~\eqref{eqn.stdiv} yields
\begin{equation}
\int_{\Cellst_i}\nablast \cdot \Fst(\WW{n},\nabla \WW{n}) \,\dt\dbx
+ \int_{\Cellst_i}	\Bst(\WW{n}) \cdot \nablast \WW{n}\,\dt\dbx 
= \int_{\Cellst_i}	\mathbf{S}\left( \WW{n},\nabla \WW{n} \right)\,\dt\dbx. 
	\label{eqn.int_stdiv}  
\end{equation}

We now apply Gauss theorem to write the integral of the space-time flux, $\Fst$, as an integral over the space-time surface $\Gammast_{i}$ . Moreover, the space-time surface $\Gammast_i$, is split into six sub-surfaces (see Figure~\ref{fig.spacetime}) as 

\begin{equation*} 
	\Gammast_i = \left(\bigcup \limits_{\Cell_j \in {\cal K}_{i}} \partial \Cellst_{ij}\right) \cup \Cell_i^n \cup \Cell_i^{n+1},
\end{equation*}
where ${\cal K}_{i}$ denotes the adjacent cells to $\Cellst_{i}$, i.e. the Neumann neighbourhood.
Note that, due to the orthogonality of faces $\Cell_{i}^{n}$ and $\Cell_{i}^{n+1}$ with the time axis, their exterior unit normal vectors are $\nst=(-1,0,0)$ and $\nst=(1,0,0)$, respectively, while for the remaining boundaries we have general space-time normals of the form $\nst=(\tilde{n}_t,\tilde{n}_x,\tilde{n}_y)$.
To integrate the non-conservative product, a path-conservative approach is used following~\cite{GCD18,Par06,DCPT09,CFFP09,DHCPT10}. Consequently, the related contributions can be split in two addends: the first one, that involves the potential jumps of $\WW{n}$ between cells and the second one which corresponds to the smooth contribution inside each cell. Thus, \eqref{eqn.int_stdiv} can be rewritten as 
\begin{align}
\int_{\Gammast_i}\Fst(\WW{n},\nabla \WW{n})\cdot \nst \,\dt\dS 
+  \int_{\Gammast_i}\Dst(\WW{n},\nabla \WW{n})\cdot \nst  \,\dt\dS
+\int_{{\Cellst}_{i}^\circ}	\Bst(\WW{n}) \cdot \nablast \WW{n}\,\dt\dbx = \int_{\Cellst_i}	\mathbf{S}\left( \WW{n},\nabla \WW{n} \right)\,\dt\dbx 
	\label{eqn.intsurf_stdiv}  
\end{align}
with $\Fst(\WW{n},\nabla \WW{n})\cdot \nst$ the numerical flux in normal direction and $\Dst(\WW{n},\nabla \WW{n}) = \left( 0, \mathbf{D} (\WW{n},\nabla \WW{n})	\right)$, the path-conservative jump term that will be defined later. { Note that in the above space-time conservation formulation \eqref{eqn.intsurf_stdiv} the local mesh velocity of the ALE scheme is hidden in the time component of the space-time normal vector $\nst$, see also \cite{Lagrange2D,Lagrange3D} for details}. 

As illustrated in Figure~\ref{fig.map}, the space-time sub-faces, $\Gammast_{ij}=\partial\Cellst_{ij}$, can be mapped in a local reference coordinate system $\chi-\tau$ as 

\begin{equation*}
\xst(\chi,\tau) = \Phist_1\Xst_1+\Phist_2\Xst_2+\Phist_3\Xst_3+\Phist_4\Xst_4,
\end{equation*}
where the basis functions $\Phist_i=\Phist_i(\chi,\tau)$ are given by
\begin{equation}
	\Phist_1 = (1-\chi)(1-\tau), \qquad
	\Phist_2 = \chi(1-\tau),     \qquad
	\Phist_3 = (1-\chi)\tau,     \qquad
	\Phist_4 = \chi\tau,        
\end{equation}
with $0\le\chi\le 1$ and $0\le\tau\le 1$, and $\Xst_i$ are the space-time coordinate vectors of the four vertices which form the face, i.e.  
if $\XX_1^n=(x_1,y_1)$ and $\XX_2^n=(x_2,y_2)$ are the coordinates of the two vertices defining one edge at time $t^{n}$, $\XX_1^{n+1}=(x_4,y_4)$ and $\XX_2^{n+1}=(x_3,y_3)$ the coordinates after the mesh movement at time $t^{n+1}$, then the vectors $\Xst_i$ are defined as 
\begin{align*}
\Xst_1 &= (t^{n}, \XX_1^{n})=(t^{n}, x_1, y_1),& \qquad
&\Xst_2 = (t^{n}, \XX_2^{n})=(t^{n}, x_2, y_2),&     \\
\Xst_3 &= (t^{n+1}, \XX_2^{n+1})=(t^{n+1}, x_3, y_3), &    \qquad
&\Xst_4 = (t^{n+1}, \XX_1^{n+1})=(t^{n+1}, x_4, y_4).&        
\end{align*}
Furthermore, the space-time unit normal vector on the space-time face reads $$\nst_{ij}=(\tilde{n}_{ij}^t,\tilde{n}_{ij}^x,\tilde{n}_{ij}^y) = \left(\frac{\partial \xst}{\partial \chi}\times \frac{\partial \xst}{\partial \tau}\right)/\|\frac{\partial \xst}{\partial \chi}\times \frac{\partial \xst}{\partial \tau}\|,$$ 
and its integral is 
\begin{equation}
	\etast_{ij} := \int_{\Gammast_{ij}} \nst_{ij} \, \dt \dS .  
\end{equation}
{Note that the space-time averaged local mesh velocity of the ALE scheme is hidden in the time component of the averaged space-time normal vector $\etast_{ij}$. }  
In particular, in two space-dimensions plus time its components, $\etast_{ij} = (\nbetast_{ij}^t,\nbetast_{ij}^x,\nbetast_{ij}^y)$, read 
\begin{align}
\nbetast_{ij}^t&=\frac{1}{2}\left((y_2 - y_3)(x_1-x_4) + (y_1-y_4)(x_3-x_2)\right),\\
\nbetast_{ij}^x &= \frac{t^n - t^{n+1}}{2}\left(y_1+y_3-y_2-y_4\right),\\
\nbetast_{ij}^y &= \frac{t^n - t^{n+1}}{2}\left(-x_1-x_3+x_2+x_4\right).
\end{align}
\begin{figure}[h]
	\begin{center}
		\includegraphics[width=0.9\linewidth]{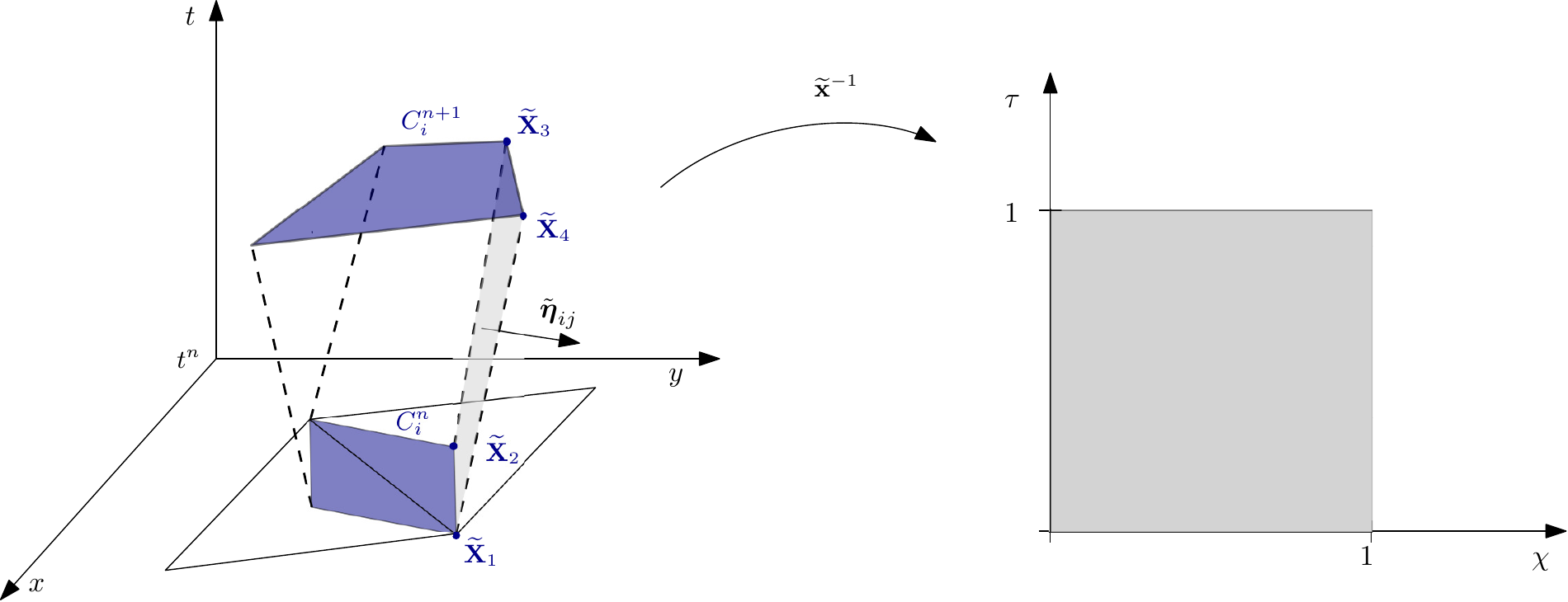}
	\end{center}
	\caption{Parametrization of a dual space-time face. The the cells at time $t^n$ and $t^{n+1}$ are highlighted in purple in the dual element. A face formed by the same edge of a cell at time $t^n$ and $t^{n+1}$ with non-unitary normal vector $\etast_{ij}$ is highlighted in grey (left). The physical space-time face can be mapped to a reference element in a local reference coordinate system $\chi-\tau$ (right).}
	\label{fig.map}
\end{figure}
Then, since the boundary integrals can be split as sum of integrals on the space-time faces, the convective term and the non-conservative jumps in equation~\eqref{eqn.intsurf_stdiv} become

\begin{equation}
\int_{\Gammast_i}\left( \mathbf{F}_c(\WW{n},\nabla \WW{n})\cdot \nst 
+  \mathbf{D}(\WW{n},\nabla \WW{n})\cdot \nst \right) \dt\dS 
=\sum_{\Cell_j \in \mathcal{K}_i} \Zst(\WW{n}_i,\WW{n}_j,\etast_{ij})
\label{eqn.stflux} 
\end{equation}
with 
\begin{equation*}
	\Zst(\WW{n}_i,\WW{n}_j,\etast_{ij}) = \mathcal{F}_c(\WW{n}_i,\WW{n}_j,\etast_{ij}) + \mathcal{D}(\WW{n}_i,\WW{n}_j,\etast_{ij})
\end{equation*}
including both, the numerical flux function for the convective flux, $\mathcal{F}_c$, and the jump term, $\mathcal{D}$. 

The former path-conservative jump term reads 
\begin{equation*}
	\mathcal{D}(\WW{n}_i,\WW{n}_j,\etast_{ij})  =\frac{1}{2}\Bst_{ij} \left(\WW{n}_{j}-\WW{n}_{i}\right), \qquad 
	\Bst_{ij} = \int_{0}^{1} \Bst\left(\psi\left(\WW{n}_{i},\WW{n}_{j},s\right)\right) \cdot\etast_{ij} \mathrm{ds}, 
\end{equation*}
where the chosen straight-line segment path, with $s\in[0,1]$, is defined as 
\begin{equation*}
	\psi = \psi\left(\WW{n}_{i},\WW{n}_{j},s\right) = \WW{n}_{i} + s \left( \WW{n}_{j} - \WW{n}_{i} \right).
\end{equation*}

In order to approximate the numerical flux for the convective part of the incompressible Navier-Stokes equations, 
we consider the Ducros numerical flux function, \cite{Ducros1999,Ducros2000},
\begin{equation}
	\mathcal{F}_c(\WW{n}_i,\WW{n}_j,\etast_{ij})= \displaystyle\frac{1}{2} \left( \WW{n}_i + \WW{n}_j \right) \frac{1}{2} \left(\widetilde{\Vel}^{n}_i+\widetilde{\Vel}^{n}_j\right) \cdot \etast_{ij}
	-\frac{1}{2}	\widetilde{\alpha}^{n}_{ij} \left( \WW{n}_{j}-\WW{n}_{i}\right), \label{eqn.Ducros_flux}
\end{equation}
where $\widetilde{\Vel} = (1, \Vel)$ is the four velocity that allows to write the convective space-time flux tensor of the incompressible Navier-Stokes equations very compactly as \mbox{$\Fst_c(\WW{}) = \WW{} \otimes \widetilde{\Vel} $} and $\widetilde{\alpha}^{n}_{ij}$ is the maximum absolute eigenvalue of the transport-diffusion subsystem, plus some artificial viscosity, $c_{\alpha}$. 
{Although in most test cases $c_{\alpha}=0$, this coefficient can be set to a positive value to increase the robustness of the final scheme. More specifically, in compressible flows large gradients in the pressure and density fields can arise in presence of small velocities. But since in our semi-implicit algorithm the numerical viscosity of the Rusanov and Ducros fluxes scales only with the bulk flow velocity, it may be very small. This causes instabilities that can be avoided by setting an artificial viscosity coefficient $c_{\alpha}>0$, see the numerical results Section. } 
As an alternative numerical flux, which will be always employed for weakly compressible flows, we consider the Rusanov~\cite{Rus62,BBDFSVC20} flux function,
\begin{equation}
	\mathcal{F}_c(\WW{n}_i,\WW{n}_j,\etast_{ij})= \displaystyle\frac{1}{2} \left(\mathbf{F}_c(\WW{n}_i)+\mathbf{F}_c(\WW{n}_j)\right) \cdot \etast_{ij} 
	- \frac{1}{2}	\widetilde{\alpha}^{n}_{ij} \left( \WW{n}_{j}-\WW{n}_{i}\right).
	\label{eqn.Rusanov_flux}
\end{equation}
{Note that the instantaneous local mesh velocity normal to the element boundary {$V_{ij}$, which will be needed later,} is related to the time component of the space-time normal vector {appearing in \eqref{eqn.intsurf_stdiv}, \eqref{eqn.Ducros_flux} and \eqref{eqn.Rusanov_flux}} by the relation 
\begin{equation}
	V_{ij} = \mathbf{V} \cdot \mathbf{n}_{ij} = - \frac{\tilde{n}^t_{ij}}{\sqrt{ \left(\tilde{n}^x_{ij} \right)^2 + \left(\tilde{n}^y_{ij} \right)^2}}, 
	\label{eqn.normal.meshvelocity} 
\end{equation}
see \cite{Lagrange2D} for more details. 
}

Introducing the notation $\WW{*}$ for the intermediate solution of the conservative variables vector and substituting \eqref{eqn.stflux} into \eqref{eqn.intsurf_stdiv} yields the finite volume scheme

\begin{align}
	|\Cellst_i|\WW{*}_i =|\Cellst_i|\WW{n}_i -\sum_{\Cell_j \in \mathcal{K}_i} \Zst(\WW{n}_i,\WW{n}_j,\etast_{ij}) 
	-\int_{\Gammast_i}\boldsymbol{\mathcal{T}}^{n} \etast_i \,\dt\dS 
	+ \int_{{\Cellst}_{i}^\circ} \left( \mathbf{S}\left( \WW{n},\nabla \WW{n} \right)-\Bst(\WW{n}) \cdot \nablast \WW{n} \right) \,\dt\dbx
%	|\Cellst_i|\WW{*}_i =|\Cellst_i|\WW{n}_i -\sum_{\Cell_j \in \mathcal{K}_i} \Zst(\WW{n}_i,\WW{n}_j,\etast_{ij}) 
%	-\int_{\Gammast_i}\btau^n \etast_i \,\dt\dS 
%	+ \int_{{\Cellst}_{i}^\circ} \left( \mathbf{S}\left( \WW{n},\nabla \WW{n} \right)-\Bst(\WW{n}) \cdot \nablast \WW{n} \right) \,\dt\dbx.
	\label{eqn.intsurf_stdiv_interm}  
\end{align}
{with $\boldsymbol{\mathcal{T}}^{n}=\left( \btau^{n}, \alpha\nabla T^{n}\right)$ for the incompressible Navier-Stokes equations coupled with the temperature transport equation and $\boldsymbol{\mathcal{T}}^{n}=\left( \btau^{n}, \mathbf{0}\right)$ for weakly compressible flows.}

To complete the discretization of the former system, we still need to specify the approximation of the viscous and source terms. 
Regarding diffusion terms, {$\boldsymbol{\mathcal{T}}$,} we write the integral over the space-time surface $\Gammast_i$ in~\eqref{eqn.intsurf_stdiv_interm} as a sum of integrals on the space-time faces, that is, for the incompressible Navier-Stokes momentum equations we get
\begin{align*}
\int_{\Gammast_i}\btau^n \etast_i \,\dt\dS = 
\sum_{\Cell_j \in \mathcal{K}_i} \int_{\Gammast_{ij}}\btau^n \etast_{ij} \,\dt\dS= \sum_{\Cell_j \in \mathcal{K}_i} \int_{\Gammast_{ij}}\mu\left( \nabla \Vel^n + \nabla \left(\Vel^n\right)^T \right)\etast_{ij} \,\dt\dS, 
	%\label{eqn.viscous_inc}  
\end{align*}
while for weakly compressible flows we have
\begin{align*}
\int_{\Gammast_i}\btau^n \etast_i \,\dt\dS = 
\sum_{\Cell_j \in \mathcal{K}_i} \int_{\Gammast_{ij}}\btau^n \etast_{ij} \,\dt\dS= \sum_{\Cell_j \in \mathcal{K}_i} \int_{\Gammast_{ij}}\mu\left( \nabla \Vel^n + \nabla \left(\Vel^n\right)^T -\frac{2}{3} \left( \dive \Vel^n  \right) \mathbf{I} \right)\etast_{ij} \,\dt\dS.
	%\label{eqn.viscous_cns}  
\end{align*}
Then, it is necessary to compute the normal projection of the gradients and the divergence for each surface $\Gammast_{ij}$. Those derivatives are approximated by a classical Galerkin finite element approach that makes use of classical {Crouzeix}-Raviart (CR) elements on the primal grid. {More precisely: The basis functions of CR elements are constructed with the FE nodes located at the \textit{barycenters} of the edges of the primal mesh, which are the centers of the staggered edge-based \textit{dual mesh} used in our hybrid FV/FE method and which is therefore exactly the location where the discrete velocity field is defined in our method. Since the CR basis functions are well-known, the discrete  derivatives of the velocity field are immediately available within each primal element. Finally, the value of the gradient at each $\Gammast_{ij}$ is taken to be the one of the primal element $T_k$ containing the related edge, $\Gamma_{ij}$, see also \cite{BFTVC17,BBDFSVC20,HybridNNT}.} 
{An analogous procedure is followed to discretize the diffusion term in the temperature transport-diffusion equation \eqref{eqn.temperature}, i.e.,
\begin{equation*}
	\int_{\Gammast_i}\alpha \nabla T^{n} \cdot \etast_i \,\dt\dS = \sum_{\Cell_j \in \mathcal{K}_i} \int_{\Gammast_{ij}}\alpha \nabla T^{n} \cdot \etast_{ij} \,\dt\dS,
\end{equation*}
where the temperature gradient is computed at each boundary at the aid of Crouzeix-Raviart elements.}

On the other hand, the source term $\mathbf{S}$ can be split as \mbox{$\mathbf{S}(\WW{n},\nabla \WW{n})= \mathbf{S}_1(\WW{n}) + \mathbf{S}_2( \WW{n},\nabla \WW{n})$,} where $\mathbf{S}_1$ is an algebraic source that may depend on the conservative variable vector, and $\mathbf{S}_2$ depends also on the gradient of the conservative vector or on a post-processed variable gradient. For incompressible flows, 
$$\mathbf{S}_1\left( \WW{n} \right)= \begin{pmatrix} \rho\g-\rho\beta \left( \T^{n}-\T_{\mathrm{ref}}\right) \g \\ 0 \end{pmatrix}  \qquad \mathrm{and} \qquad  \mathbf{S}_2( \WW{n},\nabla \WW{n})=0,$$
so the source term $\mathbf{S}_1$ can be directly integrated on each spatial-time control volume $\Cellst_i$, considering the cell-averaged temperature computed at the previous time step. For weakly compressible flows, the source terms are given by 
$$\mathbf{S}_1\left( \WW{n} \right)= \begin{pmatrix} 0\\\rho^n \g\\0\end{pmatrix} \qquad \mathrm{and} \qquad \mathbf{S}_2\left( \WW{n},\nabla \WW{n} \right)= \begin{pmatrix} 0\\0\\-\left(\gamma-1\right)\dive \mathbf{q}^{n}\end{pmatrix}.$$
To compute the contribution of $\mathbf{S}_2$, we first need to approximate the heat flux $\dive \mathbf{q}$ using the Galerkin approach already employed to calculate viscous terms after having computed the temperature at the dual elements from the density and pressure obtained at the previous time step. 

As defined above, \eqref{eqn.intsurf_stdiv_interm} is only of first order. To increase the accuracy of the proposed methodology in space and time, we employ the second order accurate local ADER-ENO approach, {which was introduced and analyzed in \cite{BFTVC17}. Thanks to the employed staggered mesh, the spatial gradients can again be easily computed at the aid of the discrete derivatives of the Crouzeix-Raviart finite elements. In time we make use of the classical Cauchy-Kovalewskaya procedure that allows to compute a discrete time derivative in terms of the known spatial gradients. With this information we can then compute all terms in \eqref{eqn.intsurf_stdiv_interm} by the boundary-extrapolated values of the reconstruction, evolved up to the half time level, see \cite{TT02,TT05,Toro:2006a,Toro,BFTVC17,BBDFSVC20,BRMVCD21,BD22,BCDGP20} for further details on ADER schemes and their use within hybrid FV/FE methods}.
 
Finally, let us note that at each time step the use of closed space-time control volumes guarantees that the geometric conservation law (GCL) is verified, see \cite{Lagrange3D}, because
\begin{equation}
	\int_{\Gammast_i}\nst \dS = \sum_{\Cell_j \in \mathcal{K}_i}  \etast_{ij} = 0.
	\label{eqn.GCL2}
\end{equation}
As such, the numerical method is exactly \textit{free-stream preserving} by construction since all integrals of the space-time normal vectors are calculated exactly. %In all numerical test problems shown later, we explicitly check that the above GCL always holds up to machine precision.

\subsection{Mesh motion and choice of the mesh velocity}\label{sec:mesh_motion}
Since we are considering moving meshes, as the problem evolves in time, the elements of the primal mesh are deformed. Two different types of mesh motion are considered in this work: the primal elements are moved with a smoothed version of the local fluid velocity, or with a velocity that is prescribed from the boundaries and where the mesh velocity inside the domain is obtained by solving a Laplace equation with the boundary velocity imposed as Dirichlet boundary conditions. Then, if $\x_i^n$ is the coordinate of a finite element vertex, the coordinate of the vertex at the next time step is computed via an explicit Euler scheme as 
\begin{equation*} 
\x_i^{n+1} = \x_i^n + \Delta t\, \V_i^n,
\end{equation*} 
where $\V_i^n$ is the local mesh velocity in vertex $\x_i^n$ at time $t^n$. After moving the primal grid it is necessary to recompute the dual mesh as well as all mesh data related to the physical coordinates of the cells. As a consequence of the mesh motion, two different spatial grids are present at each time step. The first mesh, coming from the previous time step, is employed to define the space-time control volumes needed within the finite volume scheme. Meanwhile, the pressure system will only see in the newly computed spatial mesh. 
In what follows, we provide further details on the two types of mesh motion.

\subsubsection{Mesh velocity imposed by the velocity on the domain boundaries}
This is the easiest way of imposing a local mesh velocity in the context of ALE schemes and particularly suitable for future fluid structure interaction problems. The local mesh velocity is simply determined by the solution of the Laplace equation 
\begin{equation}
	  \nabla^2 \V^n = 0, \qquad \forall \x \in \Omega^n, 
	  \label{eqn.mesh.laplace} 
\end{equation} 	
with the boundary velocity $\mV^n_{BC}$ prescribed as a Dirichlet boundary condition on $\partial \Omega^n$, 
\begin{equation}
	\V^n(\x) = \mV^n_{BC} \qquad \forall \x \in \partial \Omega^n, 
	  \label{eqn.mesh.laplace.bc}
\end{equation} 	
and $\nabla^2 = \nabla \cdot \nabla$ the usual Laplace operator.
The problem \eqref{eqn.mesh.laplace}-\eqref{eqn.mesh.laplace.bc} is solved at the aid of a classical continuous $\mathbb{P}_1$ Lagrange finite element method, which is identical to the one used to solve the pressure Poisson equation used in the present hybrid FV/FE method and described in the following section. 

\subsubsection{Mesh velocity imposed by a smoothed local fluid velocity}
Since in Arbitrary-Lagrangian-Eulerian schemes the mesh velocity can, in principle, be chosen arbitrarily, we may also choose it equal to the local fluid velocity. However, as already mentioned before, in the case when the mesh velocity is set to be the local fluid velocity, an excessive distortion of the finite elements can appear after finite times, especially for flows with differential rotation and strong shear, see, e.g. \cite{Lagrange2D,Lagrange3D,GBCKSD19}. In order to avoid it, we can employ a regularization algorithm that helps to smooth the deformation.

In this case we compute the mesh velocity by solving a parabolic equation with the local fluid velocity as initial condition and the diffusion parameter chosen in such a manner as to regularize the mesh sufficiently. In this setting, the local mesh velocity is determined by the solution of the following set of semi-discrete parabolic equations  
\begin{equation}
	\V^n = \V^* + \Delta t \varsigma \nabla^2 \V^n, \qquad \forall \x \in \Omega^n,  
	\label{eqn.mesh.heat} 
\end{equation} 	
with the initial condition given by the local fluid velocity 
\begin{equation*}
	\V^* = \Vel^n 
\end{equation*}
and the local fluid velocity also prescribed as Dirichlet boundary condition on $\partial \Omega^n$, 
\begin{equation}
	\V^n(\x) = \Vel^n(\x) \qquad \forall \x \in \partial \Omega^n. 
	\label{eqn.mesh.heat.bc}
\end{equation} 	
System \eqref{eqn.mesh.heat}-\eqref{eqn.mesh.heat.bc} is again solved employing continuous $\mathbb{P}_1$ Lagrange FE and the regularization parameter is chosen according to each test problem. For $\varsigma = 0$ we retrieve 
$\V^n = \Vel^n$ and for $\varsigma \to \infty $ we obtain a Laplace-type smoothing of the mesh velocity.

\subsection{Discretization of the pressure subsystem via continuous finite elements for incompressible flows}\label{sec:numdisc.press}

Once the new mesh has been computed and the solution of the convective and diffusive subsystem is obtained, the pressure subsystem needs to be solved to update the final velocity and pressure variables. 
Let us recall that at this stage all variables live exclusively in the already moved mesh and we do not need to take into account any space-time control volume for the solution of the pressure subsystem. Therefore the methodology to be followed coincides with the projection stage of an Eulerian scheme, \cite{HybridNNT,BBDFSVC20}.  

Semi-discretization of the pressure subsystem \eqref{eqn.inc.velo.sub}  in time, keeping space still continuous, yields 
\begin{gather}
	\dive \Vel^{n+1} = 0, \label{eqn.div.sub}  \\
	\frac{\rho  \Vel^{n+1} - \rho \Vel^*}{\Delta t} + \gra \Press^{n+1} 	= 0, \label{eqn.press.sub}  
\end{gather}
where $\rho \Vel^*$ represents the momentum obtained solving the convective and viscous subsystem via the ALE finite volume scheme that has been discussed in Section \ref{sec:numdisc.ALEFV}. 
Once $\WW{*}$ has been computed, its value can be interpolated from the dual cells as the weighted average of the values at the subelements contained at each primal element.

% Choose betwen this two options:
%Option A
Inserting \eqref{eqn.press.sub} into \eqref{eqn.div.sub} multiplied by the constant density $\rho$ leads to the following pressure Poisson equation that needs to be solved at each time step, as usual in projection methods: 
\begin{gather}
	\nabla^2 \Press^{n+1} = \frac{1}{\Delta t } \dive \rho \Vel^{*} .   \label{eqn.press.poisson}  
\end{gather}
Integrating \eqref{eqn.press.poisson} on the computational domain $\Omega$ multiplied by a test function $z\in V_{0}$, and applying Green's formula, we obtain the weak problem:
	\begin{align}
		&\textit{Find $ \Press^{n+1} \in V_0$ such that} \notag\\
		&\Delta t \int \limits_{\Omega} \grae \Press^{n+1} \cdot \grae z \dV = \int \limits_{\Omega} \rho \Vel^{*} \cdot \grae z \dV -  \int \limits_{\Gamma_{D}} \rho \Vel^{n+1} \cdot \mathbf{n}\, z \dS,\qquad \forall z\in V_0=\left\lbrace z\in H^{1}(\Omega): \int_{\Omega} z \dV = 0 \right\rbrace,  \label{eqn.weak.incns}  
	\end{align}
%	\textit{for all $z\in V_0=\left\lbrace z\in H^{1}(\Omega): \int_{\Omega} z dV = 0 \right\rbrace$,}
\noindent where we have taken into account that Dirichlet boundary conditions for the velocity unknown could be defined on $\Gamma_{D}\subset \partial \Omega$.
%
% Option B
%Integrating  \eqref{eqn.press.sub} on the computational domain $\Omega$  multiplied by the gradient of a test function  $z\in V_{0}$, $V_0=\left\lbrace z\in H^{1}(\Omega): \int_{\Omega} z dV = 0 \right\rbrace$, applying Green's formula,
%and substituting into the integral form of \eqref{eqn.div.sub} previously multiplied by the constant density $\rho$ and a test function, we obtain the weak problem
%\begin{weakproblem}
%	Find $ \press^{n+1} \in V_0$ such that
%	\begin{equation}
%			\Delta t \int \limits_{\Omega} \grae \press^{n+1} \cdot \grae z dV = \int \limits_{\Omega} \rho \vel^{*} \cdot \grae z dV -  \int \limits_{\Gamma} \rho \vel^{n+1} \cdot \eta z dS   \label{eqn.weak.incns}  
%	\end{equation}
%	for all $z\in V_0$, $\Gamma\subset \partial \Omega$.
%\end{weakproblem}
%
%
This weak problem is discretized using classical $\mathbb{P}_1$ Lagrange finite elements and the resulting linear system is solved using a matrix-free conjugate gradient method.

Once $\Press^{n+1}$ is computed at each primal vertex $V_{\ell}$, we interpolate it on the dual grid as the average of its value at the vertex of each dual cell. Moreover, the gradients of $\Press^{n+1}$ are computed at each dual element as a weighted average of the gradients obtained on the primal elements using the FE framework and the final momentum is obtained as
\begin{equation*}
	\WW{n+1}_{\vel} = \WW{*}_{\vel} + \Delta t\;  \nabla \Press^{n+1}.
\end{equation*} 

An alternative approach to the former scheme consists in solving for the difference between the pressure in the new time step and in the previous one, $\delta \Press^{n+1}=\Press^{n+1}-\Press^n$, instead of the pressure itself. This approach, generally known as pressure correction formulation, requires for the joint semi-discretization of the Navier-Stokes equations before performing the splitting. As a result, the gradient of the pressure at time $t^n$ is kept in the momentum equation of the convective-diffusive subsystem, see, e.g. \cite{BFTVC17}. Therefore, to compute $\WW{**}_{\vel}=\rho  \Vel^{**} :=\WW{*}_{\vel}+\int\limits_{{\Cellst}_{i}}\nabla\Press^{n}\dt\dbx$ it suffices to add the term $$\displaystyle -\int\limits_{\Cellst_i} \nabla \Press^n \dt\dbx$$ at the right hand side of \eqref{eqn.intsurf_stdiv_interm} and approximate it by transforming the integral over the control volume into a boundary integral of the normal projection of the pressure.
On the other hand, the final weak problem for the pressure, analogous to \eqref{eqn.weak.incns}, reads

\textit{Find $\delta \Press^{n+1} \in V_0$ such that}
\begin{equation}
\Delta t \int \limits_{\Omega} \grae \delta \Press^{n+1} \cdot \grae z \dV = \int \limits_{\Omega} \rho \Vel^{**} \cdot \grae z \dV -  \int \limits_{\Gamma_{D}} \rho \Vel^{n+1} \cdot \mathbf{n}\, z \dS,\qquad \forall z\in V_0. \label{eqn.weak.incnspc}  
\end{equation}

\noindent Once its solution is computed, the pressure and momentum are obtained as 
$$	\Press^{n+1} = \Press^{n}  + \delta \Press^{n+1}, \qquad \WW{n+1}_{\vel} = \WW{**}_{\vel} + \Delta t \; \nabla\delta \Press^{n+1}.$$

{In general, the main advantage of the pressure correction approach is that it performs better when combining an explicit treatment for the convective terms with an implicit treatment of the viscous terms, as presented in \cite{HybridNNT} within the Eulerian framework. It furthermore corresponds to the original Eulerian formulation of the hybrid FV/FE scheme presented in \cite{BFTVC17}. However, in the framework of this work, where both convective and viscous terms are discretized explicitly, no significant difference is observed between using the pressure correction formulation, or not. 
%However, including this term implies that it has to be calculated, something that in the methodology without pressure correction is not necessary, avoiding the computational cost involved.
}

\subsection{Semi-discrete kinetic energy preservation and energy stability of the ALE FV scheme}

In this section, we consider the inviscid case without gravity by setting $\mu = 0$, $\beta = 0$ and $\mathbf{g}=0$. 
We further assume that the density $\rho$ is a global constant in space and time. 

\begin{theorem} 
\label{thm.ale.ducros}
Assuming that the discrete velocity field is divergence free in the sense 
\begin{equation}
	\sum \limits_{\Cell_j \in \mathcal{K}_i} |\Gamma_{ij}| u_{ij} = 0, \qquad 
	\Un_{ij} = \frac{1}{|\Gamma_{ij}|} 
	\int \limits_{\Gamma_{ij}} 
	\halb \left( \vel_i + \vel_j \right) \cdot \mathbf{n}_{ij} \, \dS, \qquad
	 |\Gamma_{ij}| = \int \limits_{\Gamma_{ij}} \dS, 	   	
	\label{eqn.divfree} 
\end{equation}
with $\mathbf{n}_{ij} = -\mathbf{n}_{ji}$ the unit normal vector pointing from cell $\Cell_i$ to cell $\Cell_j$ and assuming that the semi-discrete geometric conservation law (GCL) with edge-averaged normal mesh velocity $\mVn_{ij}$  
\begin{equation}
	\frac{\partial  |\Cell_i(t)| }{\partial t} - \sum \limits_{\Cell_j \in \mathcal{K}_i} 
	|\Gamma_{ij}| \mVn_{ij} = 0, 
	\qquad
	\mVn_{ij} = \frac{1}{|\Gamma_{ij}|} \int \limits_{\Gamma_{ij}} \mV \cdot \mathbf{n}_{ij}  \, \dS,  
	\qquad
|\Cell_{i}| = \int \limits_{\Cell_{i}(t)} \dbx 	
	\label{eqn.gcl2} 
\end{equation}
is satisfied, then the \textit{semi-discrete}  ALE FV scheme with the Ducros flux, \cite{Ducros1999,Yee1999,Ducros2000,Moin2009,Pirozzoli2010},
\begin{equation}
	 \frac{\partial  |\Cell_i(t)| \rho \vel_i  }{\partial t} + \sum \limits_{\Cell_j \in \mathcal{K}_i} 
	 |\Gamma_{ij}| \left( \Un_{ij} - \mVn_{ij} \right) \halb \rho \left( \vel_{i} + \vel_{j} \right) = 0  
	 \label{eqn.ducros2} 
\end{equation}
verifies a discrete conservation law for the kinetic energy density in the sense that 
\begin{equation}
	 	\frac{\partial  |\Cell_i(t)| \halb \rho \vel_i^2  }{\partial t} + \sum \limits_{\Cell_j \in \mathcal{K}_i} 
	 	|\Gamma_{ij}| \left( \Un_{ij} - \mVn_{ij} \right) \halb \rho \vel_{i} \cdot \vel_{j} = 0,
	 	\label{eqn.kinen}   
\end{equation}
where $f^{\rho k}_{ij} = \left( \Un_{ij} - \mVn_{ij} \right) \halb \rho \vel_{i} \cdot \vel_{j}$ is a consistent and symmetric approximation of the ALE numerical flux of the kinetic energy. 
\end{theorem}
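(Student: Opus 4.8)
The plan is to derive the kinetic-energy identity \eqref{eqn.kinen} from the momentum scheme \eqref{eqn.ducros2} by the discrete analogue of taking the scalar product of the momentum equation with the velocity. First I would introduce the cell momentum $\mathbf{m}_i(t) := |\Cell_i(t)|\,\rho\,\vel_i(t)$; since $\rho$ is a global constant, the cell kinetic energy equals $|\Cell_i(t)|\,\halb\rho\vel_i^2$, and eliminating $\partial_t\vel_i$ from the relation $\mathbf{m}_i = \rho\,|\Cell_i(t)|\,\vel_i$ in favour of $\partial_t\mathbf{m}_i$ and $\partial_t|\Cell_i(t)|$ yields the exact product-rule identity
\[
  \frac{\partial}{\partial t}\!\left(|\Cell_i(t)|\,\halb\rho\vel_i^2\right) = \vel_i\cdot\frac{\partial \mathbf{m}_i}{\partial t} - \halb\rho\,\vel_i^2\,\frac{\partial |\Cell_i(t)|}{\partial t}.
\]
This reduces the claim to substituting the right-hand sides furnished by the momentum scheme \eqref{eqn.ducros2} and by the semi-discrete GCL \eqref{eqn.gcl2}.

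Next I would insert $\partial_t\mathbf{m}_i = -\sum_{\Cell_j\in\mathcal{K}_i}|\Gamma_{ij}|\,(\Un_{ij}-\mVn_{ij})\,\halb\rho(\vel_i+\vel_j)$ from \eqref{eqn.ducros2} and $\partial_t|\Cell_i(t)| = \sum_{\Cell_j\in\mathcal{K}_i}|\Gamma_{ij}|\,\mVn_{ij}$ from \eqref{eqn.gcl2}. Dotting the flux with $\vel_i$ and using $\vel_i\cdot(\vel_i+\vel_j) = \vel_i^2 + \vel_i\cdot\vel_j$ splits the convective sum into the desired interface term $\sum_{\Cell_j\in\mathcal{K}_i}|\Gamma_{ij}|\,(\Un_{ij}-\mVn_{ij})\,\halb\rho\,\vel_i\cdot\vel_j$ and a spurious term $\sum_{\Cell_j\in\mathcal{K}_i}|\Gamma_{ij}|\,(\Un_{ij}-\mVn_{ij})\,\halb\rho\,\vel_i^2$. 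The crucial algebraic cancellation is that this spurious term combines with the GCL contribution $-\halb\rho\,\vel_i^2\sum_{\Cell_j\in\mathcal{K}_i}|\Gamma_{ij}|\,\mVn_{ij}$: the $\mVn_{ij}$ pieces cancel identically, leaving only $-\halb\rho\,\vel_i^2\sum_{\Cell_j\in\mathcal{K}_i}|\Gamma_{ij}|\,\Un_{ij}$.

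Finally I would invoke the discrete divergence-free assumption \eqref{eqn.divfree}, $\sum_{\Cell_j\in\mathcal{K}_i}|\Gamma_{ij}|\,\Un_{ij}=0$, which annihilates this last term and produces \eqref{eqn.kinen} verbatim. Consistency of $f^{\rho k}_{ij} = (\Un_{ij}-\mVn_{ij})\,\halb\rho\,\vel_i\cdot\vel_j$ with the exact ALE kinetic-energy flux $(\vel-\mV)\cdot\mathbf{n}\,\halb\rho\vel^2$ is read off by setting $\vel_i=\vel_j=\vel$ and $\Un_{ij}\to\vel\cdot\mathbf{n}_{ij}$, $\mVn_{ij}\to\mV\cdot\mathbf{n}_{ij}$; the ``symmetry'' amounts to noting that $\halb\rho\,\vel_i\cdot\vel_j$ is a symmetric two-point function of the states $\vel_i,\vel_j$, while $\Un_{ij}-\mVn_{ij}$ is the antisymmetric geometric factor (since $\mathbf{n}_{ij}=-\mathbf{n}_{ji}$ gives $\Un_{ji}=-\Un_{ij}$ and $\mVn_{ji}=-\mVn_{ij}$), so that $f^{\rho k}_{ij}=-f^{\rho k}_{ji}$ and the interface sums telescope, giving global semi-discrete kinetic-energy conservation.

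I do not anticipate a genuine obstacle; the only point requiring care is bookkeeping consistency of the averages: the GCL \eqref{eqn.gcl2} must be used with precisely the same edge-averaged normal mesh velocity $\mVn_{ij}$ that appears in \eqref{eqn.ducros2}, and the divergence-free relation \eqref{eqn.divfree} with precisely the same edge-averaged normal velocity $\Un_{ij}$, so that both cancellations are exact rather than merely consistent. It is worth emphasising that the arithmetic-mean structure of the Ducros flux --- the factor $\halb(\vel_i+\vel_j)$ multiplying the common mass-flux term $|\Gamma_{ij}|(\Un_{ij}-\mVn_{ij})$, cf.\ \eqref{eqn.Ducros_flux} --- is exactly what makes the split $\vel_i\cdot(\vel_i+\vel_j)=\vel_i^2+\vel_i\cdot\vel_j$ produce a symmetric interface term; an upwind choice of interface velocity would not close in this way. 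One may additionally observe that \eqref{eqn.ducros2} is just the semi-discrete limit of the fully discrete space-time scheme \eqref{eqn.intsurf_stdiv_interm} with the Ducros flux \eqref{eqn.Ducros_flux}, $\mu=0$, $\mathbf{S}=0$ and constant $\rho$.
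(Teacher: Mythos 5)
Your proposal is correct and follows essentially the same route as the paper's proof: multiply the momentum scheme by $\vel_i$, split $\vel_i\cdot\halb(\vel_i+\vel_j)$ into $\halb\vel_i^2$ plus $\halb\vel_i\cdot\vel_j$, and cancel the $\halb\rho\vel_i^2$ contribution using the discrete divergence-free condition together with the semi-discrete GCL, recognizing the remainder as the exact time derivative of $|\Cell_i(t)|\halb\rho\vel_i^2$ via the product rule. The only difference is cosmetic bookkeeping (you pre-package the product rule as an identity for $\partial_t(|\Cell_i|\halb\rho\vel_i^2)$ before substituting, whereas the paper substitutes first and then collects), and your closing remarks on consistency and the antisymmetry $f^{\rho k}_{ij}=-f^{\rho k}_{ji}$ match the paper's subsequent discussion of global conservation.
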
   
\begin{proof}
Multiplication of the discrete momentum conservation law \eqref{eqn.ducros2} with $\vel_i$ yields 
\begin{equation*}
	\vel_i \cdot \frac{\partial  |\Cell_i(t)| \rho \vel_i  }{\partial t} + \vel_i \cdot \sum \limits_{\Cell_j \in \mathcal{K}_i} 
	|\Gamma_{ij}| \left( \Un_{ij} - \mVn_{ij} \right) \halb \rho \left( \vel_{i} + \vel_{j} \right) = 0.  
\end{equation*}
Since $\vel_i$ does not depend on the index $j$ we can rewrite the above equation as 
\begin{equation*}
	\vel_i \cdot \frac{\partial  |\Cell_i(t)| \rho \vel_i  }{\partial t} + 
	\halb \rho \vel_{i}^2 
	\sum \limits_{\Cell_j \in \mathcal{K}_i} 
	|\Gamma_{ij}| \left( \Un_{ij} - \mVn_{ij} \right)   +
	 \sum \limits_{\Cell_j \in \mathcal{K}_i} 
	|\Gamma_{ij}| \left( \Un_{ij} - \mVn_{ij} \right) \halb \rho \vel_{i} \cdot \vel_{j} 
	= 0,  
\end{equation*}
which after using the product rule on the first term reduces to 
\begin{equation}
	\rho \vel_i \cdot  \vel_i \frac{\partial  |\Cell_i(t)|   }{\partial t} + 
	\vel_i \cdot |\Cell_i(t)| \frac{\partial   \rho \vel_i  }{\partial t}
	- 
	\halb \rho \vel_{i}^2 
	\frac{\partial  |\Cell_i(t)|   }{\partial t}    +
	\sum \limits_{\Cell_j \in \mathcal{K}_i} 
	|\Gamma_{ij}| \left( \Un_{ij} - \mVn_{ij} \right) \halb \rho \vel_{i} \cdot \vel_{j} 
	= 0,  
	\label{eqn.aux1}
\end{equation}
due to the incompressibility condition \eqref{eqn.divfree} and the GCL \eqref{eqn.gcl2}. Since 
\begin{equation*}
	\frac{\partial}{\partial t} \left( |\Cell_i(t)| \halb \rho \vel_i^2 \right)  = 
	\halb \rho \vel_i^2 \, 
	\frac{\partial  |\Cell_i(t)| }{\partial t}   + 
	|\Cell_i(t)| \frac{\partial}{\partial t} \left(  \halb \rho \vel_i^2 \right) = 
	\halb \rho \vel_i^2 \, 
	\frac{\partial  |\Cell_i(t)| }{\partial t}   + 
	\vel_i \cdot 
	|\Cell_i(t)| \frac{\partial   \rho \vel_i  }{\partial t}.
\end{equation*}
Equation~\eqref{eqn.aux1} finally becomes 
\begin{equation}
	\frac{\partial}{\partial t} \left( |\Cell_i(t)| \halb \rho \vel_i^2 \right)    +
	\sum \limits_{\Cell_j \in \mathcal{K}_i} 
	|\Gamma_{ij}| \left( \Un_{ij} - \mVn_{ij} \right) \halb \rho \vel_{i} \cdot \vel_{j} 
	= 0,  
	\label{eqn.aux2}
\end{equation}   
which is the sought discrete kinetic energy conservation law \eqref{eqn.kinen} and which completes the proof.  
\end{proof}

A direct consequence of Theorem \ref{thm.ale.ducros} is that for vanishing boundary fluxes of the kinetic energy the scheme is kinetic energy preserving. Indeed, summation over all elements yields 
\begin{equation*}
	\int \limits_{\Cell_i} \frac{\partial}{\partial t} \left( \halb \rho \vel^2 \right) \dbx = 
	\sum_i \frac{\partial  |\Cell_i(t)| \halb \rho \vel_i^2  }{\partial t} = - \sum_i \sum \limits_{\Cell_j \in \mathcal{K}_i}  
	|\Gamma_{ij}| \left( \Un_{ij} - \mVn_{ij} \right) \halb \rho \vel_{i} \cdot \vel_{j} = 0,  
\end{equation*}
since the internal fluxes cancel due to the obvious properties $\Un_{ij} = -\Un_{ji}$ and $\mVn_{ij} = -\mVn_{ji}$ and the boundary fluxes of kinetic energy are assumed to vanish. 

In the following we add a Rusanov-type numerical dissipation to the Ducros flux obtaining the scheme 
\begin{equation}
	\frac{\partial  |\Cell_i(t)| \rho \vel_i  }{\partial t} 
	+ \sum \limits_{\Cell_j \in \mathcal{K}_i} 
	|\Gamma_{ij}| \left( \Un_{ij} - \mVn_{ij} \right) \halb \rho \left( \vel_{i} + \vel_{j} \right) = 
	\sum \limits_{\Cell_j \in \mathcal{K}_i} 
	|\Gamma_{ij}| \halb s_{\max} \rho \left( \vel_{j} - \vel_{i} \right),   
	\label{eqn.ducros3} 
\end{equation}
where $s_{\max} \geq 0 $ is an estimate for the maximum local signal velocity. 
We now show that the scheme \eqref{eqn.ducros3} is energy dissipative. To that end, it is sufficient to analyze the new terms on the right hand side of the equation, since the terms on the left hand side immediately lead to \eqref{eqn.aux2}. Multiplication of 	\eqref{eqn.ducros3} with $\vel_i$ yields 
\begin{gather*}
	\frac{\partial}{\partial t} \left( |\Cell_i(t)| \halb \rho \vel_i^2 \right)    +
	\sum \limits_{\Cell_j \in \mathcal{K}_i} 
	|\Gamma_{ij}| \left( \Un_{ij} - \mVn_{ij} \right) \halb \rho \vel_{i} \cdot \vel_{j}  = \sum \limits_{\Cell_j \in \mathcal{K}_i} 
	|\Gamma_{ij}| \halb s_{\max} \rho \left( \vel_{j} - \vel_{i} \right) \cdot \vel_i,  \\
	= \sum \limits_{\Cell_j \in \mathcal{K}_i} 
	|\Gamma_{ij}| \halb s_{\max} \rho \left( \vel_{j} - \vel_{i} \right) \cdot \halb \left( \vel_i + \vel_j \right) +
	\sum \limits_{\Cell_j \in \mathcal{K}_i} 
	|\Gamma_{ij}| \halb s_{\max} \rho \left( \vel_{j} - \vel_{i} \right) \cdot \halb \left( \vel_i - \vel_j \right) \\
	= \sum \limits_{\Cell_j \in \mathcal{K}_i} 
	|\Gamma_{ij}| \halb s_{\max} \halb \rho \left( \vel_{j}^2 - \vel_{i}^2 \right) -
	\sum \limits_{\Cell_j \in \mathcal{K}_i} 
	|\Gamma_{ij}| \halb s_{\max} \halb \rho \left( \vel_{j} - \vel_{i} \right)^2.   	
\end{gather*}
This means the evolution equation of the kinetic energy density is 
\begin{gather*}
	\frac{\partial}{\partial t} \left( |\Cell_i(t)| \halb \rho \vel_i^2 \right)    +
	\sum \limits_{\Cell_j \in \mathcal{K}_i} 
	|\Gamma_{ij}| \left( \Un_{ij} - \mVn_{ij} \right) \halb \rho \vel_{i} \cdot \vel_{j}  - \sum \limits_{\Cell_j \in \mathcal{K}_i} 
	|\Gamma_{ij}| \halb s_{\max} \halb \rho \left( \vel_{j}^2 - \vel_{i}^2 \right) = 
 \\ 
	-
	\sum \limits_{\Cell_j \in \mathcal{K}_i} 
	|\Gamma_{ij}| \halb s_{\max} \halb \rho \left( \vel_{j} - \vel_{i} \right)^2 \leq 0.   	
\end{gather*}
The third term on the left hand side corresponds to the dissipative kinetic energy flux and the sign of the term on the right hand side in the previous equation is known, hence energy stability is now easily proven by simply summing up over all elements. Assuming no fluxes through the boundary the sum over all fluxes on the left hand side cancels and we thus obtain
\begin{equation*}
	\int \limits_{\Cell_i} \frac{\partial}{\partial t} \left( \halb \rho \vel^2 \right) \dbx = 
	\sum_i \frac{\partial  |\Cell_i(t)| \halb \rho \vel_i^2  }{\partial t} = 
	-
	\sum \limits_{\Cell_j \in \mathcal{K}_i} 
	|\Gamma_{ij}| \halb s_{\max} \halb \rho \left( \vel_{j} - \vel_{i} \right)^2 \leq 0,  
\end{equation*}
which means that the integral over the kinetic energy is non increasing in time for $s_{\max} > 0$.

\subsection{Discretization of the pressure subsystem for weakly compressible flows} \label{sec:weakly}
To conclude the extension of the proposed ALE hybrid FV/FE methodology to deal with weakly compressible flows, we proceed like for the incompressible case and discretize the pressure subsystem \eqref{eqn.cns.press.sub} using classical finite elements. Performing the semi-discretization in time of \eqref{eqn.cns.press.sub} one gets
\begin{gather}
	\frac{\rho  \Vel^{n+1} - \rho \Vel^*}{\Delta t}  + \grae \Press^{n+1} = 0, \label{eqn.weak.momentum.p.sdt}  \\
	\frac{\Press^{n+1} - \Press^*}{\Delta t}   - c^2 \Vel^{n}\cdot\grae  \rho^{n} + c^2 \dive \left(\rho\Vel^{n+1}\right) - \left(\gamma-1\right)  \btau^{n}  : \gra \Vel^{n} = 0. \label{eqn.weak.pressure.p.sdt}  
\end{gather}
Hence, substitution of \eqref{eqn.weak.momentum.p.sdt} into \eqref{eqn.weak.pressure.p.sdt} leads to the pressure equation 
\begin{gather}
	- \Delta t^2\,\, \nabla^2 \Press^{n+1} 
	+ \frac{1}{c^2}\Press^{n+1}   
	=\frac{1}{c^2} \Press^*
	+ \Delta t \, \Vel^{n}\cdot\grae  \rho^{n} 
	- \Delta t  \dive \rho \Vel^* 
	+ \frac{\Delta t}{c^2}\left(\gamma-1\right)  \btau^{n}  : \gra \Vel^{n} .
	\label{eqn.press.poisson.weak} 
\end{gather}
The corresponding weak problem reads
	\begin{align}
		&\textit{Find $\Press^{n+1} \in V_0$ such that}& \notag\\
		&\Delta t^2\int\limits_{\Omega}\grae \Press^{n+1}\cdot \grae  z \dV
		+  \frac{1}{c^2} \int\limits_{\Omega}\Press^{n+1}   z \dV
		=\frac{1}{c^2}\int\limits_{\Omega} \Press^{*}  z \dV 
		+ \Delta t \int\limits_{\Omega}  \Vel^{n}\cdot\grae  \rho^{n} z \dV&
		\notag\\
		&\qquad+ \Delta t \int\limits_{\Omega}  \rho \Vel^* \cdot \grae z \dV
		- \Delta t \int\limits_{\Gamma_{D}}  \rho \Vel^{n+1}\cdot\mathbf{n}\, z \dS
		+\frac{\Delta t}{c^2}  \int\limits_{\Omega}\left(\gamma-1\right)  \btau^{n}  : \gra \Vel^{n} z \dV, \quad &\forall z\in V_0,\; \Gamma_{D}\in\partial\Omega, \label{eqn.press.poisson.weakform} 
	\end{align}

\noindent where we have multiplied \eqref{eqn.press.poisson.weak} by a test function $z\in V_0$, integrated over the computational domain and applied Green's formula.
Like in the incompressible case, this system can be discretized using $\mathbb{P}_{1}$ Lagrange finite elements. 
Let us note that, to define the right hand side of \eqref{eqn.press.poisson.weakform}, we need to interpolate data between the staggered grids. Since the density unknown is computed on the dual grid, we can calculate the gradient of the density using {Crouzeix}-Raviart elements and assume a constant value for the velocity at each primal element, as it would be done in a finite difference interpretation of the data coming from the dual grid. On the other hand, the intermediate pressure, $\Press^{*}$, can be interpolated from the dual grid to get its approximation at each primal element. Hence,  we can define an updated intermediate pressure as
\begin{equation*}
	\Press^{**}  = \Press^{*}  
	+ \Delta t c^2 \Vel^{n}\cdot\grae  \rho^{n}
\end{equation*}
that can be used to compute the first two terms on the right hand side of \eqref{eqn.press.poisson.weakform} on an element-wise manner. Finally, a Galerkin approach is used to compute the gradients involved in the last term of the equation providing also a constant value per primal element that needs to be integrated, see \cite{BBDFSVC20}.

\subsection{Boundary conditions}\label{sec:bc}
An important point that still needs to be addressed is the treatment of boundary conditions. In the numerical results shown in Section \ref{sec:numericalresults}, we will make use of the following types:
\begin{itemize}
	\item Periodic boundary conditions. The periodic primal vertices are merged to compute the pressure solution and the mesh deformation. For the finite volume scheme, we keep the computations on the original dual grid containing boundary-type elements, and we then sum the contributions of each pair of neighbours through a periodic face that corresponds to a new interior cell. 
	
	\item Dirichlet boundary conditions. 
	The given momentum, $\rho \vel_{\mathrm{BC}}$, is weakly imposed in the finite volume stage by computing the convective flux and the non conservative products using the velocity coming from the interior, $\rho\vel_{i}$, and
	\begin{equation*}
		\rho\vel_{j} = 2 \rho \vel_{\mathrm{BC}} - \rho\vel_{i}.
	\end{equation*}
	The exact velocity at the barycenter of the boundary face is also employed to compute the gradients involved in the viscous terms. 
	Moreover, this exact value at the boundary face is set in the boundary integral of the pressure system, i.e. in the term $\int_{\Gamma_{\! D}}\rho\vel^{n+1}\cdot\mathbf{n} z \dS$ of \eqref{eqn.weak.incns}, \eqref{eqn.weak.incnspc} or \eqref{eqn.press.poisson.weakform}. 
	If Dirichlet boundary conditions are also selected for the pressure, its value is imposed at the corresponding vertex in the matrix-vector product of the conjugate gradient algorithm.	
	In the incompressible case, if the Boussinesq model is activated, we can choose between defining weak boundary conditions for the temperature or an adiabatic flux through the boundary.
	
	\item Viscous wall boundary conditions. In a first stage, regarding the finite volume and finite element schemes, this boundary condition is equivalent to setting weak Dirichlet  boundary conditions for the momentum unknown and Neumann boundary conditions for the pressure field. 
	However, once the intermediate velocity has been corrected with the pressure gradient, the velocity at the boundary is imposed to be exactly the one given in the boundary condition. 
	As it has already been mentioned in Section \ref{sec:mesh_motion}, if the mesh velocity is set according to a prescribed velocity on the domain boundaries, we employ it to define the velocity $\mV_{BC}$ in \eqref{eqn.mesh.laplace}-\eqref{eqn.mesh.laplace.bc}. 
	In case the mesh is moved according to the local fluid velocity, we also set the velocity of the boundary elements to match the boundary condition.
	
	\item Inviscid wall boundary conditions. The flux term and the non-conservative jumps are calculated by cancelling the normal component of the momentum using:	
	\begin{equation*}
		\rho\vel_{j} = \rho\vel_{i} - 2 \left( \rho\vel_{i}\cdot \nst\right)  \nst.
	\end{equation*}	
	Meanwhile, the momentum at the boundary face employed to approximate the gradients in the stress tensor are corrected with the given normal velocity, $\rho\vel_{i} = \rho\vel_{i} + \left( \left(\rho\vel_{\mathrm{BC}} -\rho\vel_{i}\right) \cdot \nst\right)  \nst $.
	Neumann boundary conditions are defined for the pressure field. 
	Similarly to what is done for viscous walls, the final momentum at the boundary is modified at the end of each time step to ensure that its normal component is the one prescribed for that boundary. 
	Besides, the mesh velocity at the boundary used to compute the mesh deformation is 
	\begin{equation*}
		\mV_{i} = \Vel_{i} + \left( \left(\mV_{\mathrm{BC}} -\Vel_{i}\right) \cdot \nst\right)  \nst
	\end{equation*}
	with $\mV_{\mathrm{BC}}$ the prescribed velocity at the boundary.
	
	\item Pressure outlet boundary conditions. The given pressure is imposed in the pressure subsystem, and for the momentum we simply set 
	$\vel_i^* = \vel_i^n$ for the boundary control volume $C_i$. %and Neumann boundary conditions are considered for the momentum. 
	If a free surface condition is selected, the velocity is left completely free at the boundary allowing its movement. In this case, the velocity imposed at the boundary nodes to compute the mesh at a new time is the one obtained at the previous time step. 
	For fixed boundaries, the velocity of the mesh is imposed to match the one given by the boundary condition, i.e. zero for non-moving boundaries and the boundary velocity for the moving ones.	
\end{itemize}

% % % % % % % % % % % % % % % % % % % % % % % % % % % % % %
% % % % % % % % % % % % % % % % % % % % % % % % % % % % % %
%              Numerical Results
% % % % % % % % % % % % % % % % % % % % % % % % % % % % % %
% % % % % % % % % % % % % % % % % % % % % % % % % % % % % %
\section{Numerical results}\label{sec:numericalresults}
We now validate the proposed numerical method by using several classical benchmarks with known analytical solution or available reference data. Unless stated the contrary, all test cases have been run using the second order approach with the Ducros flux function for the convective subsystem associated to the incompressible model and the Rusanov flux for the weakly compressible case. Besides, the time step has been dynamically determined according to the CFL condition
\begin{equation*}
	\Dt = \min_{C_{i}} \left\lbrace \Dt_{i}\right\rbrace, \qquad \Dt_{i} = \textnormal{CFL} \frac{r_{i}^2}{( \left|\zeta\right|_{\max} + c_\alpha) r_{i} +\max\left\lbrace 2\left|\nu\right|_{\max},\frac{{c_p}}{\kappa\rho}  \right\rbrace },
	%\label{eqn.cfl.conditionNScomp}
\end{equation*}
with $\left|\zeta\right|_{\max}$ and $\left|\nu\right|_{\max}$ the maximum absolute eigenvalues due to convection and diffusion respectively, $r_i$ the incircle diameter of each dual element and CFL$=0.5$. The GCL law~\eqref{eqn.GCL2} has been numerically verified up to machine precision for all test cases at each time step.

\subsection{Convergence study} \label{sec:convergence}
The accuracy of the scheme is analysed using a stationary incompressible vortex, in the domain \mbox{$\Omega=\left[0,10\right]^2$}, defined as
\begin{gather*}
	\rho(\x,t) 	= 1,
	\quad u_1 	= -r e^{-\halb(r^2-1)}\sin(\phi), 
	\quad u_2 	= r e^{-\halb(r^2-1)} \cos(\phi),\\
	\quad \press = -\halb e^{-(r^2-1)},\quad 
	\phi     	= \mathrm{atan}(y-5,x-5),
\end{gather*}
with $r$ the distance to the center of the computational domain, $r = \sqrt{(x-5)^2+(y-5)^2 }$. We run the ALE hybrid FV/FE method on a set of refined grids, see Table~\ref{tab.conv_mesh}, considering Dirichlet boundary conditions for all variables. The mesh is moving with the local fluid velocity, setting the smoothing parameter to  $\varsigma=0$. Let us note that, instead of using the CFL condition, the time step is fixed for each simulation, according to the initial mesh size. The obtained $L^{2}$ errors at time $t=0.1$ are reported in Table~\ref{tab.conv_errors}. We observe that the expected second order of accuracy is attained for both the pressure and velocity unknowns. 
The mesh deformation of grid $M_2$ at different time instants is depicted in Figure~\ref{fig:mesh_movementM2}.

\begin{table}[th]
		\caption{Main features of the primal grids employed to obtain the convergence table for the incompressible vortex and corresponding time steps. } \label{tab.conv_mesh}
	\renewcommand{\arraystretch}{1.2}
	\begin{center}		
		\begin{tabular}{ccccc}
			\hline 
			Mesh & Elements & Vertices & Dual elements & $\Dt$\\\hline
			%$M_0$ & $128 $ & $81 $ & $208 $  & $2.5\cdot 10^{-2} $\\ %8
			$M_1$ & $512 $ & $289 $ & $800 $ & $1.25\cdot 10^{-2} $\\ %16
			$M_2$ & $2048 $ & $1089 $ & $3136 $ & $6.125\cdot 10^{-3} $\\ %32
			$M_3$ & $8192 $ & $4225 $ & $12416 $ & $3.125\cdot 10^{-3} $\\ %64
			$M_4$ & $32768 $ & $16641 $ & $49408 $ & $1.5625\cdot 10^{-3} $\\ %128
			$M_5$ & $131072 $ & $66049 $ & $197120 $ & $7.8125\cdot 10^{-4} $\\ %256
			$M_6$ & $524288 $ & $263169 $ & $787456 $ & $3.90625\cdot 10^{-4} $\\ %512
			$M_7$ & $2097152$ & $1050625 $ & $3147776 $ & $1.953125\cdot 10^{-4} $\\ %1024
			%$M_8$ & $8388608$ & $4198401 $ & $12587008 $ & $9.765625\cdot 10^{-5}$\\ %2048
			\hline 
		\end{tabular}
	\end{center}
\end{table}
%Simulations have been perform on 64 cores of an AMD Ryzen\texttrademark\,Threadripper\texttrademark\,3990X workstation.

\begin{table}[ht]
		\caption{Convergence study in $L^{2}$ error norm for the incompressible vortex at $t=0.1$.} \label{tab.conv_errors} 	
	\renewcommand{\arraystretch}{1.2}
	\begin{center}
		\begin{tabular}{ccccc}
			\hline 
			Mesh &$L^{2}_{\Omega}\left(\press\right)$ & $\mathcal{O}\left(\press\right)$& $L^{2}_{\Omega}\left(\vel\right)$                   
			& $\mathcal{O}\left(\vel\right)$ \\ \hline
			$M_1$ & $8.92\cdot 10^{-2}$ &        & $9.67\cdot 10^{-2}$ & \\
			$M_2$ & $3.00\cdot 10^{-2}$ & $1.57$ & $1.75\cdot 10^{-2}$ & $2.46$ \\
			$M_3$ & $7.76\cdot 10^{-3}$ & $1.95$ & $3.71\cdot 10^{-3}$ & $2.24$ \\
			$M_4$ & $2.12\cdot 10^{-3}$ & $1.87$ & $7.93\cdot 10^{-4}$ & $2.23$ \\
			$M_5$ & $5.66\cdot 10^{-4}$ & $1.90$ & $1.81\cdot 10^{-4}$ & $2.13$ \\
			$M_6$ & $1.46\cdot 10^{-4}$ & $1.95$ & $4.36\cdot 10^{-5}$ & $2.05$ \\
			$M_7$ & $3.67\cdot 10^{-5}$ & $1.99$ & $1.08\cdot 10^{-5}$ & $2.01$ \\
			\hline 
		\end{tabular}
	\end{center}
\end{table}

\begin{figure}[h!]
	\begin{center}
	\includegraphics[width=0.4\linewidth]{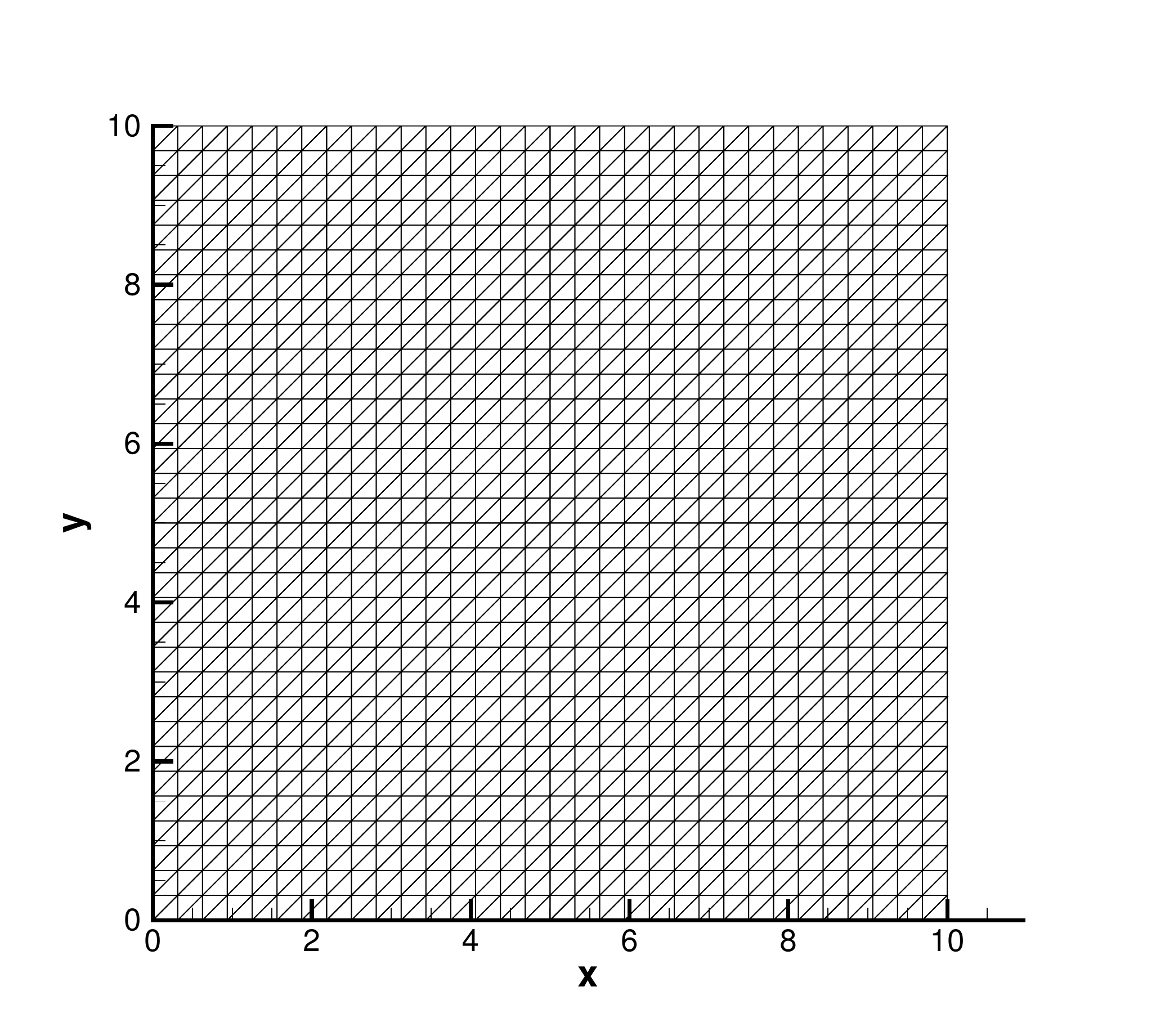}
	\includegraphics[width=0.4\linewidth]{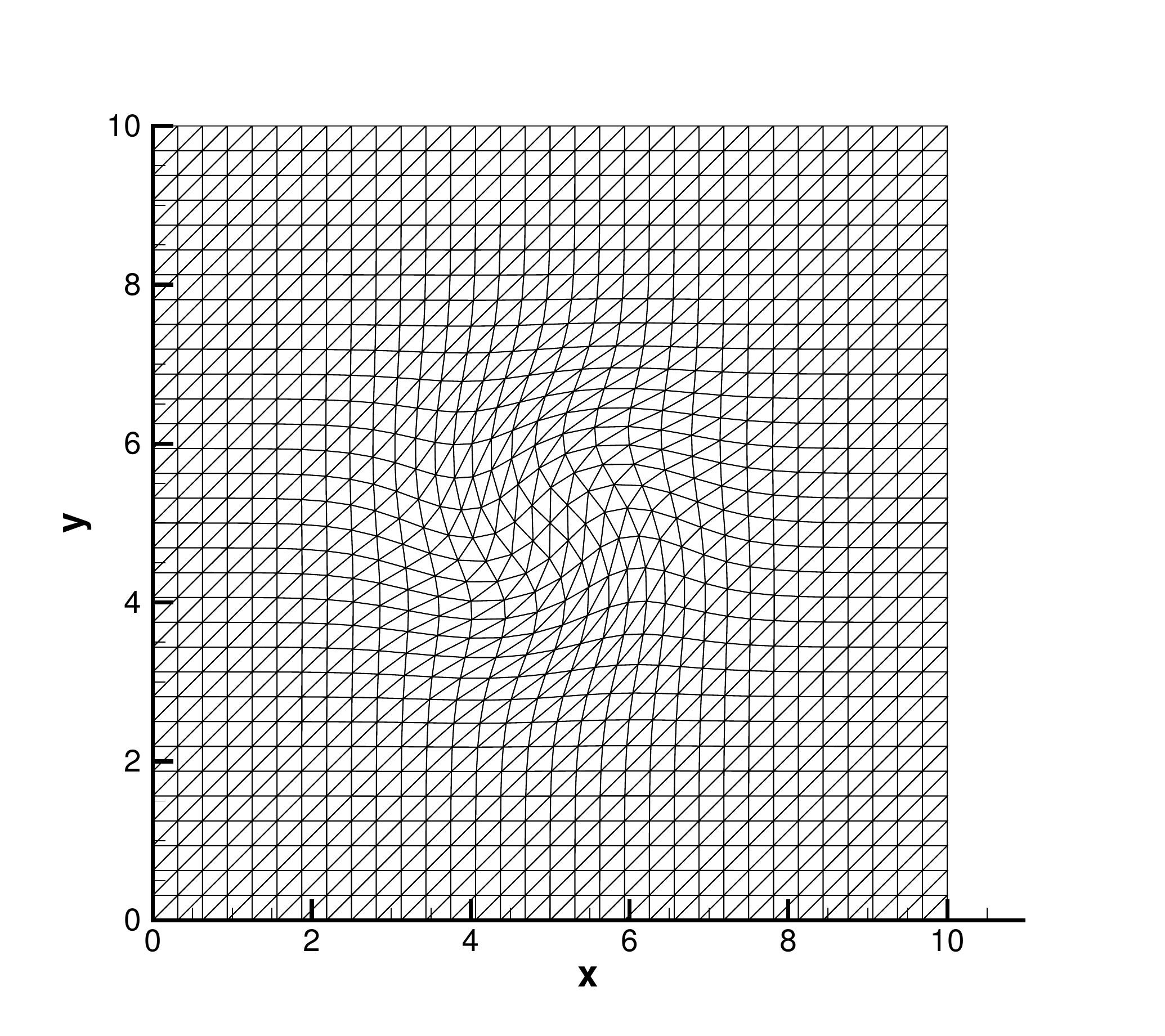}\\
	\includegraphics[width=0.4\linewidth]{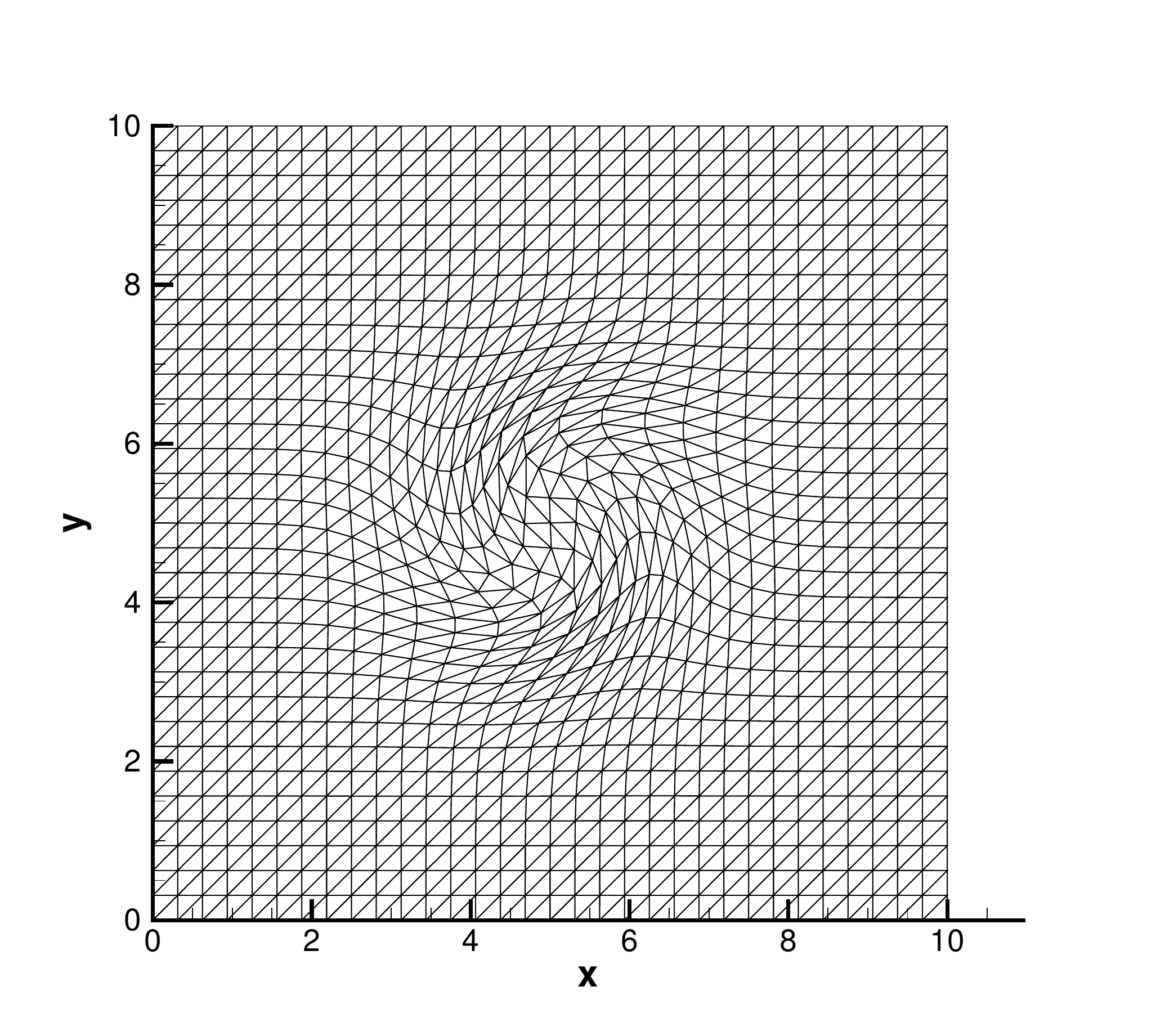}
	\includegraphics[width=0.4\linewidth]{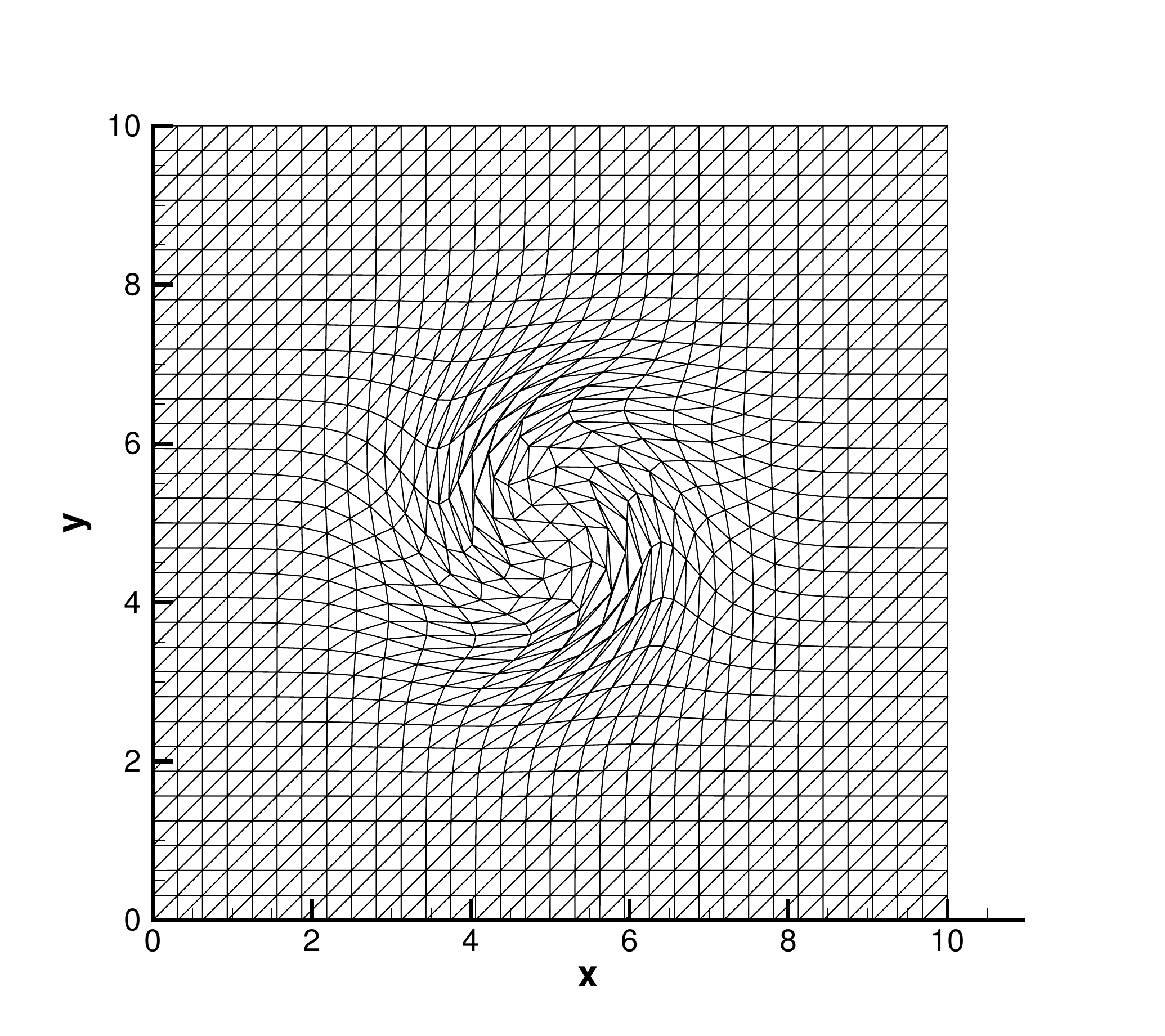}
	\end{center}
	\caption{Motion of mesh $M_2$ for the incompressible vortex test. From top left to bottom right: $t=0$, $t=0.5$, $t=1$, and $t=1.5$.}
	\label{fig:mesh_movementM2}
\end{figure}

{
	The vortex benchmark problem is also employed to numerically demonstrate	the asymptotic preserving property (AP) of the hybrid ALE FV/FE scheme for weakly compressible flows in the incompressible limit $M \to 0$. To this end, we run a set of simulations for decreasing Mach number with $\rho=1$, $p=\frac{p_0}{\gamma}-\halb e^{-(r^2-1)}$, \mbox{$p_0 \in \left\lbrace 1, 10^{2}, 10^{4}, 10^{6}, 10^{8}, 10^{10}, 10^{12}\right\rbrace$,} and compute the corresponding errors and convergence rates, see Table \ref{tab:APanalysis}. The method remains second order accurate for all Mach numbers. The results reported in Table \ref{tab:APanalysis} therefore clearly show the asymptotic preserving (AP) property of the new hybrid ALE FV/FE scheme for weakly compressible flows in the incompressible limit when $M \to 0$. }

\begin{table}[ht!]
	\renewcommand{\arraystretch}{1.2}
	\begin{center}
		{\caption{Numerical analysis of the asymptotic preserving property (AP) of the weakly compressible hybrid ALE FV/FE scheme for the vortex test case in the incompressible limit with \mbox{ $M \in\left\lbrace 1, 10^{-1}, 10^{-2}, 10^{-3}, 10^{-4}, 10^{-5},  10^{-6} \right\rbrace$ at time $t=0.1$.}}\label{SWV_Fr}\label{tab:APanalysis}} 
		
		\vspace{4pt}		
		{\begin{tabular}{ccccccc}
				\hline 
				Mesh &$L^{2}_{\Omega}\left(\rho\right)$ & $\mathcal{O}\left(\rho \right)$                  
				&$L^{2}_{\Omega}\left(\rho\vel \right)$ & $\mathcal{O}\left(\rho\vel \right)$ &  $L^{2}_{\Omega}\left(\press\right)$ & $\mathcal{O}\left(\press\right)$ \\ 
				\hline 
				\multicolumn{7}{c}{ $M \sim 1$,\hspace{0.2cm} $p_0 = 10^0$}
				\\ \hline 
				16  & $4.93 \cdot 10^{-3}$ & $    $ & $1.18 \cdot 10^{-2} $ & $    $ & $1.64 \cdot 10^{-3}$ & $   $ \\
				32  & $1.41 \cdot 10^{-3}$ & $1.8 $ & $3.18 \cdot 10^{-3} $ & $1.9 $ & $5.34 \cdot 10^{-4}$ & $1.6$ \\
				64  & $4.11 \cdot 10^{-4}$ & $1.8 $ & $8.26 \cdot 10^{-4} $ & $1.9 $ & $2.25 \cdot 10^{-4}$ & $1.2$ \\
				128 & $1.36 \cdot 10^{-4}$ & $1.6 $ & $2.26 \cdot 10^{-4} $ & $1.9 $ & $1.04 \cdot 10^{-4}$ & $1.1$ \\
				\hline 				
				\multicolumn{7}{c}{$M \sim 10^{-1}$,\hspace{0.2cm} $p_0 = 10^2$}
				\\ \hline 
				16  & $4.91 \cdot 10^{-3}$ & $   $ & $1.08 \cdot 10^{-2} $ & $   $ & $2.84\cdot 10^{-2}$ & $   $ \\
				32  & $1.39 \cdot 10^{-3}$ & $1.8$ & $2.71 \cdot 10^{-3} $ & $2.0$ & $1.13\cdot 10^{-2}$ & $1.3$ \\
				64  & $4.04 \cdot 10^{-4}$ & $1.8$ & $7.03 \cdot 10^{-4} $ & $2.0$ & $3.22\cdot 10^{-3}$ & $1.8$ \\
				128 & $1.34 \cdot 10^{-4}$ & $1.6$ & $1.99 \cdot 10^{-4} $ & $1.8$ & $8.35\cdot 10^{-4}$ & $2.0$ \\
				\hline 
				\multicolumn{7}{c}{$M \sim 10^{-2}$,\hspace{0.2cm} $p_0 = 10^4$}
				\\ \hline 
				16  & $4.89 \cdot 10^{-3}$ & $   $ & $1.11 \cdot 10^{-2} $ & $   $ & $4.65 \cdot 10^{-1}$ & $   $ \\
				32  & $1.38 \cdot 10^{-3}$ & $1.8$ & $2.83 \cdot 10^{-3} $ & $2.0$ & $1.33 \cdot 10^{-1}$ & $1.8$ \\
				64  & $3.96 \cdot 10^{-4}$ & $1.8$ & $7.39 \cdot 10^{-4} $ & $1.9$ & $3.43 \cdot 10^{-2}$ & $2.0$ \\
				128 & $1.31 \cdot 10^{-4}$ & $1.6$ & $2.07 \cdot 10^{-4} $ & $1.8$ & $8.47 \cdot 10^{-3}$ & $2.0$ \\
				\hline
				\multicolumn{7}{c}{$M \sim 10^{-3}$,\hspace{0.2cm} $p_0 = 10^6$}
				\\ \hline 
				16  & $4.89 \cdot 10^{-3}$ & $   $ & $1.11 \cdot 10^{-2} $ & $   $ & $4.53 \cdot 10^{+1}$ & $   $ \\
				32  & $1.38 \cdot 10^{-3}$ & $1.8$ & $2.82 \cdot 10^{-3} $ & $2.0$ & $1.29 \cdot 10^{+1}$ & $1.8$ \\
				64  & $3.96 \cdot 10^{-4}$ & $1.8$ & $7.35 \cdot 10^{-4} $ & $1.9$ & $3.31 \cdot 10^{0}$ & $2.0$ \\
				128 & $1.30 \cdot 10^{-4}$ & $1.6$ & $2.06 \cdot 10^{-4} $ & $1.8$ & $8.17 \cdot 10^{-1}$ & $2.0$ \\
				\hline 				
				\multicolumn{7}{c}{$M \sim 10^{-4}$,\hspace{0.2cm} $p_0 = 10^8$}
				\\ \hline 
				16  & $4.89 \cdot 10^{-3}$ & $   $ & $1.11 \cdot 10^{-2} $ & $   $ & $4.53 \cdot 10^{+3}$ & $   $ \\
				32  & $1.38 \cdot 10^{-3}$ & $1.8$ & $2.82 \cdot 10^{-3} $ & $2.0$ & $1.29 \cdot 10^{+3}$ & $1.8$ \\
				64  & $3.96 \cdot 10^{-4}$ & $1.8$ & $7.35 \cdot 10^{-4} $ & $1.9$ & $3.31 \cdot 10^{+2}$ & $2.0$ \\
				128 & $1.30 \cdot 10^{-4}$ & $1.6$ & $2.06 \cdot 10^{-4} $ & $1.8$ & $8.17 \cdot 10^{+1}$ & $2.0$ \\
				\hline 
				\multicolumn{7}{c}{$M \sim 10^{-5}$,\hspace{0.2cm} $p_0 = 10^{10}$}
				\\ \hline 
				16  & $4.89 \cdot 10^{-3}$ & $   $ & $1.11 \cdot 10^{-2} $ & $   $ & $4.53 \cdot 10^{+5}$ & $   $ \\
				32  & $1.38 \cdot 10^{-3}$ & $1.8$ & $2.82 \cdot 10^{-3} $ & $2.0$ & $1.29 \cdot 10^{+5}$ & $1.8$ \\
				64  & $3.96 \cdot 10^{-4}$ & $1.8$ & $7.35 \cdot 10^{-4} $ & $1.9$ & $3.31 \cdot 10^{+4}$ & $2.0$ \\
				128 & $1.30 \cdot 10^{-4}$ & $1.6$ & $2.06 \cdot 10^{-4} $ & $1.8$ & $8.17 \cdot 10^{{+}3}$ & $2.0$ \\
				\hline 
				\multicolumn{7}{c}{$M \sim 10^{-6}$,\hspace{0.2cm} $p_0 = 10^{12}$}				
				\\ \hline 
				16  & $4.89 \cdot 10^{-3}$ & $   $ & $1.11 \cdot 10^{-2} $ & $   $ & $4.53 \cdot 10^{+7}$ & $   $ \\
				32  & $1.38 \cdot 10^{-3}$ & $1.8$ & $2.83 \cdot 10^{-3} $ & $2.0$ & $1.28 \cdot 10^{+7}$ & $1.8$ \\
				64  & $3.99 \cdot 10^{-4}$ & $1.8$ & $7.43 \cdot 10^{-4} $ & $1.9$ & $3.26 \cdot 10^{+6}$ & $2.0$ \\
				128 & $1.53 \cdot 10^{-4}$ & $1.4$ & $2.46 \cdot 10^{-4} $ & $1.6$ & $7.50 \cdot 10^{{+}5}$ & $2.1$ \\
				\hline 
		\end{tabular}}
	\end{center}
\end{table}

{One of the main advantages of the new semi-implicit hybrid ALE FV/FE method proposed in this paper with respect to classical explicit ALE schemes, see e.g.  \cite{Despres2009,Maire2009,chengshu1,LoubereSedov3D,Lagrange2D,Lagrange3D,LagrangeDG,GBCKSD19,GCD18,Gaburro2021}, is its smaller computational cost for \textit{low Mach number flows} and its ability to solve the \textit{incompressible} Navier-Stokes equations, which explicit ALE methods cannot solve at all. Since the sound velocity does not appear in the eigenvalues of the explicit subsystem, the CFL time step restriction is less severe and thus in the incompressible limit the semi-implicit ALE schemes allow much bigger time steps compared to the explicit ones. To numerically show this property, we have run a last test case with reference pressure  $p_0=10^4$, thus $M\sim 10^{-2}$, using the semi-implicit hybrid ALE FV/FE method for incompressible and weakly compressible flows and a fully explicit second order density-based Godunov-type finite volume ALE scheme solving the compressible Navier-Stokes system. The results reported in Table \ref{tab:IV_times_errors_explicit} for a fixed Courant number of CFL=0.5 show that the CPU time needed per element and time step, $t_e=t_w/(n_e n_t)$, with $t_w$ the total wall clock time, $n_e$ the number of elements and $n_t$ the number of time steps, is significantly smaller for the explicit schemes.  However, the semi-implicit algorithm requires much less time steps due to the less restrictive CFL condition and thus results in a much lower overall CPU time for the entire simulation compared to the explicit ALE method. 
Last but not least, we can note that for all schemes analyzed before the overall overhead introduced by the moving mesh technique presented in this paper is rather \textit{low} compared to the same schemes on a fixed mesh, see the time per element update indicator $t_e$ in Table \ref{tab:IV_times_errors_explicit}. }
\begin{table}
	\begin{center}
		\renewcommand{\arraystretch}{1.2}
		{\begin{tabular}{cccccc}
			\hline
			Method & $n_t$    & CPU time (s) & $t_{e}$ ($\mu$s) &  $L^2_{M_3}(p)$ & $L^2_{M_3}(\mathbf{w}_{\mathbf{u}})$ \\\hline
			Semi-implicit Eulerian, incompressible &
			$12 $ & $4.39$ & $29.46 $ &  $3.93\cdot 10^{-3} $ & $1.29\cdot 10^{-3} $\\
			Semi-implicit ALE, incompressible &
			$12 $ & $ 4.85$ & $32.55 $ &  $3.33\cdot 10^{-3} $ & $1.33\cdot 10^{-3} $\\
			Semi-implicit Eulerian, incompressible - PC &
			$12 $ & $3.31$ & $22.20 $ &  $5.03\cdot 10^{-3} $ & $1.15\cdot 10^{-3} $\\
			Semi-implicit ALE, incompressible - PC &
			$12 $ & $3.38$ & $22.69 $ &  $4.54\cdot 10^{-3} $ & $1.17\cdot 10^{-3} $\\
			Semi-implicit Eulerian, weakly compressible &			
			$6 $ & $2.33  $ & $31.37 $ &  $1.85 \cdot 10^{-2} $ & $ 1.62 \cdot 10^{-3} $\\
			Semi-implicit ALE, weakly compressible &			
			$6 $ & $3.02 $ & $40.54 $ &  $3.28\cdot 10^{-2} $ & $1.50\cdot 10^{-3} $\\
			Explicit Eulerian, fully compressible &
			$852 $ & $39.21$ & $3.70 $ & $1.44\cdot 10^{-2} $ &  $8.39\cdot 10^{-3} $\\
			Explicit ALE, fully compressible &
			$852 $ & $39.80$ & $3.76 $ & $8.36\cdot 10^{-2} $ &  $1.42\cdot 10^{-3} $ 
			\\\hline
		\end{tabular}
		 \caption{Number of time steps $n_t$, CPU time $t_w$, CPU time per element and timestep, $t_{e}$, and errors obtained for the vortex test case on mesh $M_{3}$ using the semi-implicit hybrid ALE FV/FE schemes and an explicit ALE finite volume scheme for the compressible Navier-Stokes equations as well as the corresponding purely Eulerian schemes.	}\label{tab:IV_times_errors_explicit}}
	\end{center}
\end{table}

\subsection{Inviscid flow around a moving cylinder}

The second test analyses the flow of an inviscid incompressible fluid around a moving cylinder. As computational domain we consider a circle of outer radius $R=10$ pierced by a concentric circular cylinder of radius $r_c = 0.5$. At the outer boundary we impose the pressure $p=p_0=0$ while over the cylinder we employ inviscid wall boundary conditions. The domain is discretized using 29418 triangular primal elements with a characteristic mesh spacing of $h=0.02$ at the cylinder wall and a mesh spacing of $h=0.2$ at the outer boundary of the domain. The viscosity is $\mu=0$ and the gravity is $\mathbf{g}=\mathbf{0}$. The fluid is initially at rest with zero pressure and the entire mesh performs a rigid body motion to the left with velocity $\mV=(-u_{\infty},0)$, $u_{\infty}=1$. Simulations are carried out until a final time of $t=1$. We expect a potential flow to develop, hence we can compare our numerical results with the exact solution of a potential flow around the cylinder, given by 
\begin{gather*}
	v_r      =  u_{\infty} (1-r_c^2/r^2) \cos(\phi), \qquad 
	v_{\phi} = -u_{\infty} (1+r_c^2/r^2) \sin(\phi), \\ 	
	u_1      =  v_r \cos(\phi) - v_{\phi} \sin(\phi) - u_{\infty}, \qquad 
	u_2      =  v_r \sin(\phi) + v_{\phi} \cos(\phi),  
\end{gather*} 
where the pressure is given by the Bernoulli equation $p = p_0 + \halb u_{\infty}^2 - \halb (v_r^2+v_{\phi}^2)$, the radius is $r = \sqrt{(x+u_{\infty} t)^2+y^2}$ and the angle satisfies $\tan \phi = y/(x+u_{\infty} t)$ at time $t=1$. A comparison between the numerical result obtained with our new ALE hybrid FV/FE scheme and the potential flow reference solution is presented in Figure~\ref{fig.potflow} for the radius $r=0.501$. We can observe an excellent agreement for both, velocity and pressure.

\begin{figure}
	\begin{center}
		\includegraphics[width=0.49\linewidth]{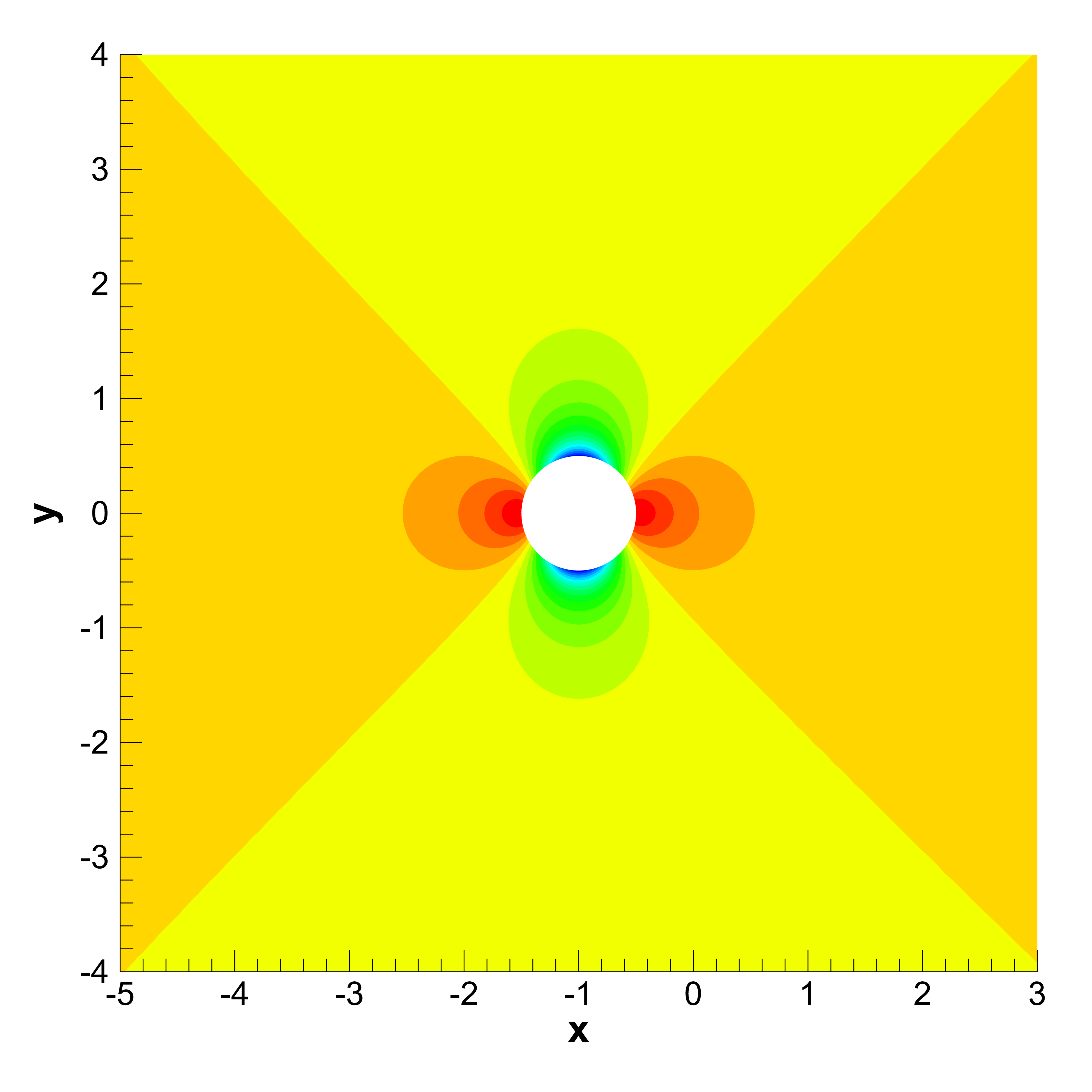}\hfill
		\includegraphics[width=0.49\linewidth]{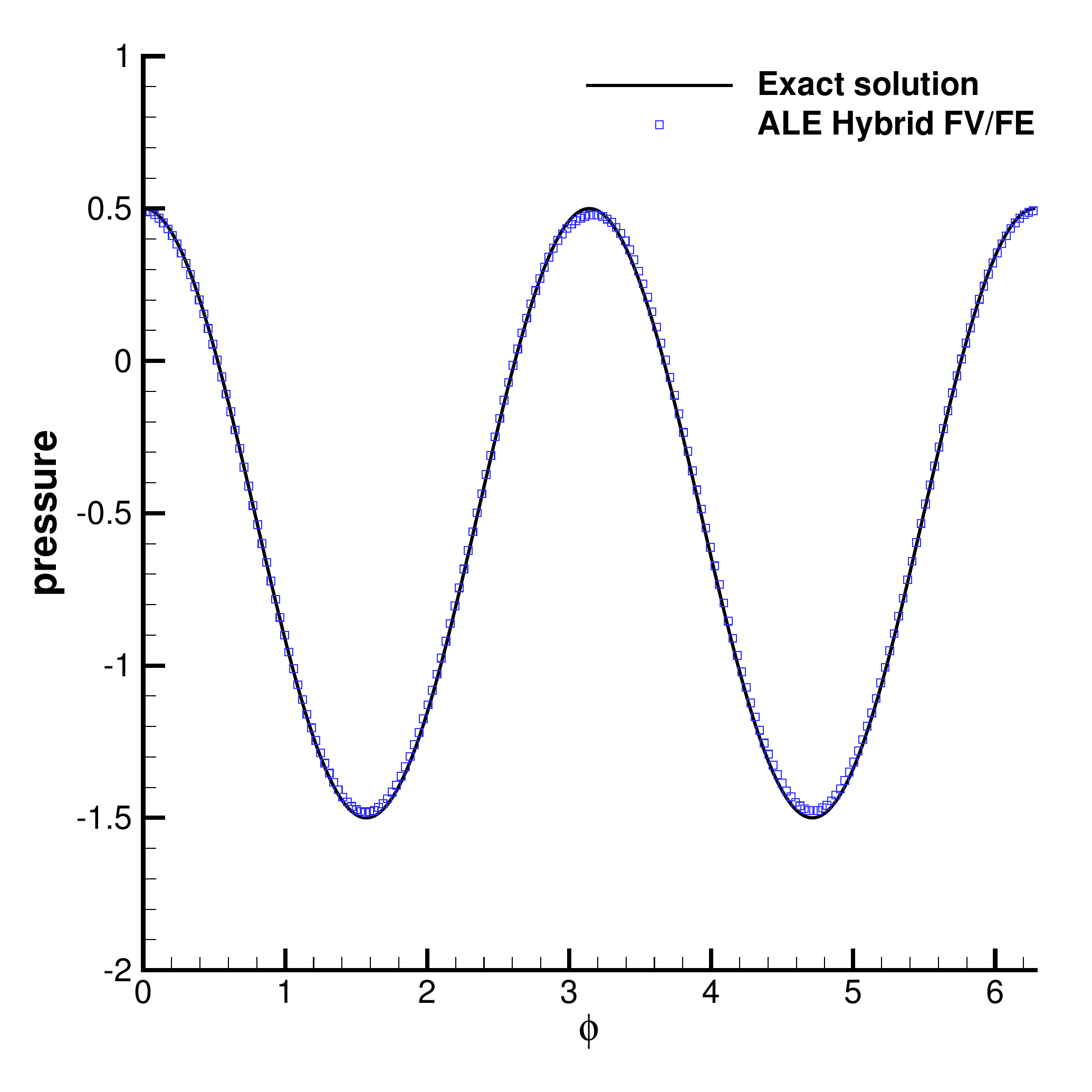}
		
		\vspace{0.2cm}		
		\includegraphics[width=0.49\linewidth]{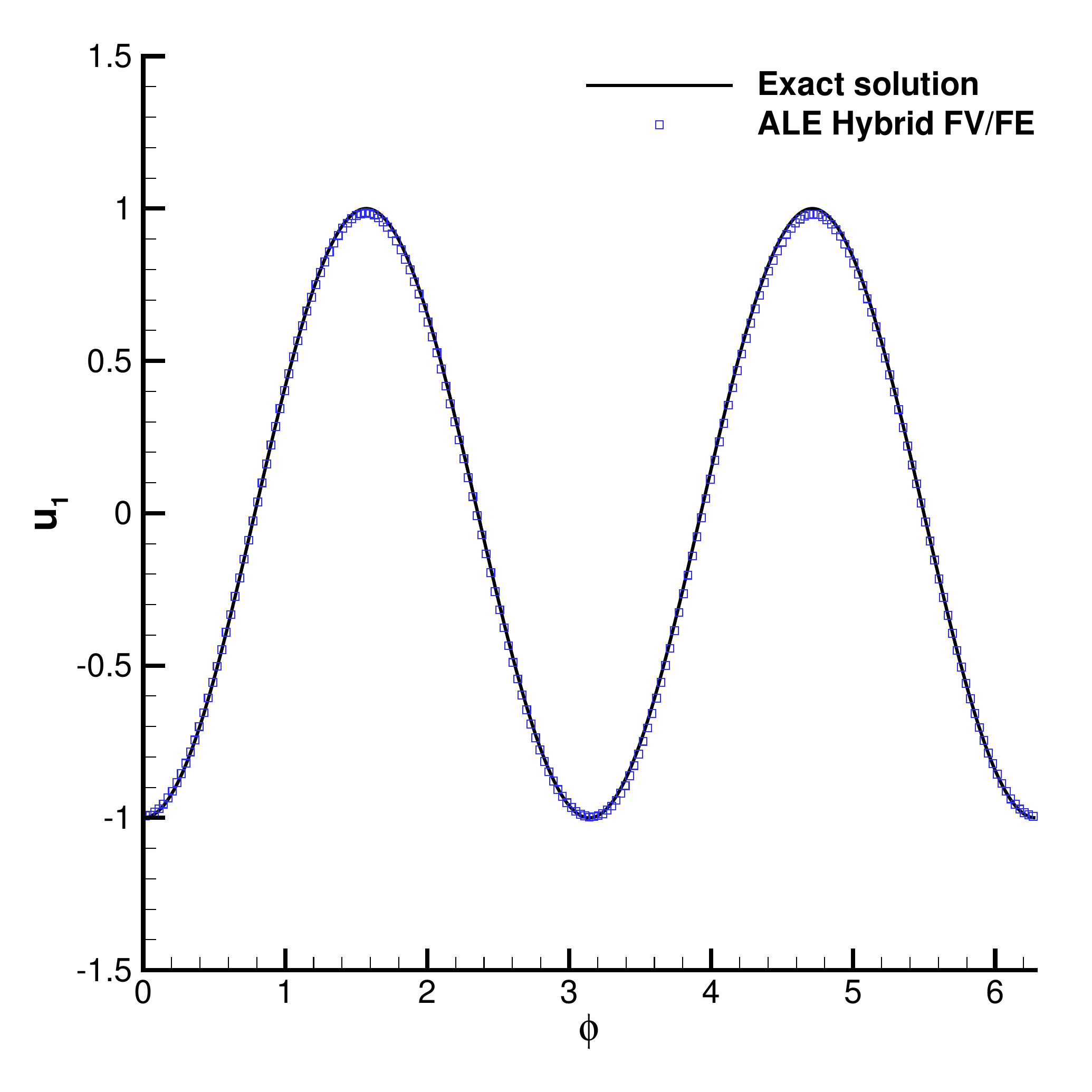}\hfill 
		\includegraphics[width=0.49\linewidth]{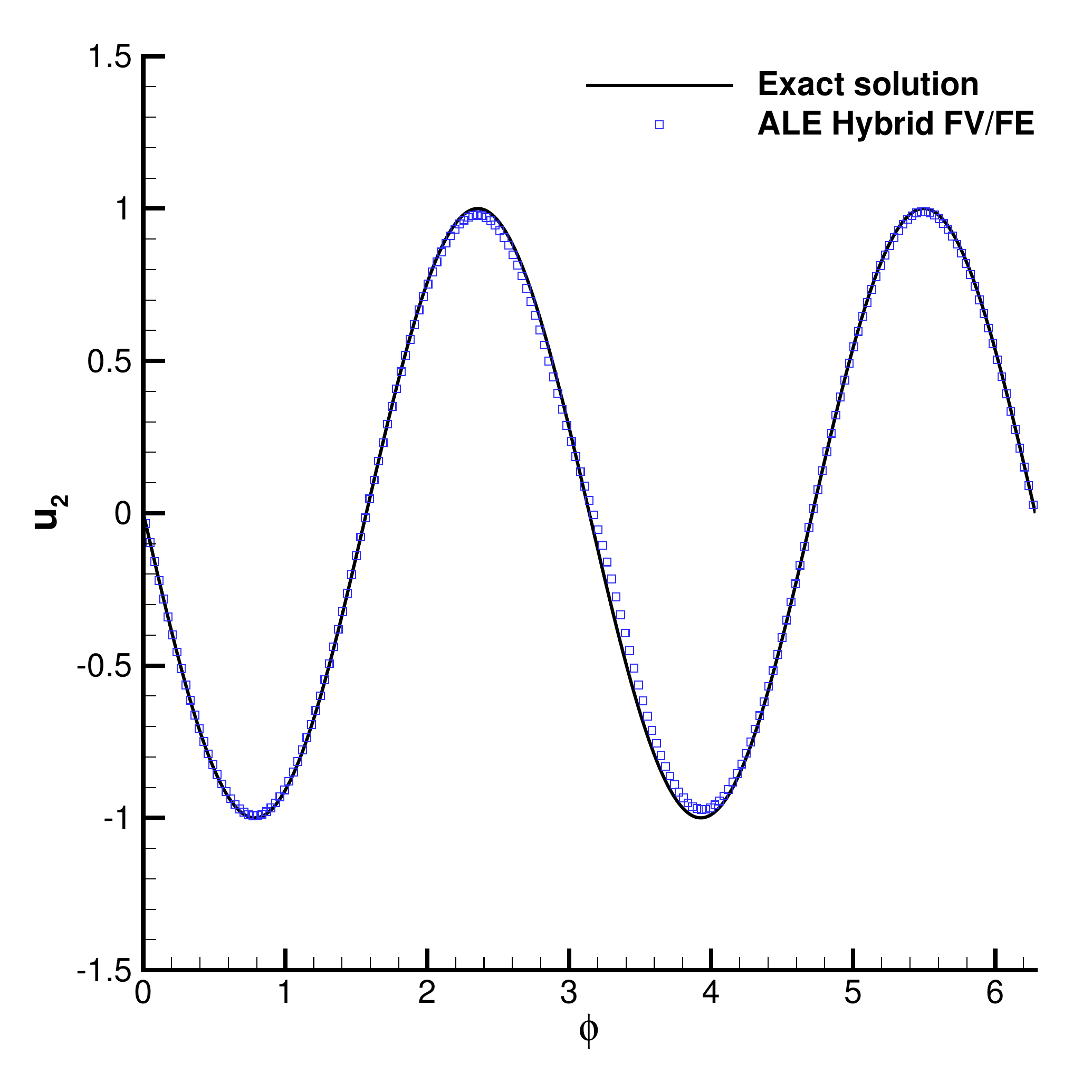}
	\end{center}
	\caption{Solution obtained for the inviscid flow around a cylinder at time $t=1$. The left top figure shows the  contour plot of the pressure field around the cylinder. Meanwhile, the three 1D plots report the values of $u_1$, $u_2$ and $p$ along the circle of radius $r=0.501$ computed with the ALE Hybrid FV/FE scheme (blue squares) and the exact solution (black solid line).}\label{fig.potflow}
\end{figure}  

\subsection{{Incompressible} viscous flow around an oscillating cylinder}

In this test, we analyse the problem of the flow around an oscillating cylinder which has been thoroughly studied in~\cite{BlackburnHenderson1999, Guilmineau2002, RamirezNogueira2017, Lu1996}. 
We first focus on the simulation of the flow around the static cylinder. In this way, we can study the natural frequency of the vortex shedding. The domain considered is the region limited by an outer cylinder, of radius $r=10$, and an inner cylinder of radius $r=0.5$, both centered at $\x=\mathbf{0}$. It is discretized considering three different meshes, $M_1$, $M_2$ and $M_3$ with 13812, 29418 and 45804 triangular primal elements, respectively. 
The mesh velocity is obtained from the solution of the Laplace equation, with zero velocity on the outer boundaries and with a given velocity imposed on the cylinder. 
In Figure~\ref{fig:cyl_mesh}, a zoom of the mesh around the cylinder is shown for all the three grids. 
\begin{figure}[h!]
	\begin{center}
	\includegraphics[width=0.32\linewidth]{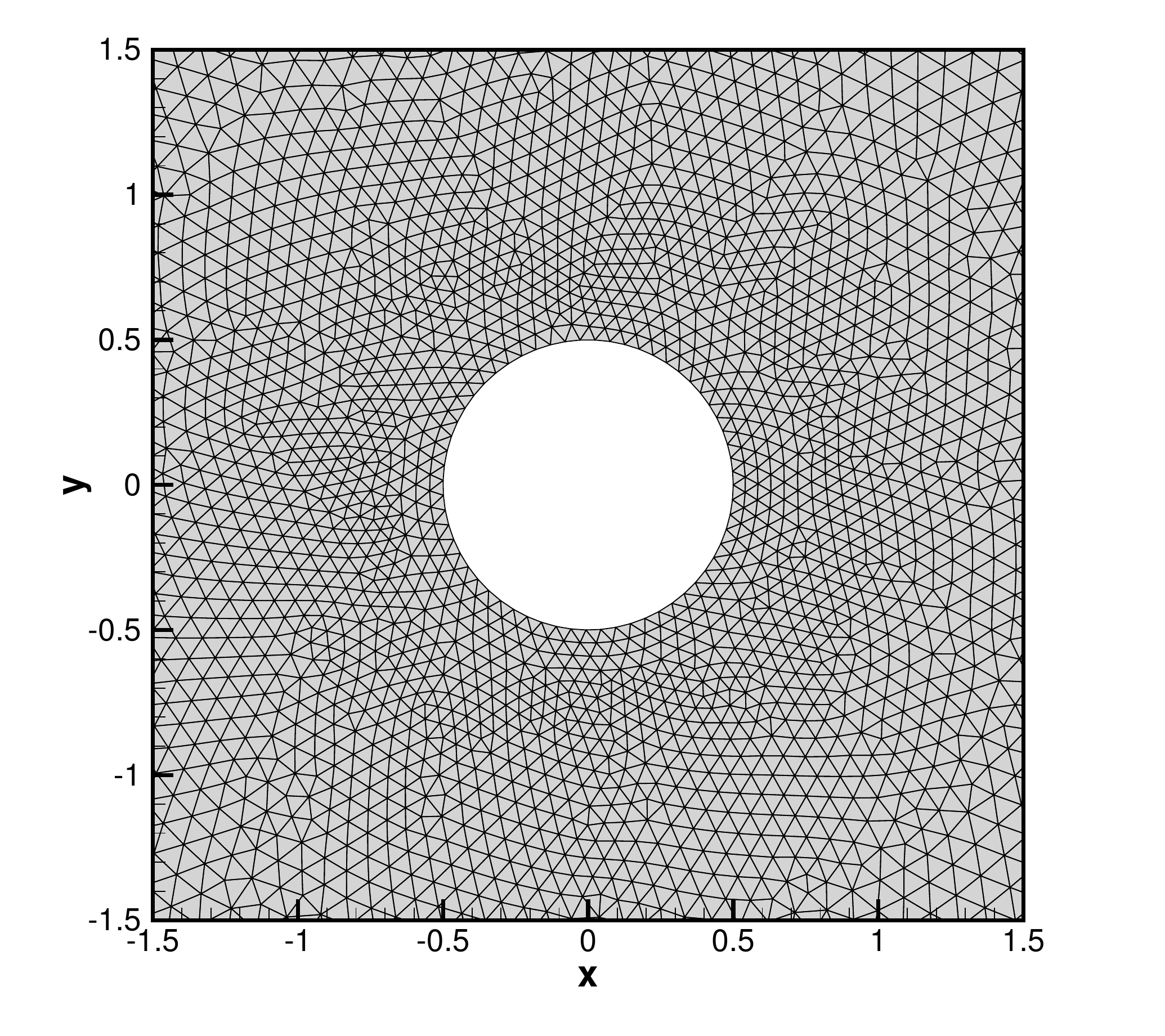}
	\includegraphics[width=0.32\linewidth]{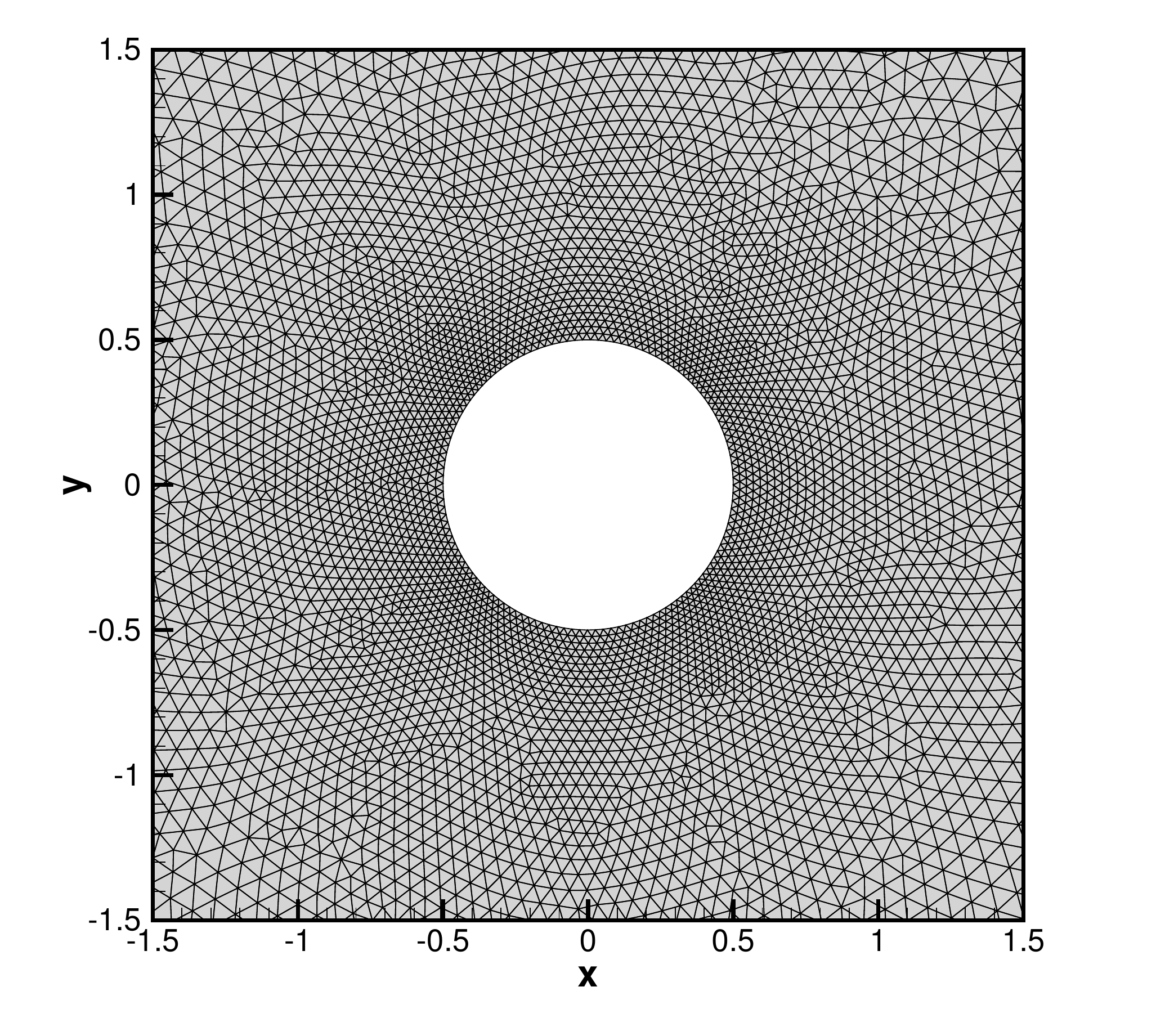}
	\includegraphics[width=0.32\linewidth]{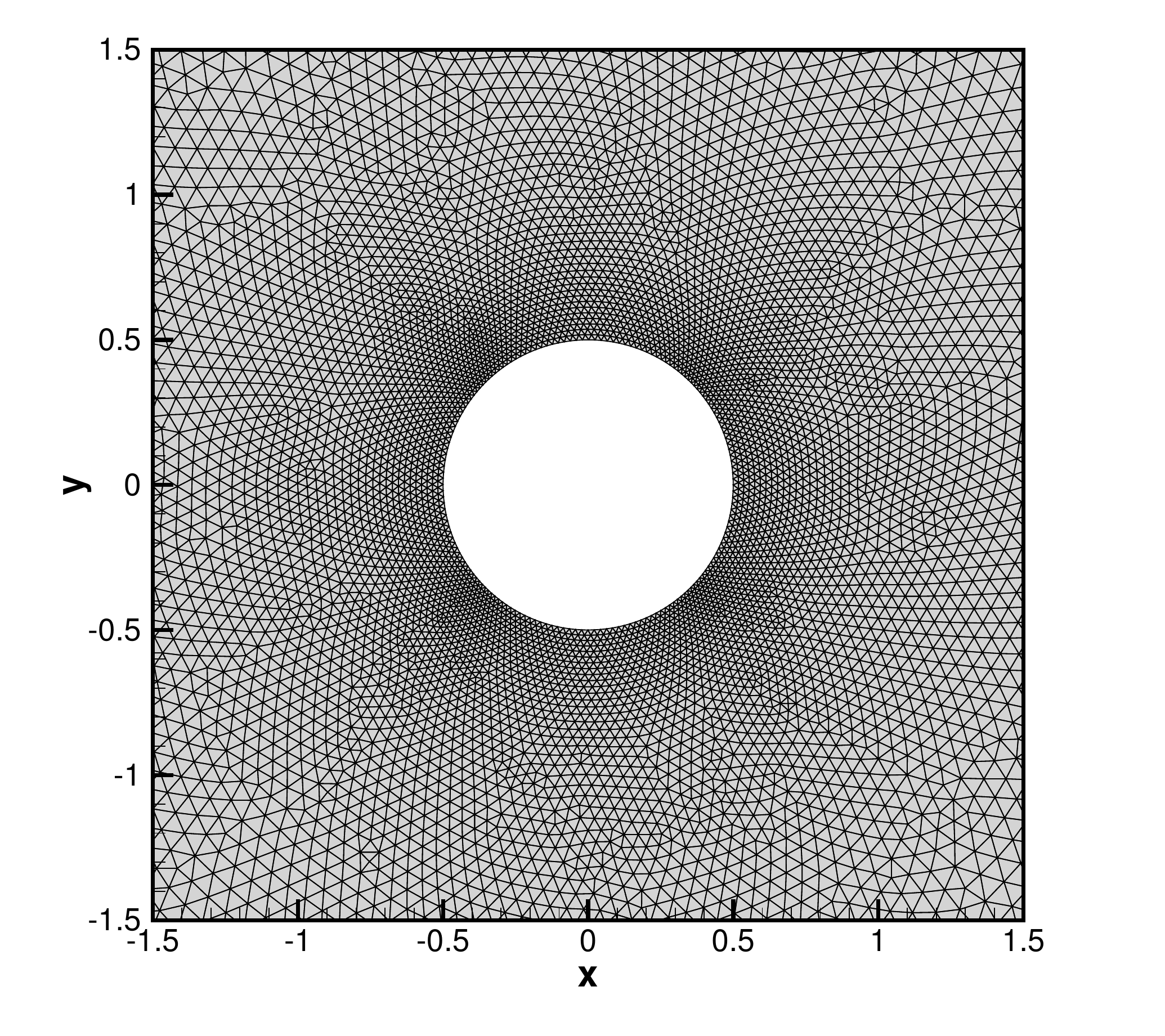}
	\end{center}
	\caption{Zoom around the cylinder of the three primal meshes, $M_1$ (left), $M_2$ (center), $M_3$ (right), used for discretizing the domain for the simulation of a viscous flow passing an oscillating cylinder.}
	\label{fig:cyl_mesh}
\end{figure}
The density is set to $\rho=1$, while the initial velocity and pressure fields are given by the corresponding potential flow solution around the cylinder, {with the reference velocity chosen as the characteristic velocity needed to obtain a desired Re number. In particular, to obtain a Reynolds number of Re$=185$, we have set the laminar viscosity of the fluid to $\mu= 0.0054$ and $u_{\infty}=1$}. In the outer boundary a pressure outlet condition is imposed and for the inner cylinder a viscous wall boundary is considered. As it is well known, after some time, the vortices reach a stable periodic regime, the von K\'arm\'an street appears and we can compute the frequency of the vortex shedding. In the left plot of Figure~\ref{fig:staticcyl}, we can observe the drag and lift coefficients, while in the right plot, the time series of the velocity at $\x=(4,0)$ are portrayed. In this case, the domain has been discretized with the coarsest mesh, $M_1$, obtaining an oscillating period of the static cylinder of $T=5.13$. Thus, the associated Strouhal number is $\St = \frac{f_0 D}{u} = 0.195$, which coincides with the value appearing in the literature (see, e.g.~\cite{Guilmineau2002, Lu1996}). 
\begin{figure}[h!]
	\begin{center}
	\includegraphics[width=0.49\linewidth]{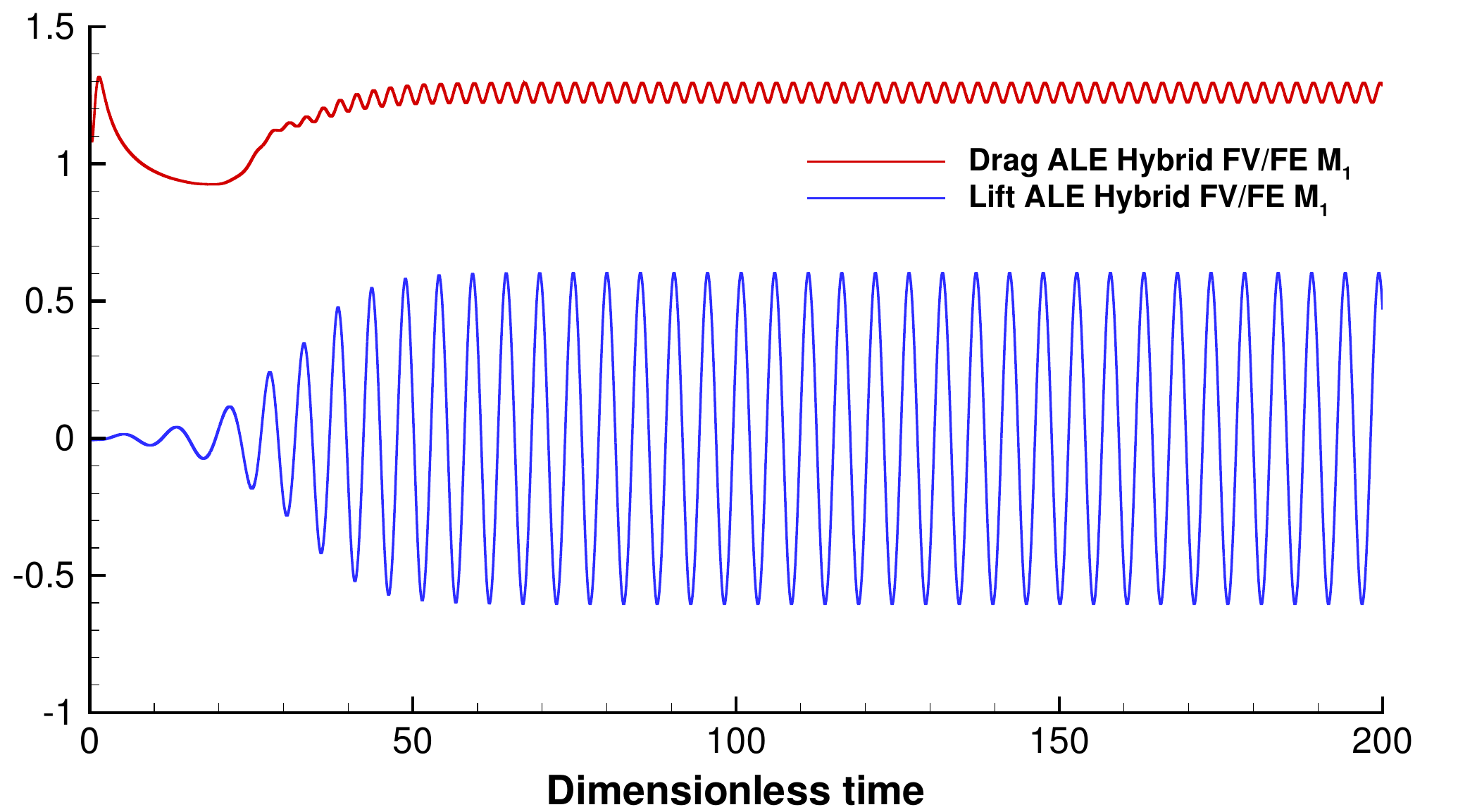}
	\includegraphics[width=0.49\linewidth]{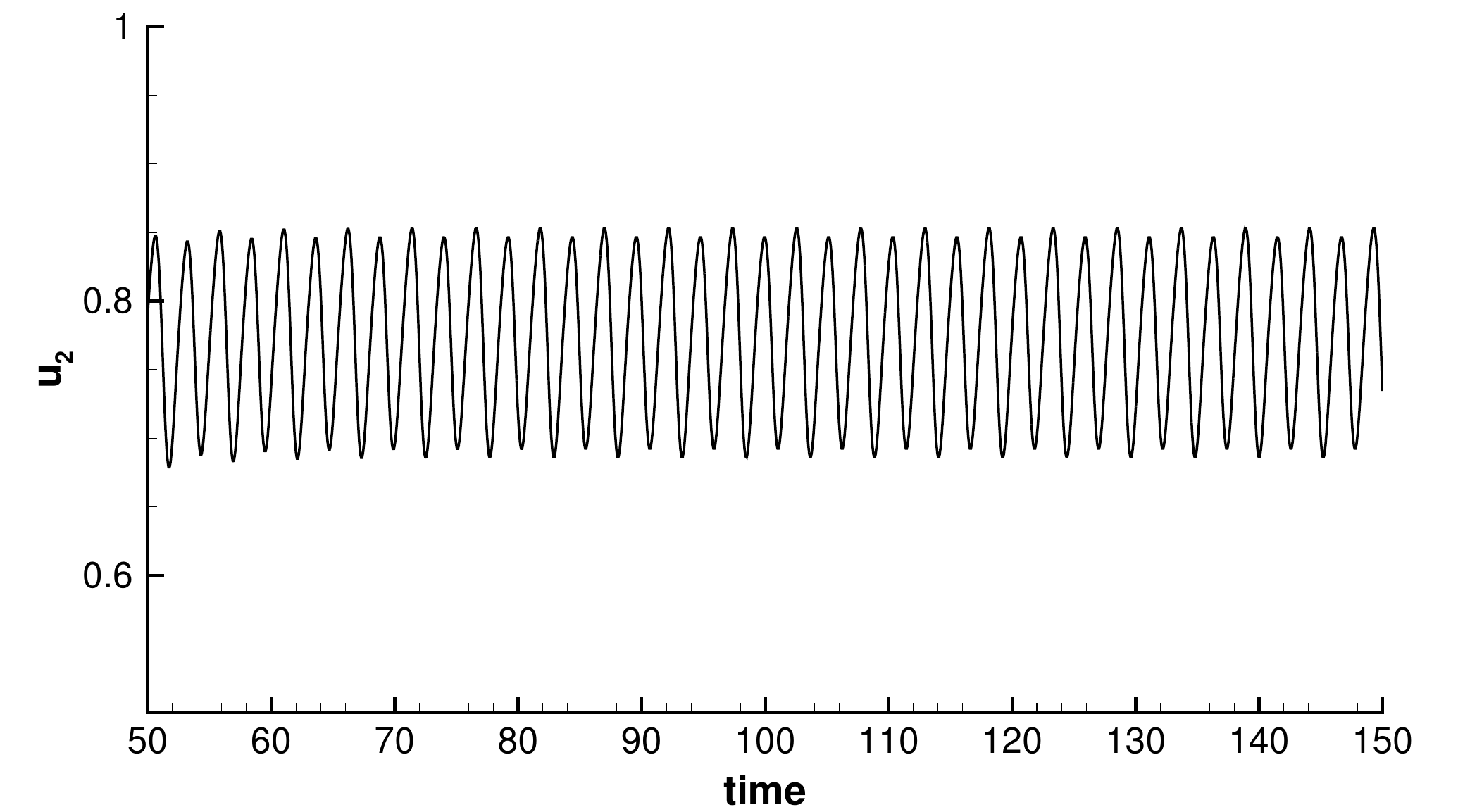}
	\end{center}
	\caption{Left: Drag and lift coefficient of a static cylinder with Re=185. Right: Time series of the velocity $u_2$ at $\x=(4,0)$. The domain has been discretized with the mesh $M_1$.}
	\label{fig:staticcyl}
\end{figure}

We now address two oscillating cylinder tests for Re$=185$ with different frequencies of oscillation. A vertical movement is imposed to the cylinder so that its center is located at $(x_c(t),y_c(t)) =(0, A \sin(2\pi f t))$. 
The oscillation amplitude is set to $A=0.2$ and the two different oscillation frequencies considered are $f=0.8 f_0$ and $f=1.1 f_0$, where $f_0$ is the frequency of the vortex shedding in the static case. We first focus on the smallest oscillation frequency, $f=0.8 f_0$. The left plot of Figure~\ref{fig:osccyl_drag_lift_64} shows the drag and lift coefficients of the oscillating cylinder
obtained running the ALE hybrid scheme on the coarsest mesh $M_1$. A good agreement is observed when compared with the results computed by Guilmineau \textit{et al.}~\cite{Guilmineau2002}. Moreover, the right plot demonstrates the mesh convergence obtained when we perform the simulation with finer grids, $M_2$ and $M_3$.
\begin{figure}[h!]
	\begin{center}
	\includegraphics[width=0.49\linewidth]{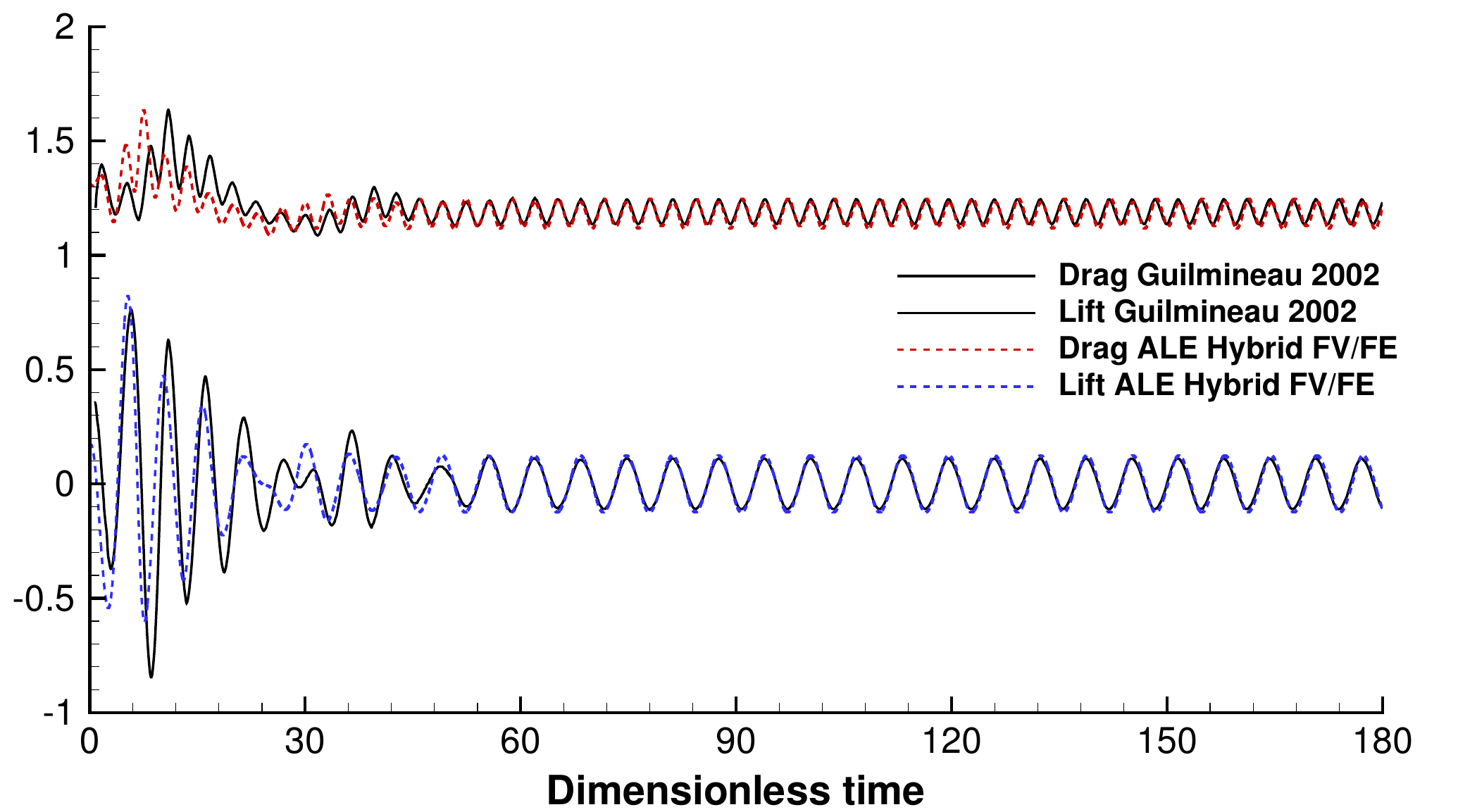}
	\includegraphics[width=0.49\linewidth]{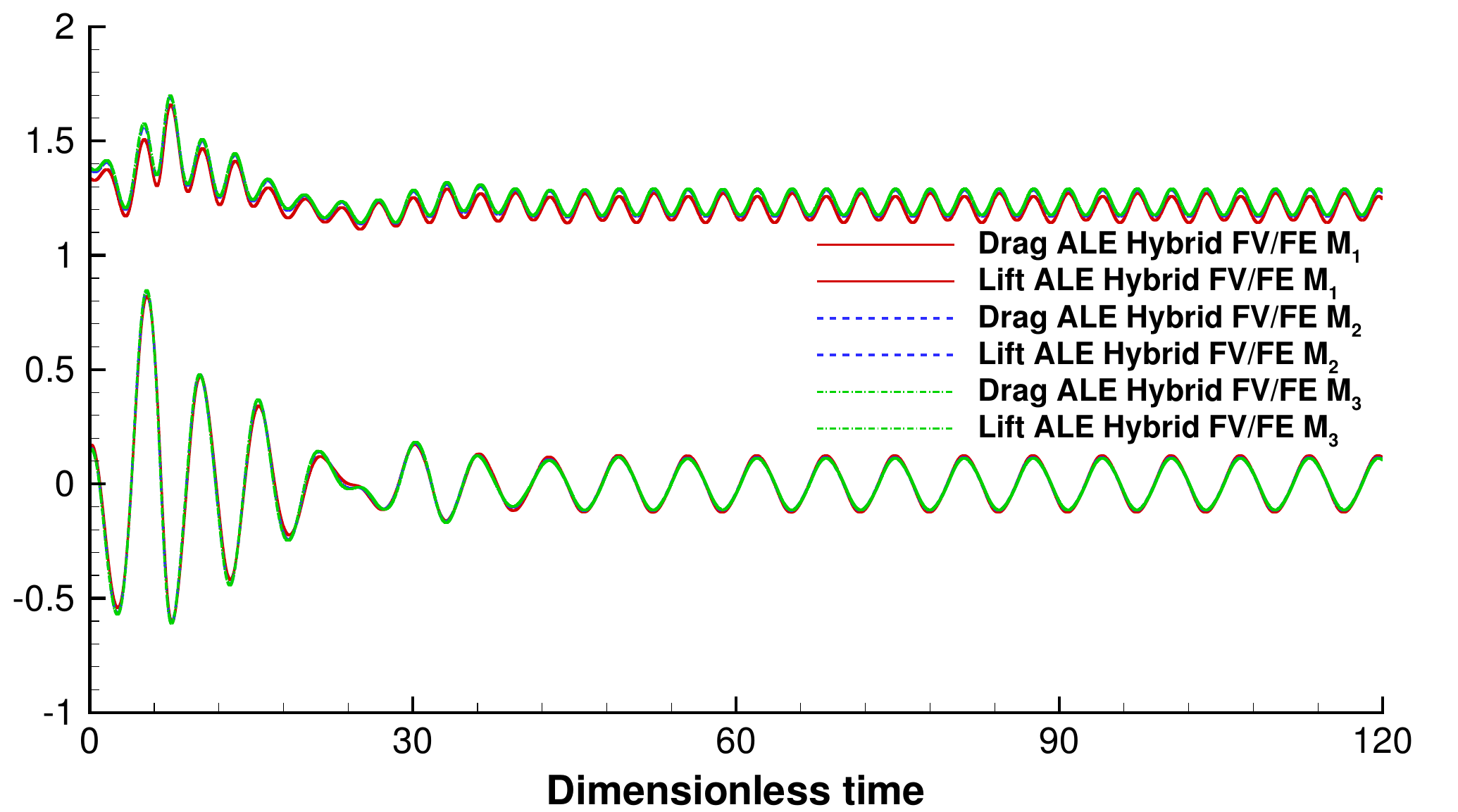}	
	\end{center}
	\caption{Drag and lift coefficients of an oscillating cylinder with Re$=185$, $f=\frac{0.8}{5.13}$, obtained with the ALE hybrid scheme in comparison with the results of Guilmineau \textit{et al.}~\cite{Guilmineau2002} (left). Mesh convergence study for the drag and lift coefficients (right).}
	\label{fig:osccyl_drag_lift_64}
\end{figure}
The periodic change of drag and lift coefficients plotted in terms of the vertical position of the center of the cylinder is shown in Figure~\ref{fig:time_periodic_drag_lift_08}
while the vorticity contours at times $t=8$, $t=9.3$, $t=11.1$, and $t=12.7$ are depicted in Figure~\ref{fig:osccyl_vorticity_64}. The regular symmetric periodic shed of the vortex is observed.
\begin{figure}[h!]
	\begin{center}
	\includegraphics[width=0.425\linewidth]{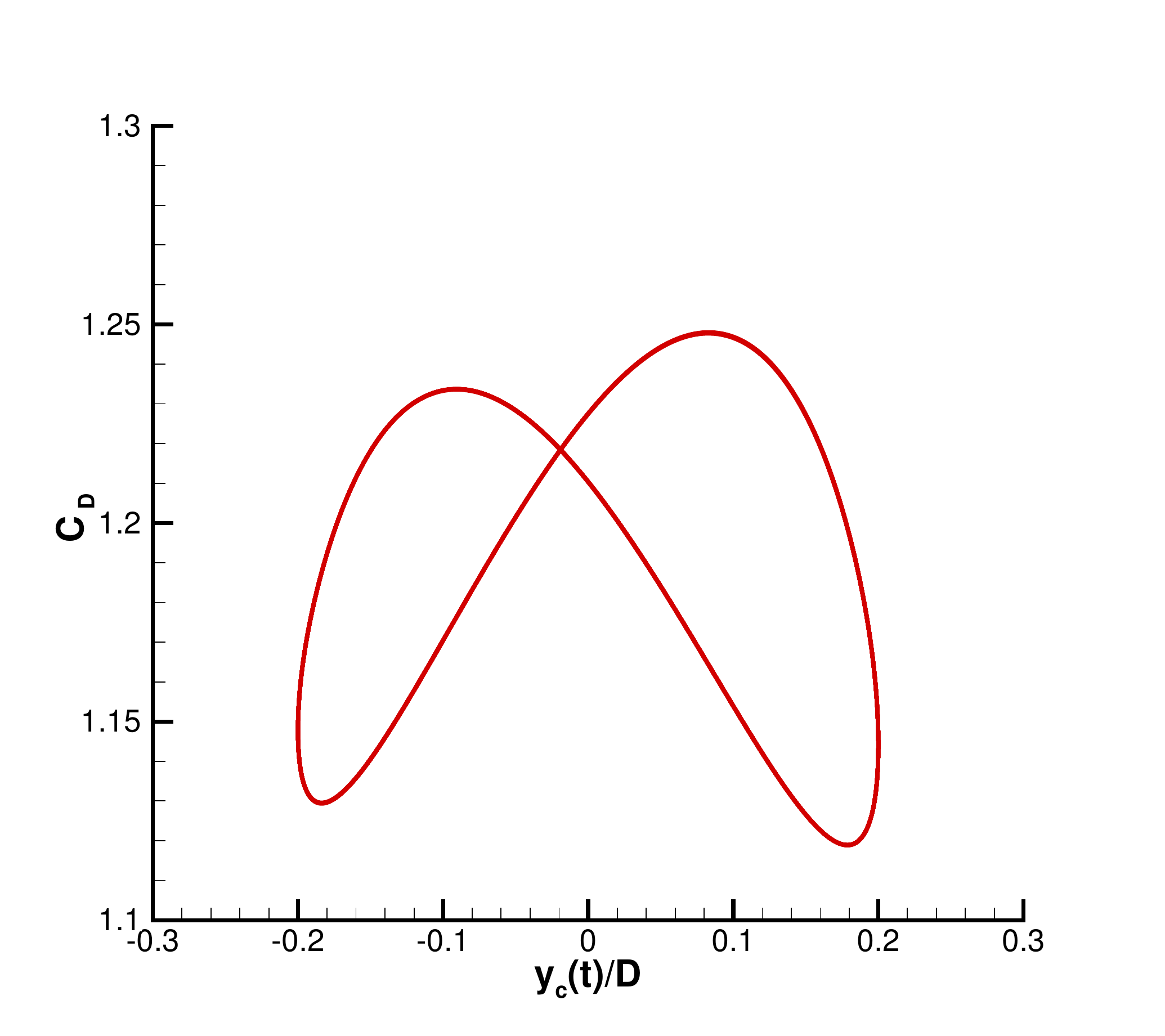}
	\includegraphics[width=0.425\linewidth]{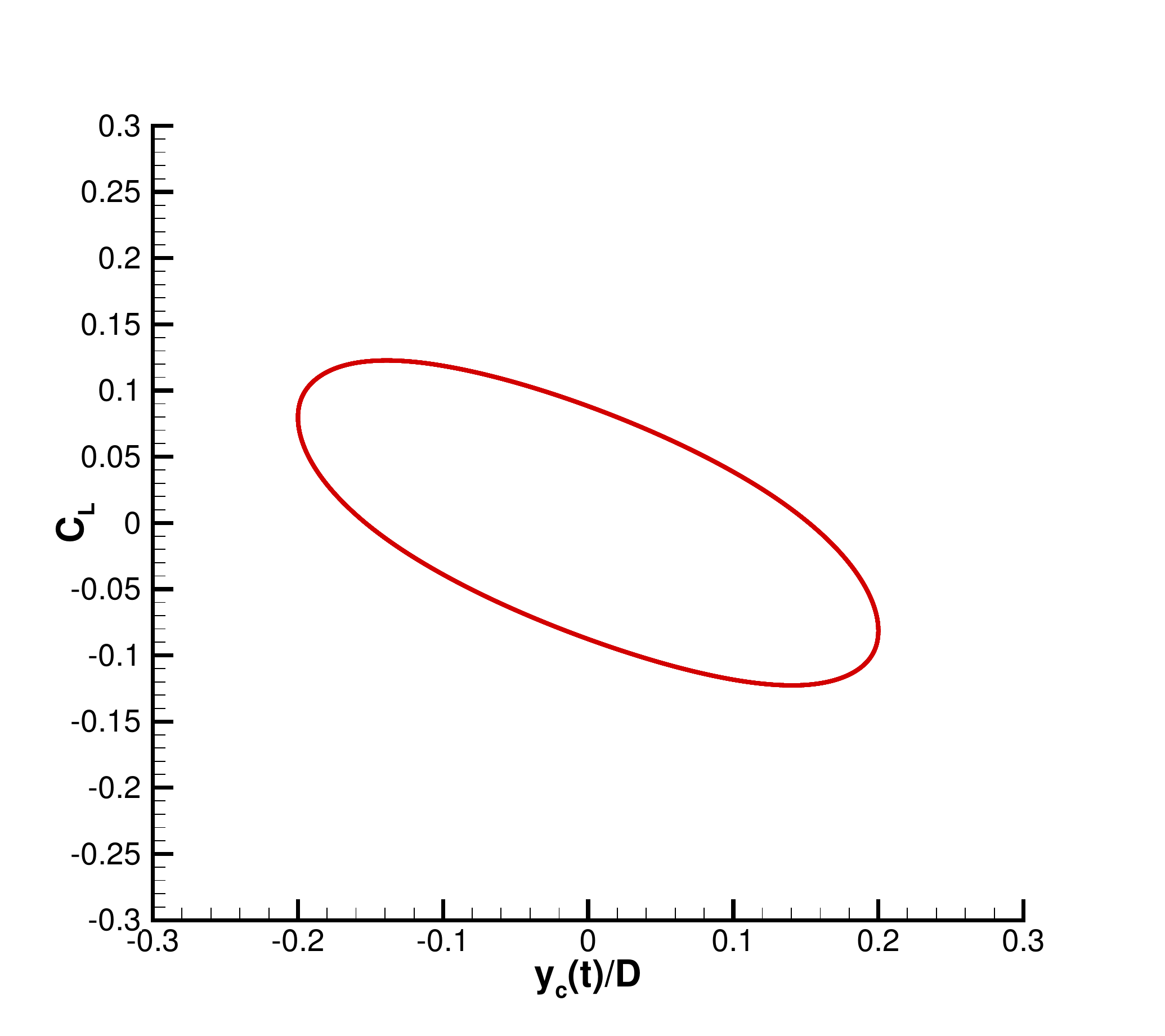}
	\end{center}
	\caption{Time periodic drag (left) and lift (right) coefficients of an oscillating cylinder with oscillation frequency $f=\frac{0.8}{5.13}$.}
	\label{fig:time_periodic_drag_lift_08}
\end{figure}

\begin{figure}[h!]
	\begin{center}
	\includegraphics[trim=5 5 5 5,clip,width=0.48\linewidth]{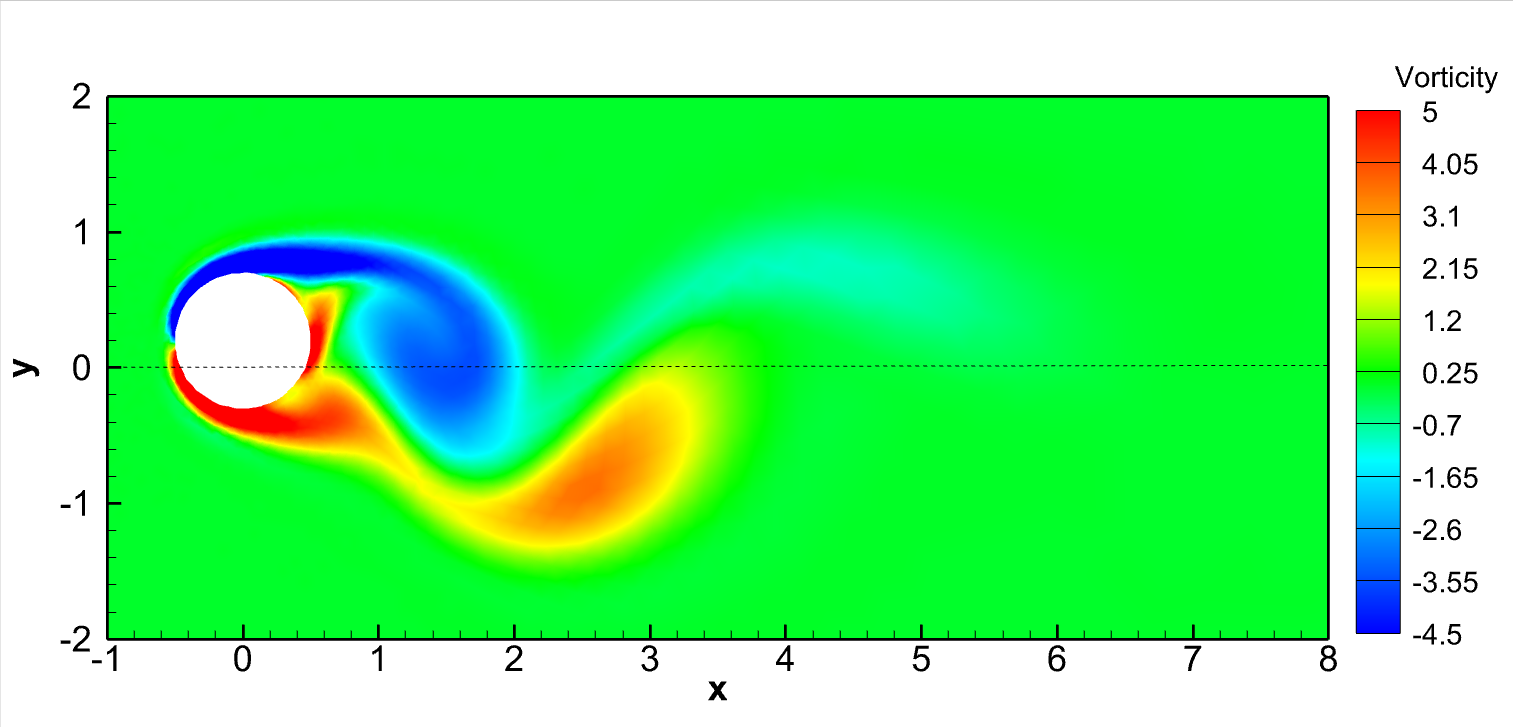}
	\includegraphics[trim=5 5 5 5,clip,width=0.48\linewidth]{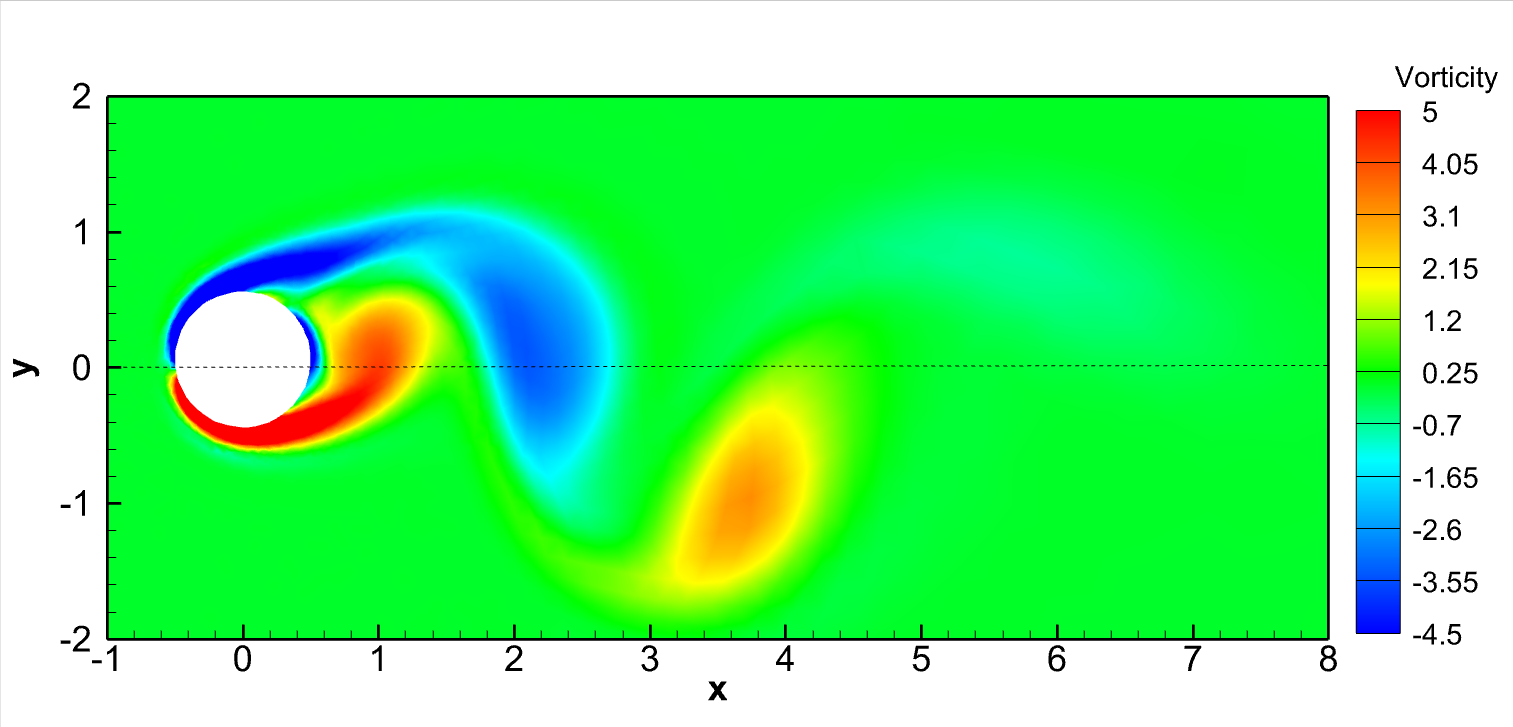}\\%[-2cm]
	\includegraphics[trim=5 5 5 5,clip,width=0.48\linewidth]{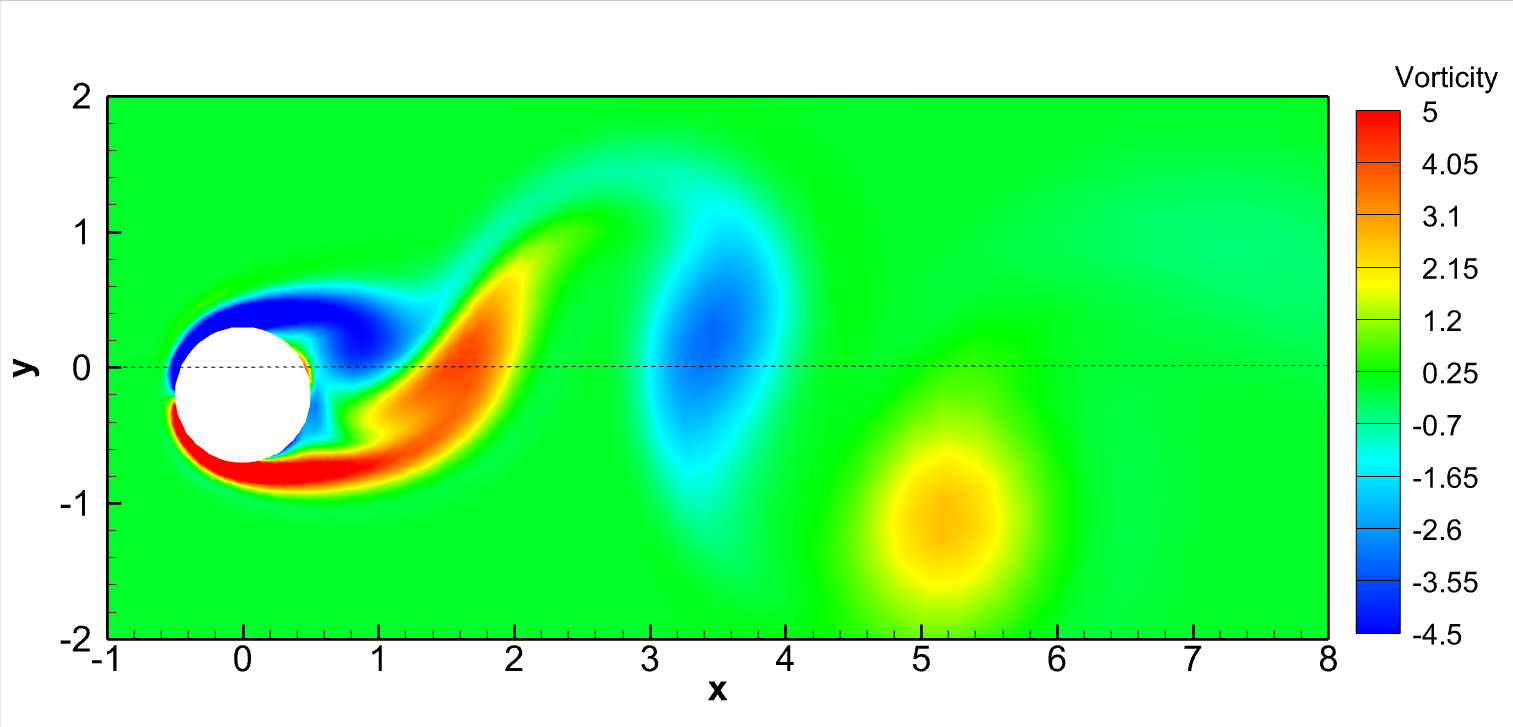}
	\includegraphics[trim=5 5 5 5,clip,width=0.48\linewidth]{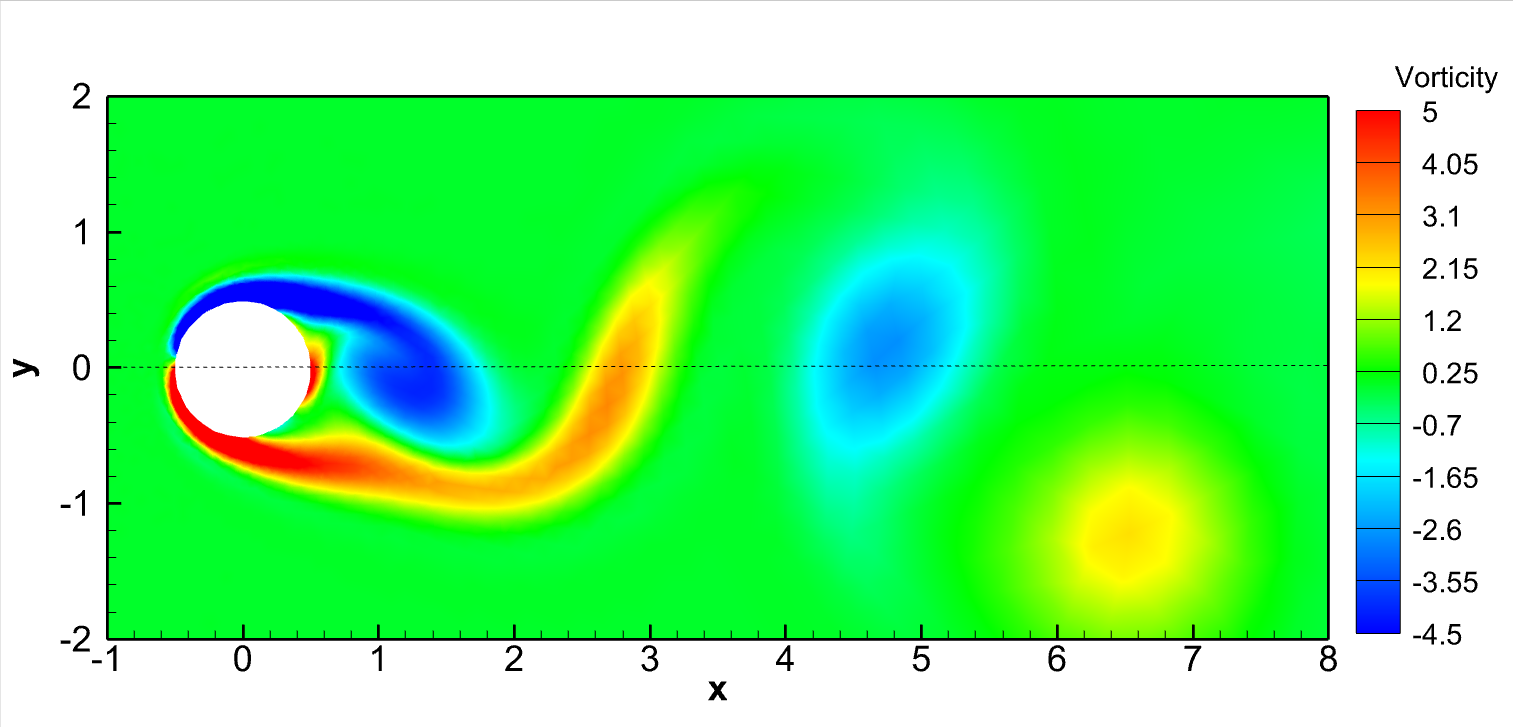}\\[-2cm]
	\end{center}
	\caption{Vorticity field at times $t=8$, $t=9.3$, $t=11.1$, and $t=12.7$, of an oscillating cylinder with $Re=185$, $f=\frac{0.8}{5.13}$.}
	\label{fig:osccyl_vorticity_64}
\end{figure}
Regarding the test with oscillation frequency $f=1.1 f_0$, we expect the lift and drag coefficients to change, from the sinusoidal profile at constant amplitude on time observed previously, to a periodic sinusoidal function with periodical repeated patterns, as it can be observed, e.g. in~\cite{Guilmineau2002,RamirezNogueira2017}. The solution obtained with the proposed ALE hybrid method successfully reproduces the expected behaviour, as it can be observed in Figure~\ref{fig:osccyl_drag_lift_128_f11}, and a pretty good agreement is achieved with respect to numerical results appearing in the literature. 
%To ease comparison, the numerical results are presented together with the solution obtained with an explicit finite volume ALE method for the conservative formulation of the compressible Navier-Stokes equations.
In this test case a mesh convergence study is performed and reported in the right plot of Figure~\ref{fig:osccyl_drag_lift_128_f11}.

\begin{figure}[h!]
	\begin{center}
	\includegraphics[width=0.49\linewidth]{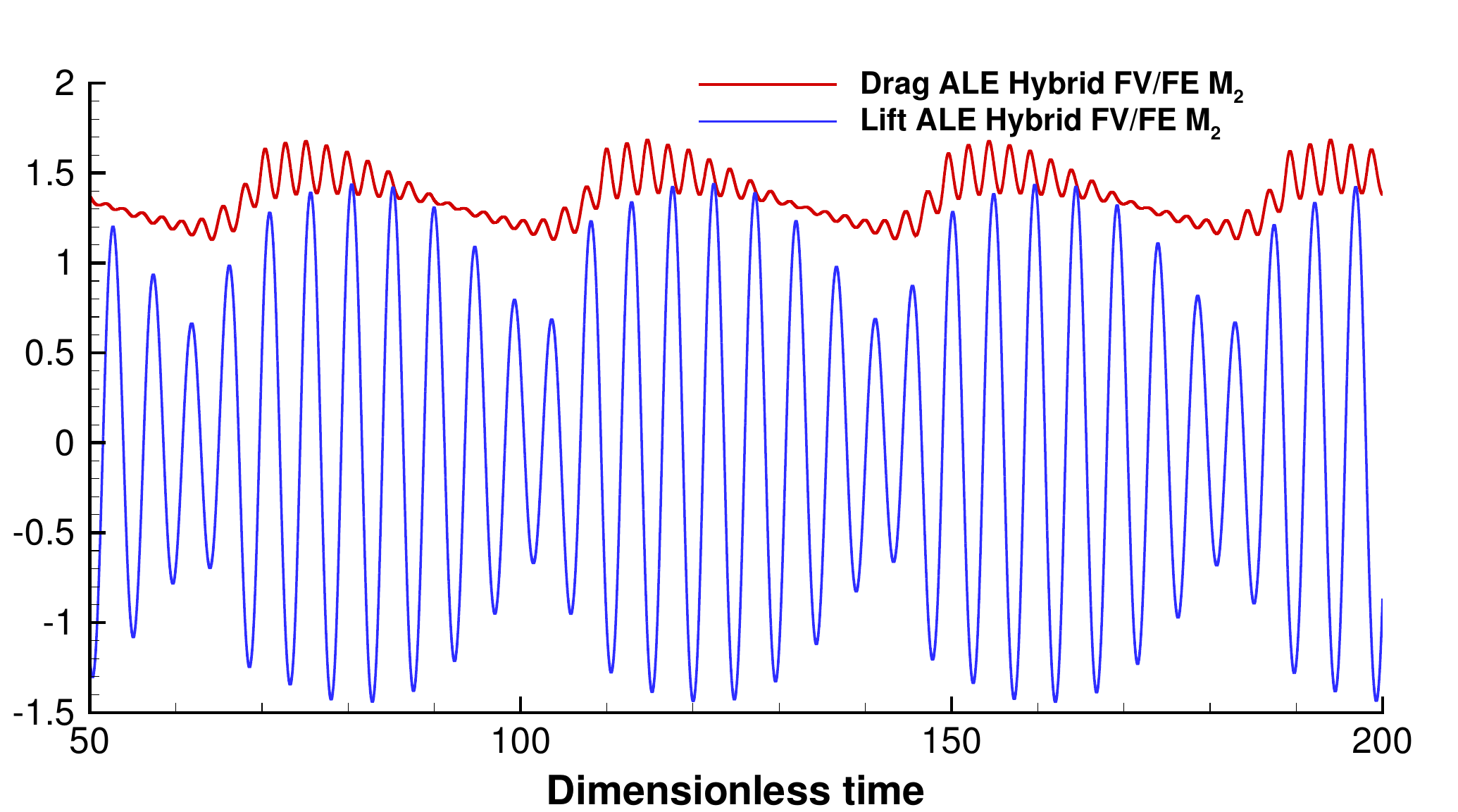}
	\includegraphics[width=0.49\linewidth]{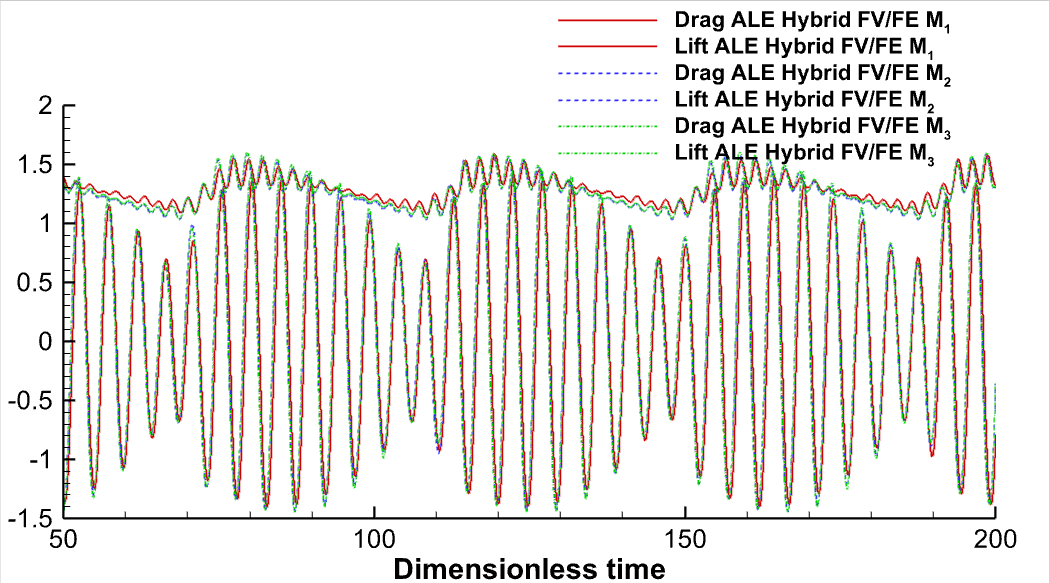}
	\end{center}
	\caption{Drag and lift coefficients of an oscillating cylinder with Re$=185$, $f=\frac{1.1}{5.13}$, obtained with the ALE hybrid scheme (left). Mesh convergence study for the drag and lift coefficients (right).}
	\label{fig:osccyl_drag_lift_128_f11}
\end{figure}

\subsection{Viscous flow around a moving ellipse}

Following~\cite{Wang2000,Wang2005}, we study a flapping flight, characteristic for dragonflies, by simulating {an incompressible} viscous flow around an elliptic wing section. In particular, the wing cross section in the chord direction is represented by an ellipse with chord length $c=0.025$ and thickness $0.003125$, i.e. a thickness ratio of 0.125, and the flapping velocity is given by
\begin{equation*}
u_f(t)=2\pi f A \sin(2\pi f t),
%\label{eqn.flapping_velocity}
\end{equation*}
where $f$ and $A$ are the flapping frequency and amplitude, respectively. For this test case, and following~\cite{Wang2000}, we consider two Strouhal numbers, one involving the flapping amplitude, $\St_{A}$, and other related to the chord length, $\St_c$. In the context of the insect flight, the first number represents the advance ratio while the second one measures the flapping frequency and are calculated as 
\begin{equation*}
\St_{A}=f\frac{A}{u_0},\qquad \qquad
\St_c=f\frac{c}{u_0}.
\end{equation*}
For the numerical simulation, the domain is defined by an outer cylinder, of radius $r=0.0625$, and an embedded ellipse, with chord length $c=0.025$ and thickness $0.003125$, both centered at $\x=\mathbf{0}$. The domain is discretized with 14076 triangular primal elements.
The mesh is moved according to the boundary velocity imposed on the ellipse, while the outer boundaries are not moving. In the interior of the domain, the mesh velocity is obtained via the solution of the Laplace equation. A detail of the moving mesh around the ellipse at time $t=0$ and $t=0.06$ can be seen in Figure~\ref{fig:ellipse_mesh}.
\begin{figure}
	\begin{center}
	\includegraphics[trim=5 5 5 5,clip,width=0.49\linewidth]{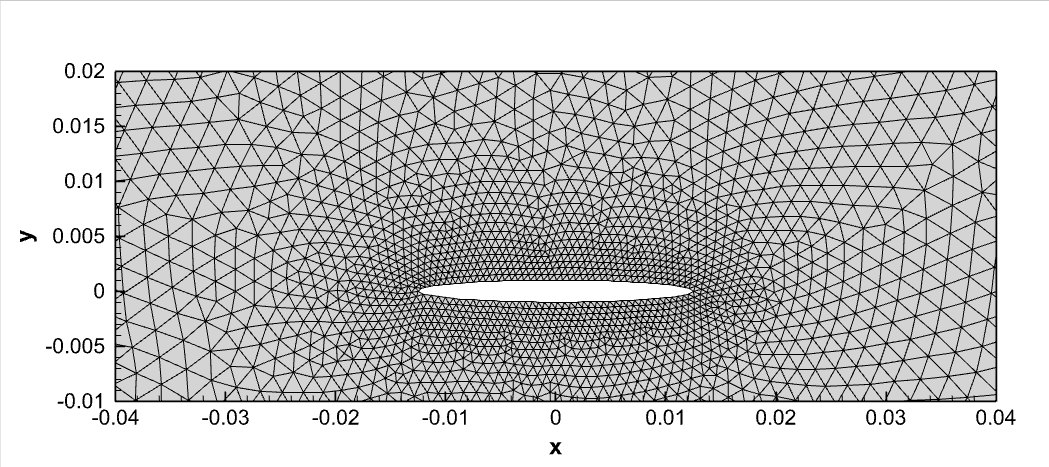}
	\includegraphics[trim=5 5 5 5,clip,width=0.49\linewidth]{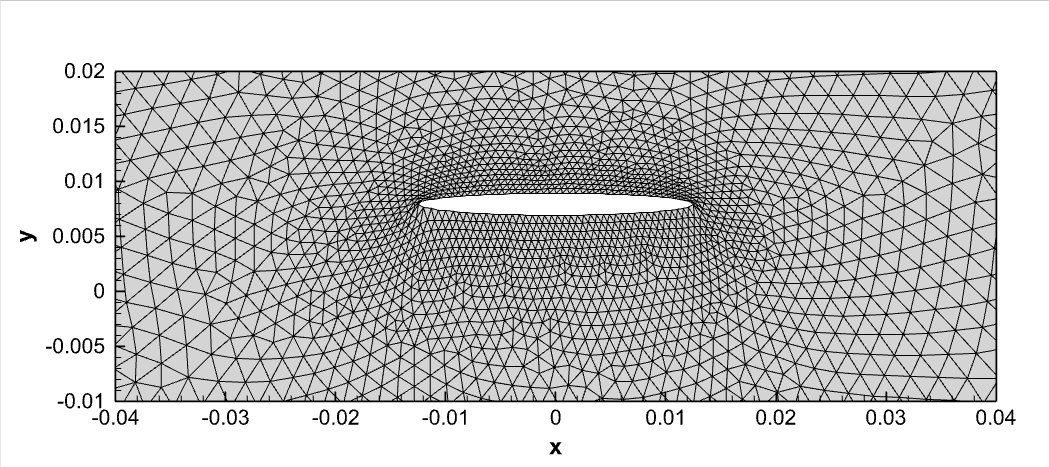}
	\end{center}
	\caption{Detail of the moving mesh at time $t=0$ (left) and $t=0.06$ (right) of the flow around a moving ellipse test case.}
	\label{fig:ellipse_mesh}
\end{figure}
The initial velocity and the density are $u_0=1$ and $\rho=1$, respectively, and the initial pressure is given by $p=0$. {At} the outer boundary a pressure outlet condition is considered and around the ellipse a viscous wall boundary is imposed. To simulate a dragonfly flight, we set $Re=1000$, the flapping frequency $f=40$ and the amplitude $A=4\cdot 10^{-3}$, to obtain the characteristic Strouhal numbers $\St_A=0.16$ and $\St_c=1$. The laminar viscosity of the fluid is set to $\mu= 2.5\cdot 10^{-5}$. In Figure~\ref{fig:ellipse_drag_lift}, we plot the drag and lift coefficients obtained for the moving ellipse. These values present a good agreement with those reported in~\cite{Wang2000}. To get an idea of the generated flow pattern and the vortex shedding, the vorticity contours at times $t=0.08$, $t=0.11$, $t=0.16$, and $t=0.2$ are shown in Figure~\ref{fig:ellipse_vorticity}.

\begin{figure}[h!]
	\begin{center}
	\includegraphics[width=0.75\linewidth]{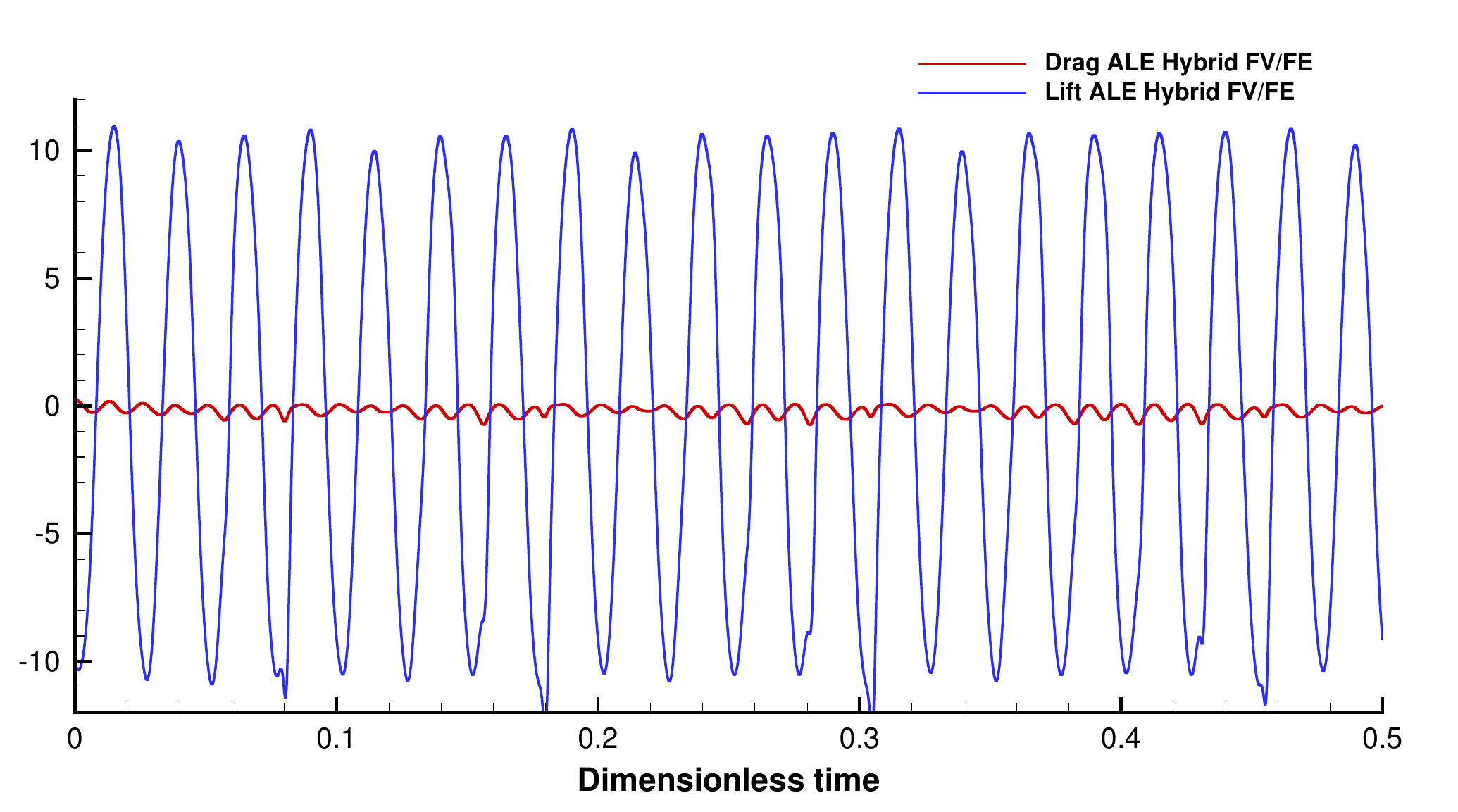}
	\end{center}
	\caption{Drag and lift coefficients of a moving ellipse}
	\label{fig:ellipse_drag_lift}
\end{figure}

\begin{figure}[h!]
	\begin{center}
	\includegraphics[trim=5 5 5 5,clip,width=0.49\linewidth]{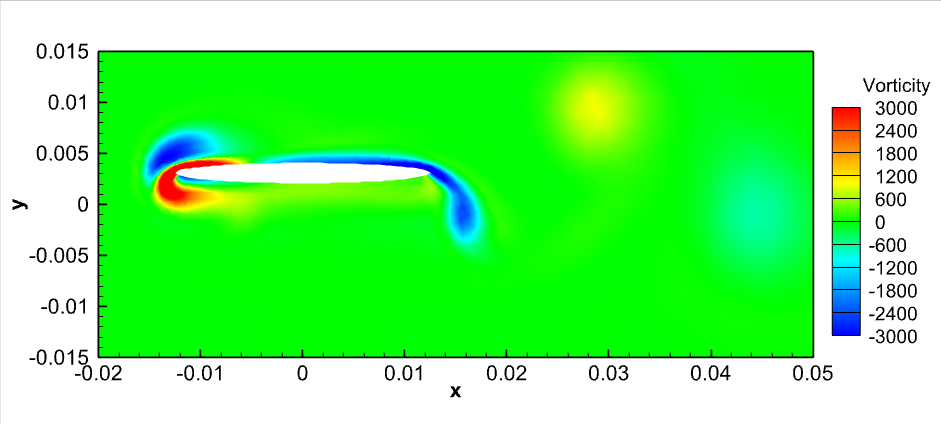}
	\includegraphics[trim=5 5 5 5,clip,width=0.49\linewidth]{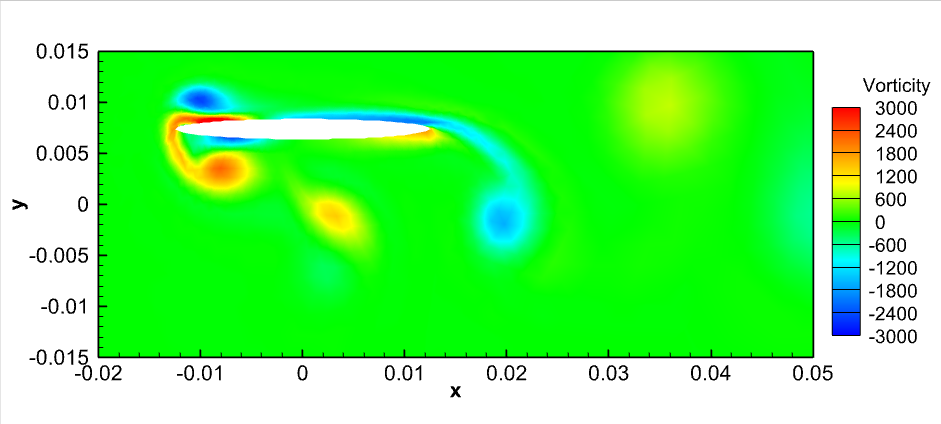}\\
	\includegraphics[trim=5 5 5 5,clip,width=0.49\linewidth]{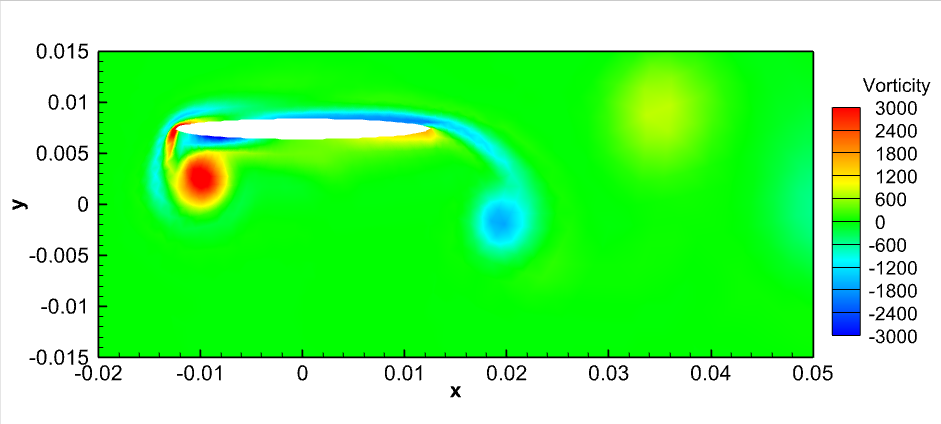}
	\includegraphics[trim=5 5 5 5,clip,width=0.49\linewidth]{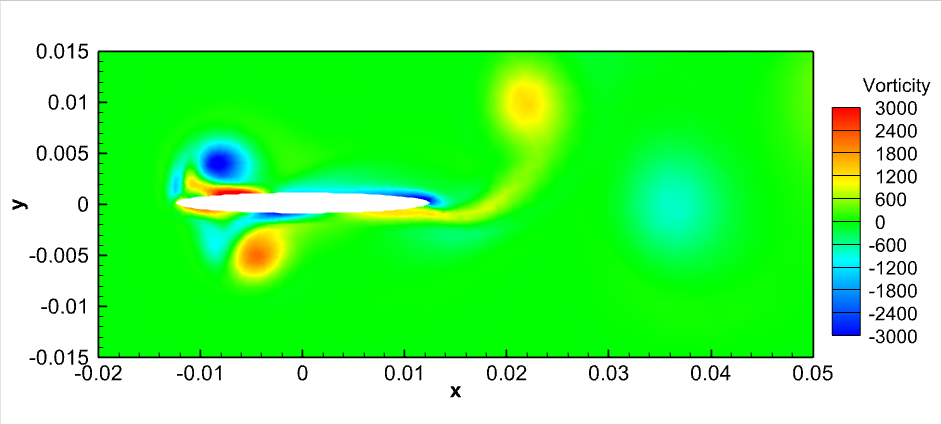}
	\end{center}
	\caption{Contour plots of the vorticity at times $t=0.08$, $t=0.11$, $t=0.16$, and $t=0.2$, of the viscous flow around a moving ellipse test.}
	\label{fig:ellipse_vorticity}
\end{figure}

\subsection{Rising bubble}
This test case aims at assessing the capability of the proposed method to deal with natural convection problems.
Following \cite{MBGW13,YMLGW14,BLY17,Yi18,BTBD20,BBDFSVC20}, we consider a closed domain  $\Omega=[0,2]\times[0,3]$ and define a heated bubble embedded in an initial fluid at rest:
\begin{equation*}
	\rho\left(\x,0\right)=1,\qquad \mathbf{u} \left(\x,0\right) = 0,\qquad \press \left(\x,0\right) = 10^{5}+ yg_{2}, \qquad
	\T\left(\x,0\right) =  \left\lbrace \begin{array}{lr}
		T_0+0.5 e^{-\frac{1}{0.08} r^2} & \mathrm{ if } \; r^{2} \le 0.2,\\[10pt]
			T_0 & \mathrm{ otherwise},
	\end{array}\right.
\end{equation*} 
with $r$ the distance to the centre of the bubble located at $\x_b=(1,1)$. The left and right boundaries are assumed to be fixed walls while {at} the top and bottom boundaries we set Dirichlet boundary conditions for all variables. %weak
The mesh is moved with a smoothed version of the local fluid velocity, setting  $\varsigma= 10$. 

Firstly, we compare the solution obtained with the ALE scheme  against the Eulerian hybrid FV/FE method for a viscosity of $\mu=2\cdot 10^{-4}$ and which has already been validated in the framework of natural convection problems in \cite{BBDFSVC20}. {Here, both methods are applied to the incompressible Navier-Stokes equations coupled with the temperature transport equation through the Boussinesq assumption. For a comparison between this approach and the solution of the compressible Navier-Stokes equations see \cite{BTBD20,BBDFSVC20}}. The temperature contour plot at times $t=\left\lbrace 0.25, 0.5, 0.75\right\rbrace$ together with 1D cuts of the velocity and temperature field are reported in Figure~\ref{fig.bubblecomparative}, showing an excellent agreement between both methods. 

As second test, we run a simulation with $\mu=0$  using the ALE hybrid FV/FE scheme. The results presented in Figure~\ref{fig.bubblemu0} for $t=0.75$ show the elevation of the heated bubble and the corresponding mesh deformation. Moreover, we start seeing the formation of the classical Kelvin-Helmholtz instabilities on the bubble surface, as expected for a zero viscosity rising bubble, \cite{BTBD20}.

\begin{figure}
	\begin{center}
		\includegraphics[width=0.29\linewidth]{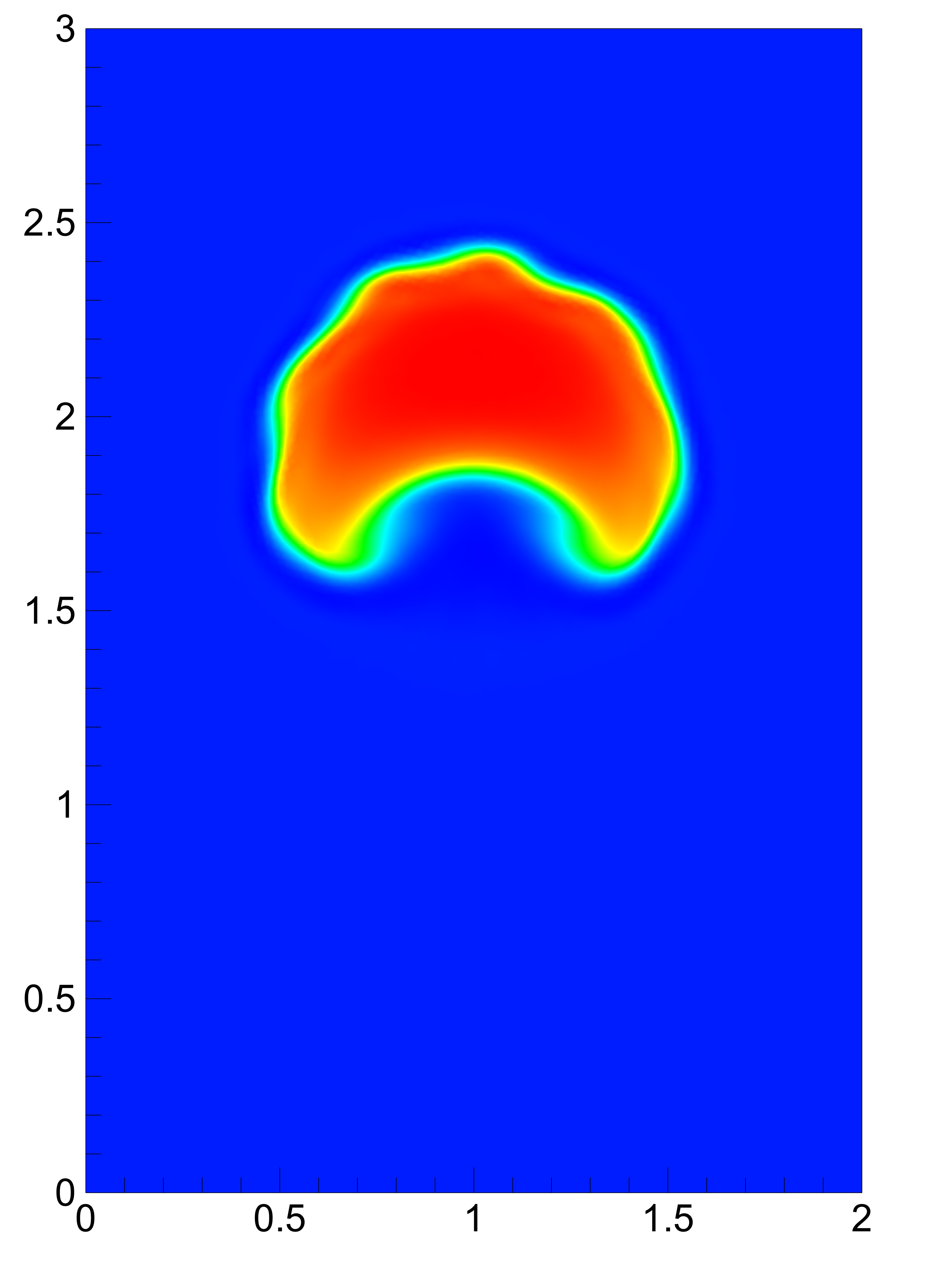}\hfill
		\includegraphics[width=0.29\linewidth]{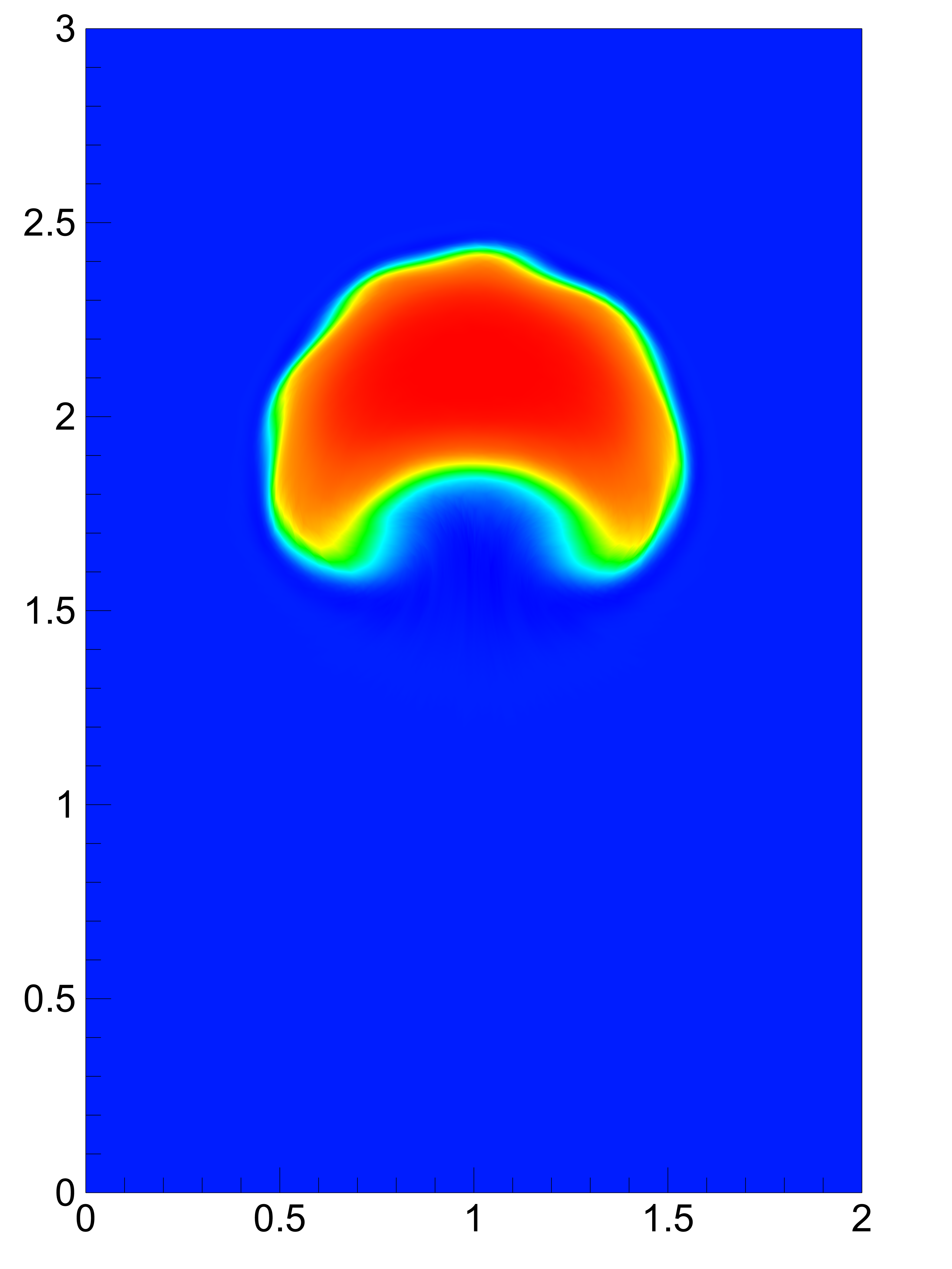}\hfill
		\includegraphics[width=0.29\linewidth]{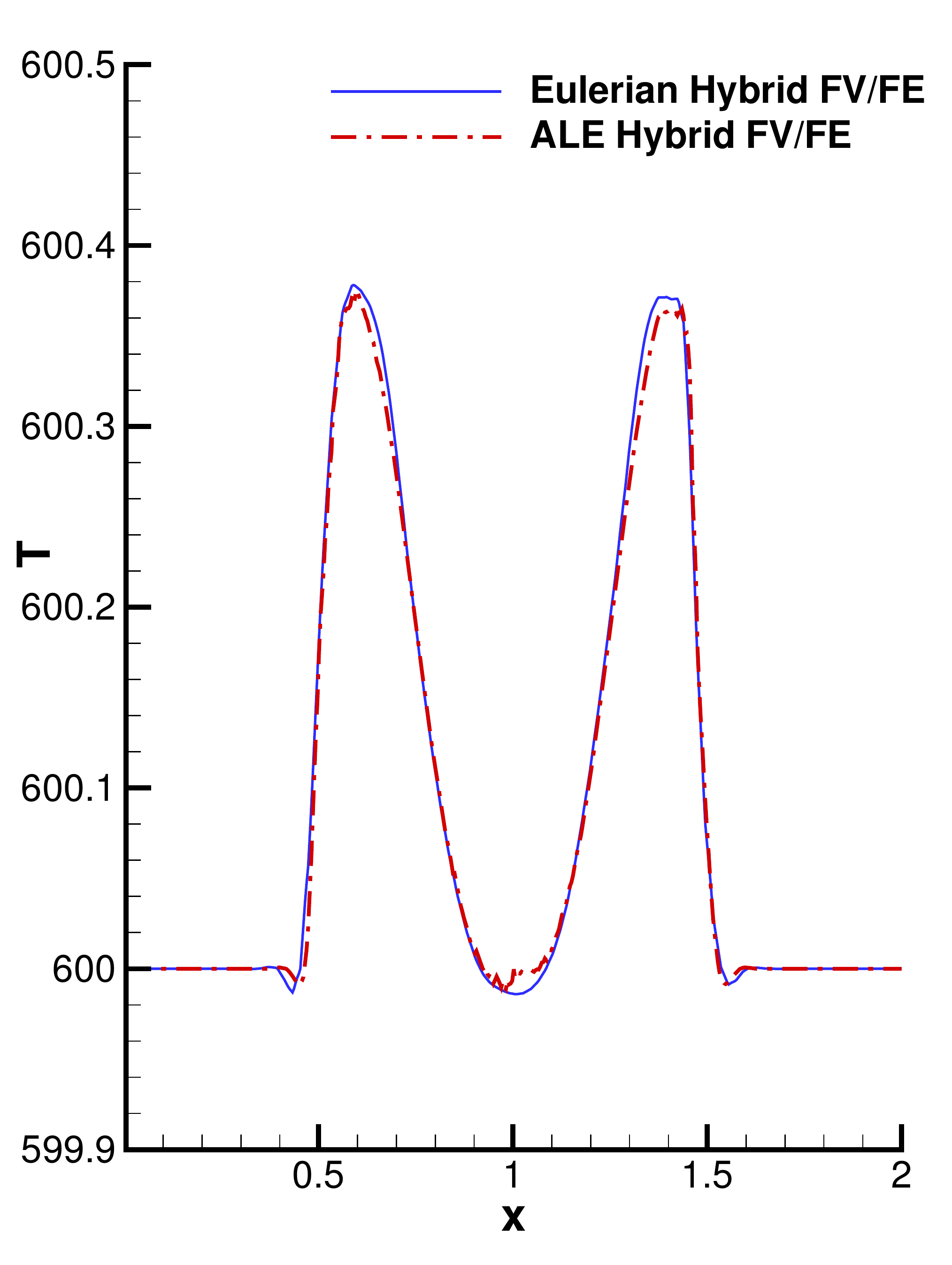}
		
		\vspace{0.2cm}		
		\includegraphics[width=0.29\linewidth]{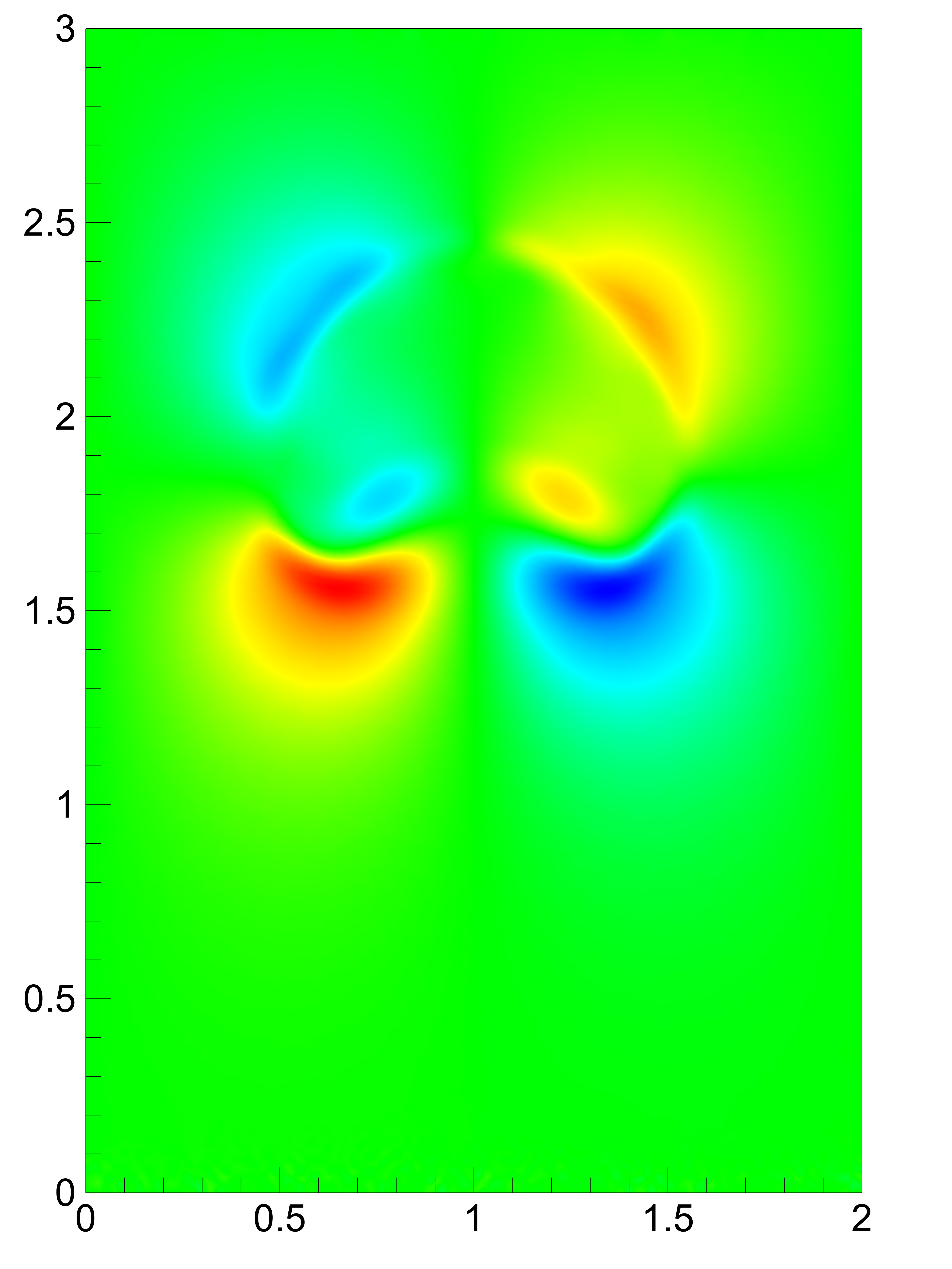}\hfill
		\includegraphics[width=0.29\linewidth]{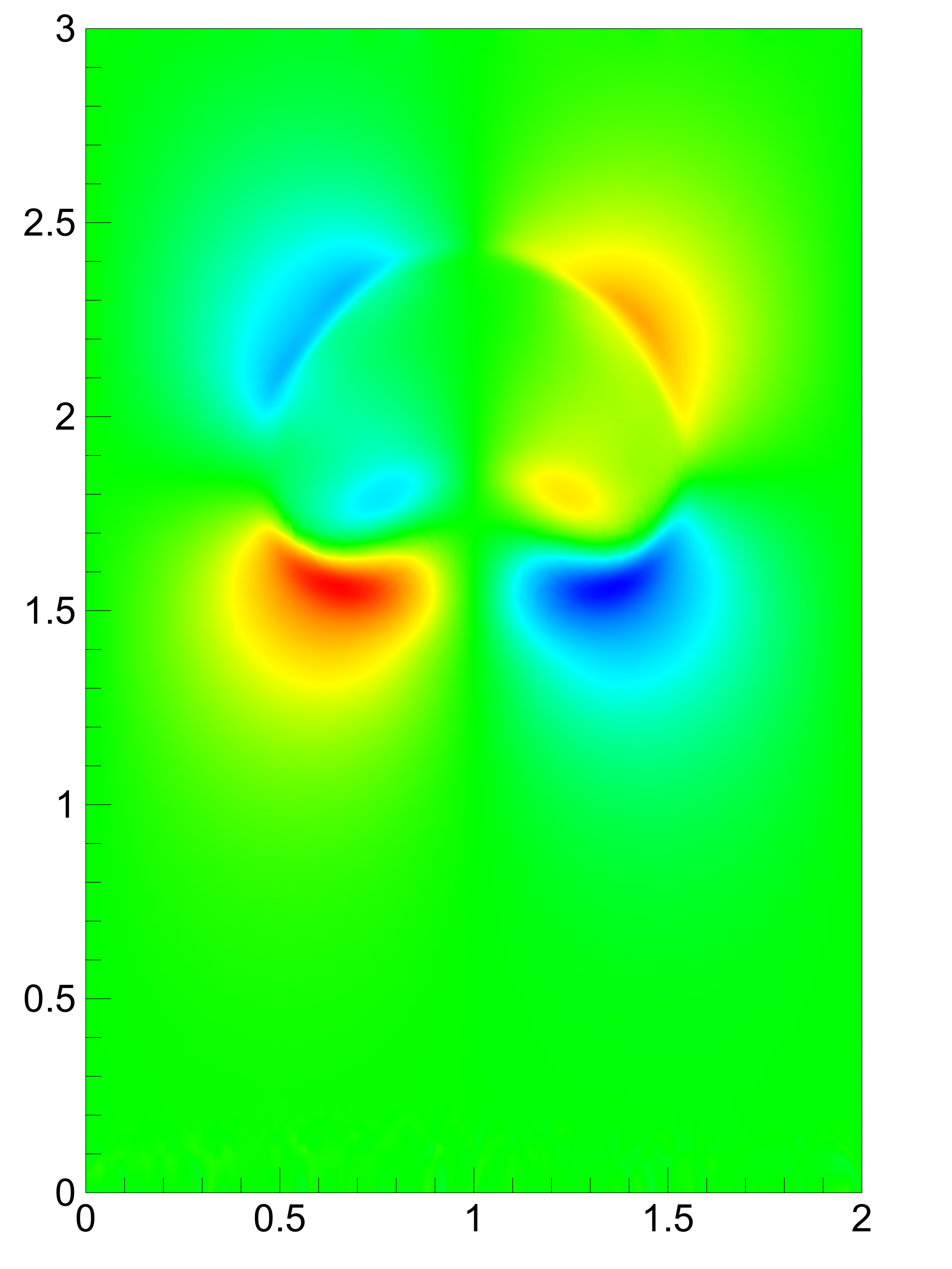}\hfill
		\includegraphics[width=0.29\linewidth]{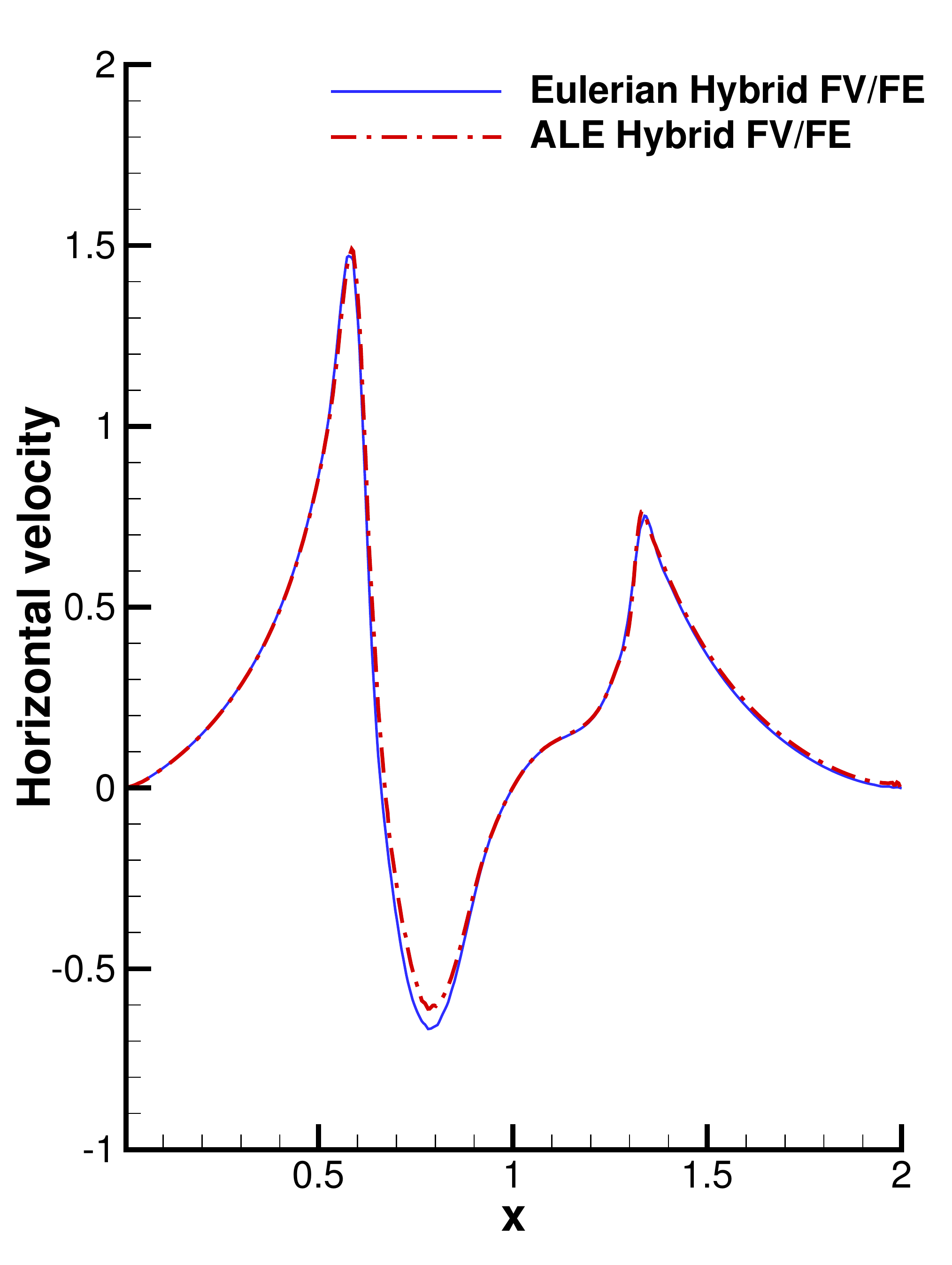}
		
		\vspace{0.2cm}
		\includegraphics[width=0.29\linewidth]{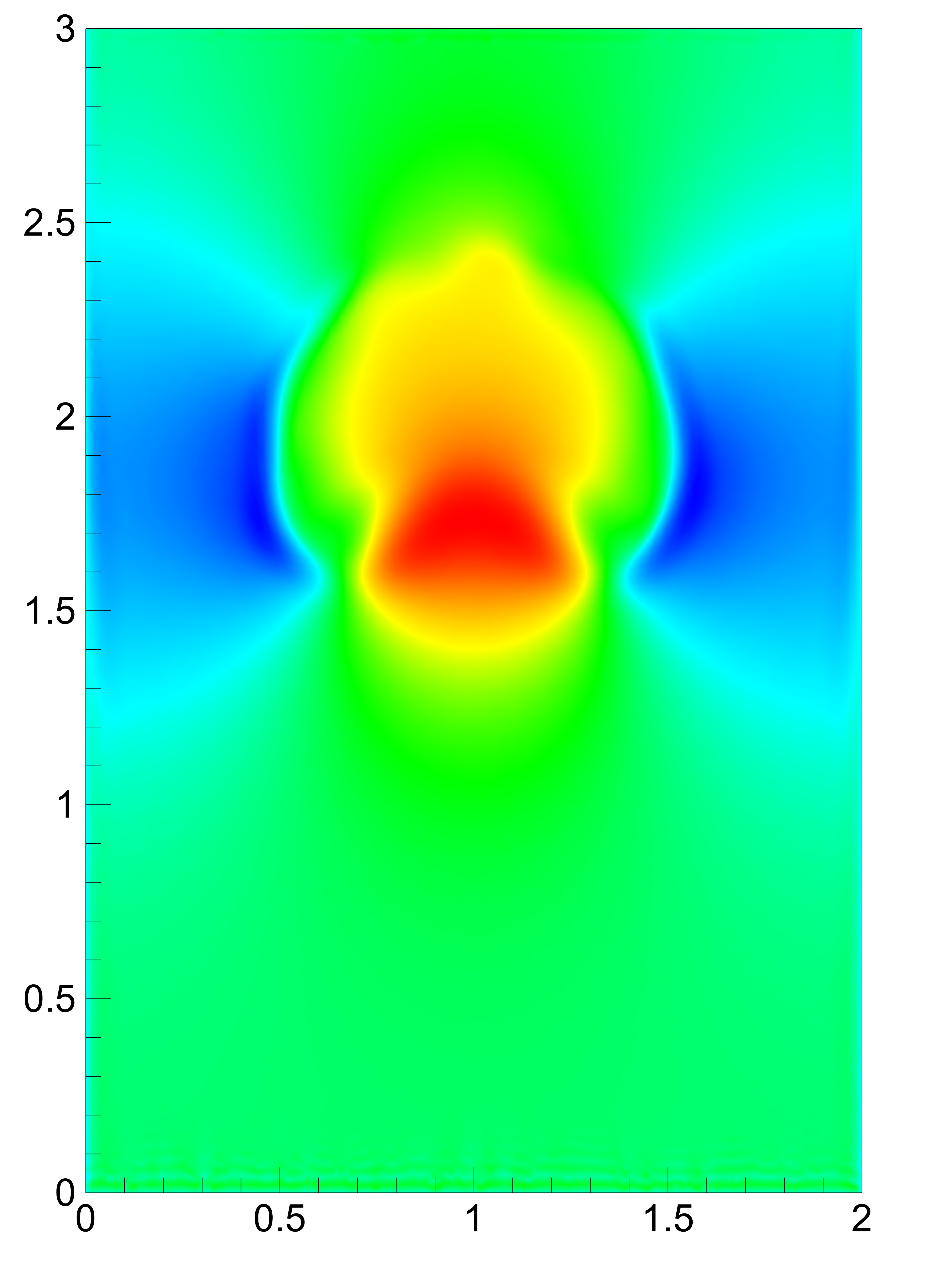}\hfill
		\includegraphics[width=0.29\linewidth]{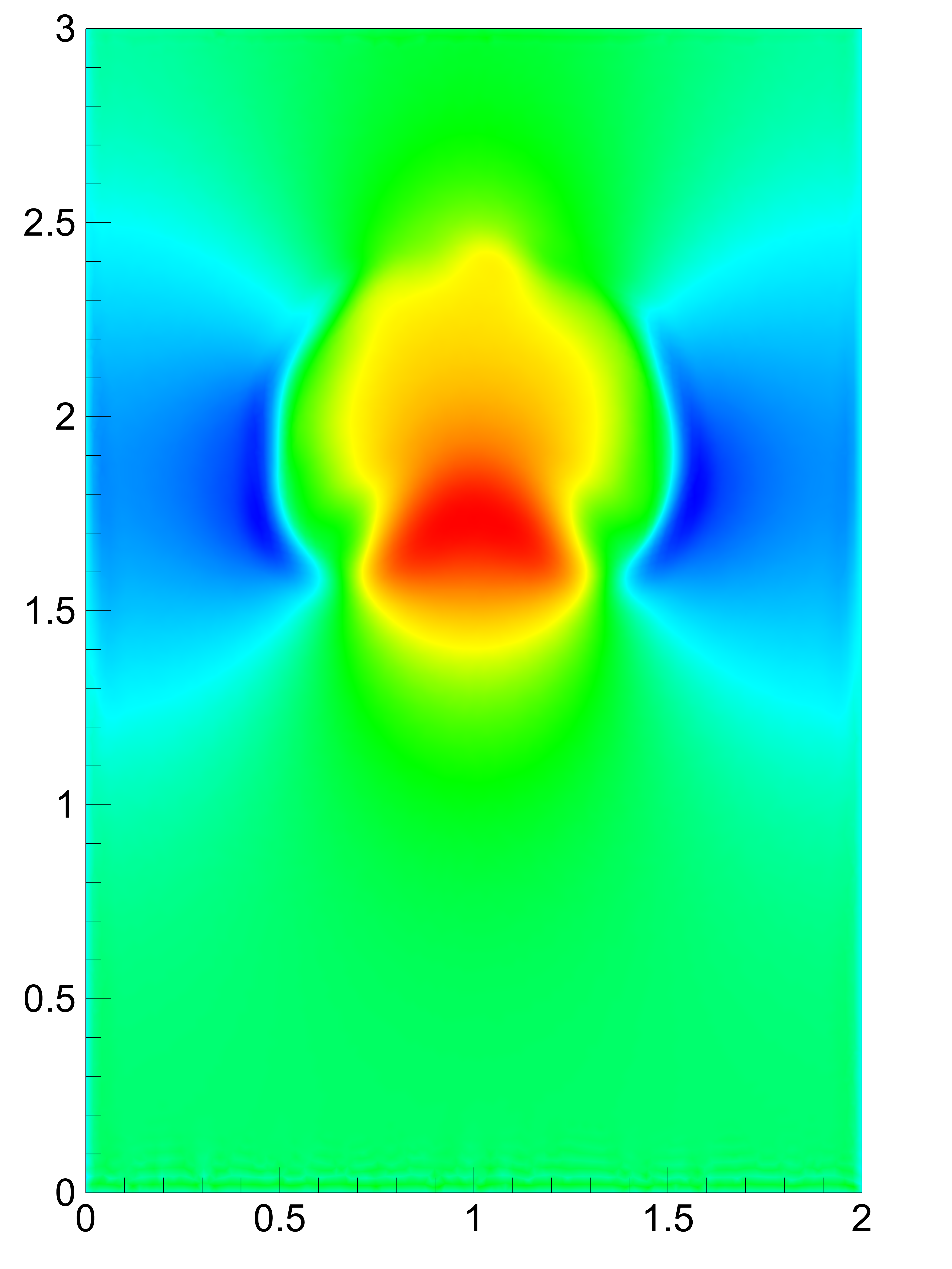}\hfill
		\includegraphics[width=0.29\linewidth]{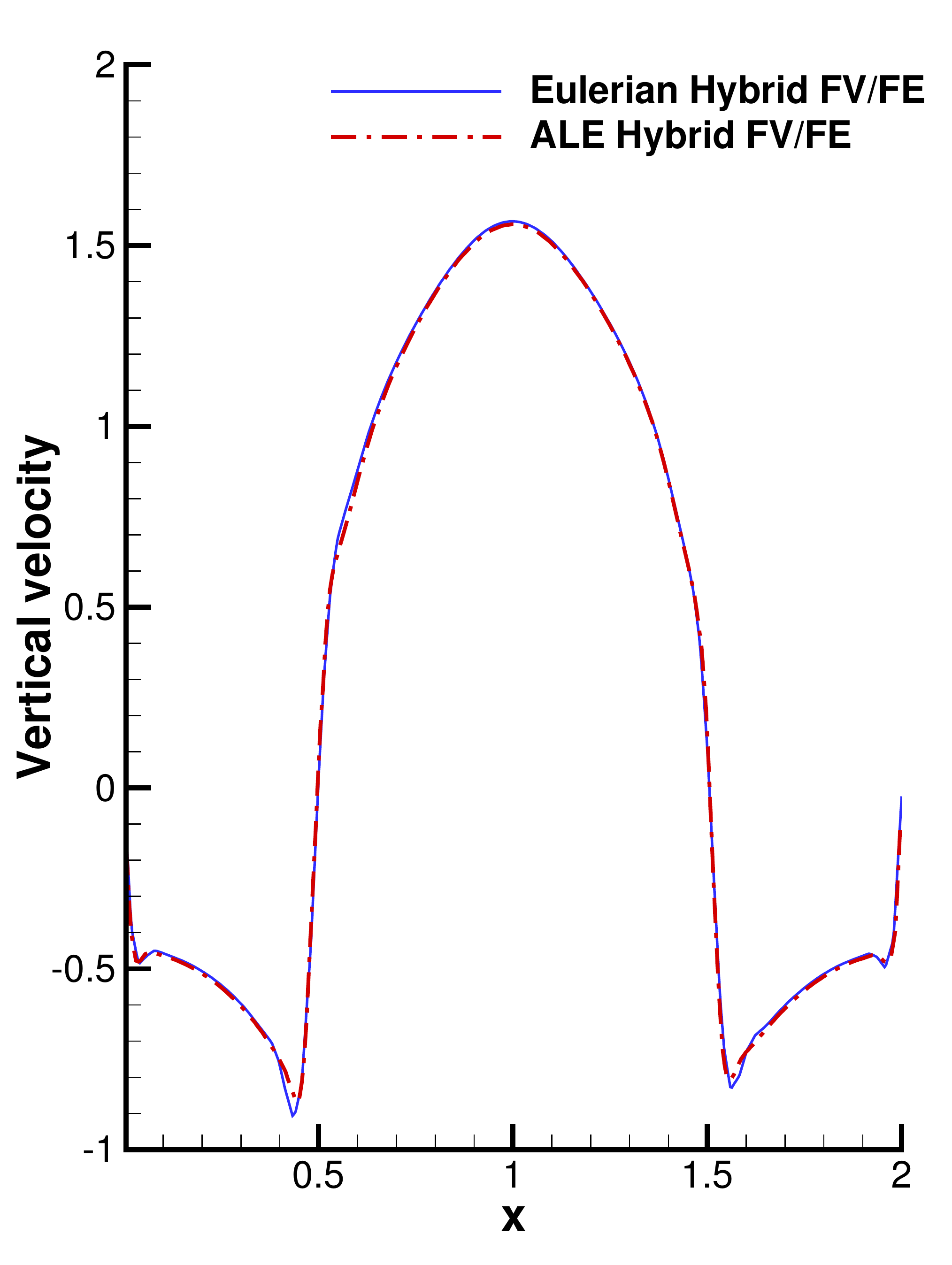}
	\end{center}
	\caption{Comparison of the numerical results obtained with the ALE and the Eulerian hybrid FV/FE schemes at $t=0.75$. The left and middle columns contain the contour plots of the temperature, horizontal and vertical velocity for the ALE with $\varsigma= 10$, $c_{\alpha}=0.5$, and the Eulerian hybrid FV/FE schemes, respectively. The right column shows the 1D profile over the cut $y=1.7$ for the temperature, $y=1+x$ for the horizontal velocity and  $y=2$ for the vertical velocity. From top to bottom rows: temperature, horizontal velocity and vertical velocity. }\label{fig.bubblecomparative}
\end{figure}

\begin{figure}
	\begin{center}
		\includegraphics[trim=5 5 5 5,clip,width=0.485\linewidth]{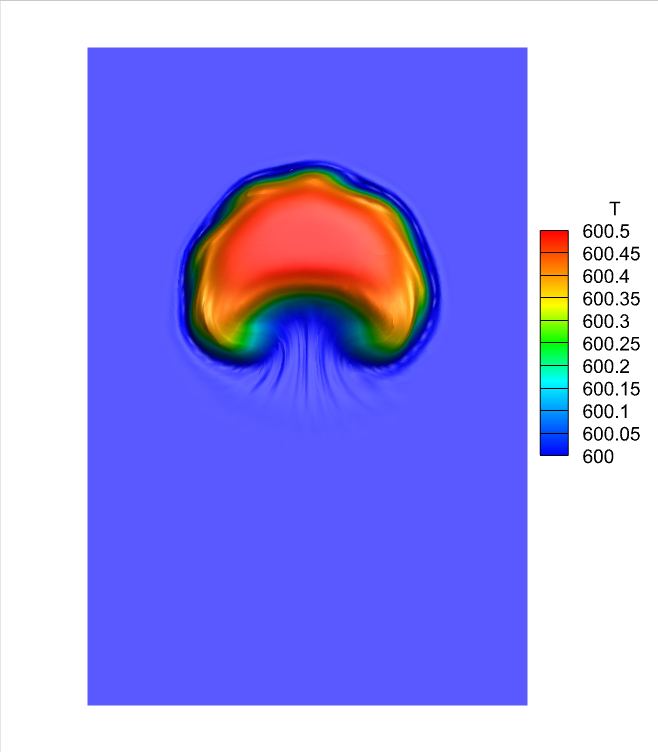}	\hfill
		\includegraphics[width=0.48\linewidth]{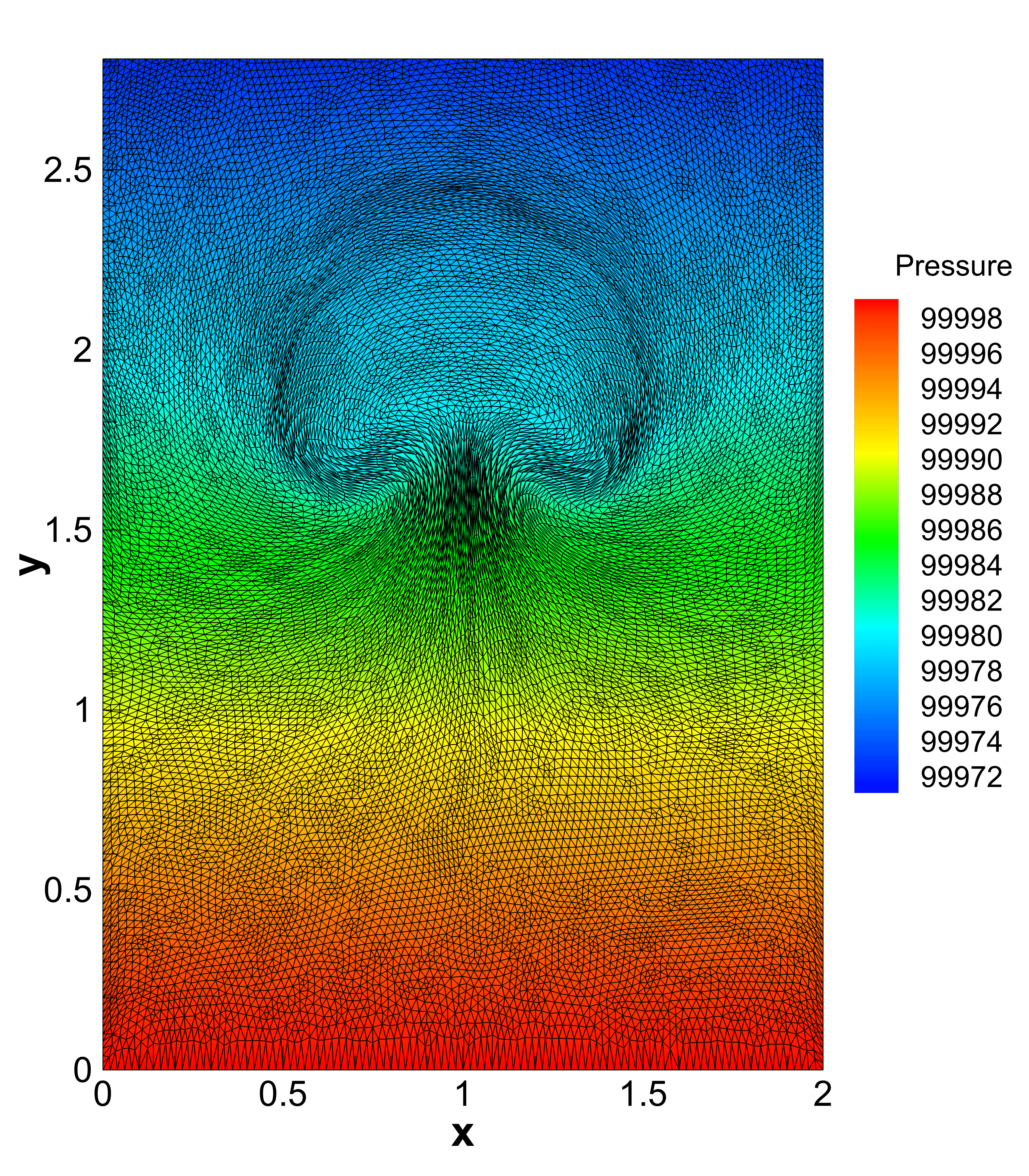}
	\end{center}
	\caption{Temperature contour plot and moving mesh coloured by the pressure field obtained solving the rising bubble test case with $\mu=0$ using the ALE hybrid FV/FE scheme with $\varsigma=10$, $c_{\alpha}=0.5$; $t=0.75$. }\label{fig.bubblemu0}
\end{figure}

\subsection{Dambreak}
We now consider {an incompressible} dambreak test case for $\mu=10^{-3}$ proposed in \cite{BCDGP20,Ferrari2021}. The initial fluid at rest  fills the region $\Omega=[0,2] \times [0,1]$ and the {upper} and right boundaries are assumed to disappear leading to a dambreak. To simulate this situation, pressure outlet boundary conditions with zero pressure are defined in the upper and right surfaces of the fluid, while the left and bottom boundaries are assumed to be fixed inviscid walls with zero velocity. The simulation is run in parallel, \cite{HybridMPI}, using MPI on 64 CPU cores of an AMD Ryzen Threadripper 3990X workstation until $t=0.5$ on a mesh made of $65536$ primal elements. To avoid a fast degeneration of elements in the free surface, the regularization parameter is set to $\varsigma=1$ and the results are depicted in Figure~\ref{fig:dambreak}. A good agreement is observed with the reference solution computed solving the Godunov-Peshkov-Romenski (GPR) model using a high order ADER DG scheme on a fixed mesh \cite{BCDGP20}. {To illustrate the mesh motion, the initial and final configuration of the MPI zones is depicted in Figure \ref{fig:dambreak_mpi}.}

\begin{figure}[h]
	\begin{center}
	\includegraphics[trim=5 5 5 5,clip,width=0.49\linewidth]{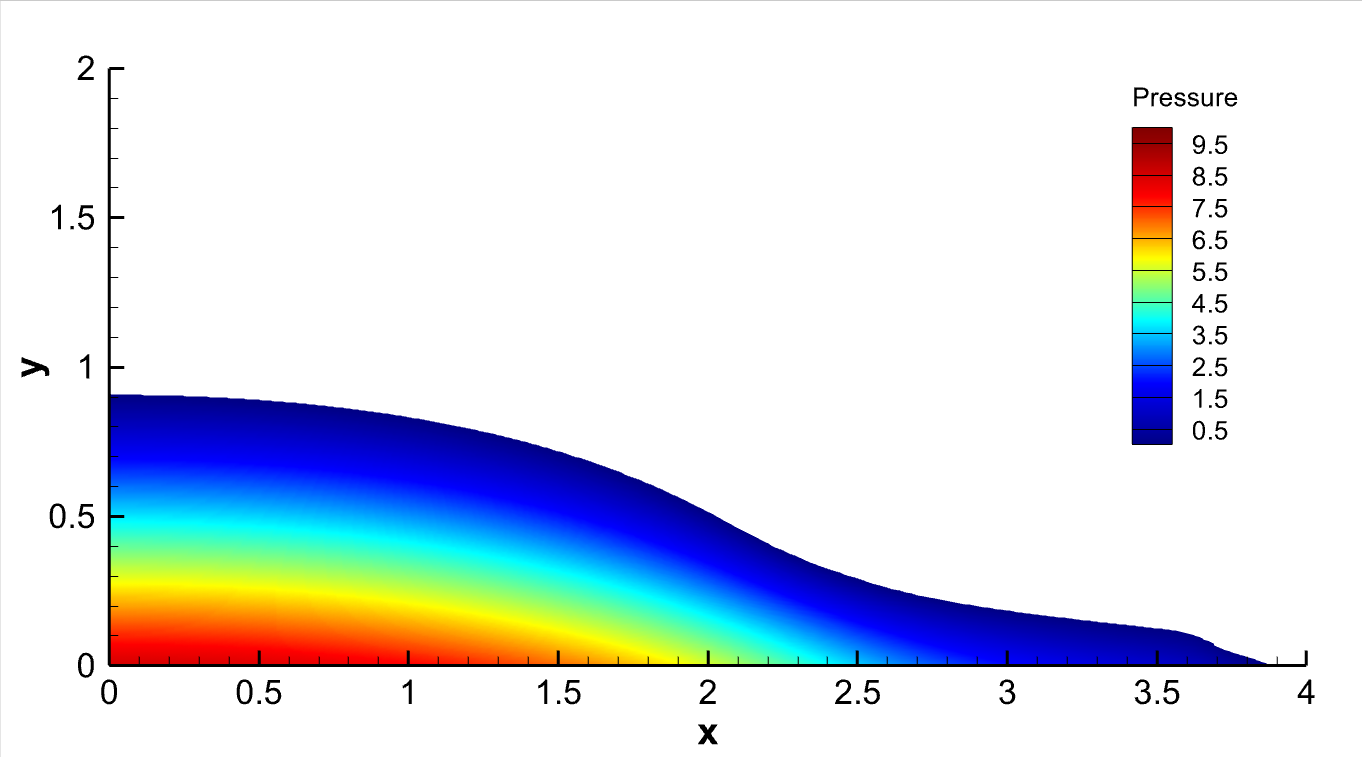}
	\includegraphics[width=0.49\linewidth]{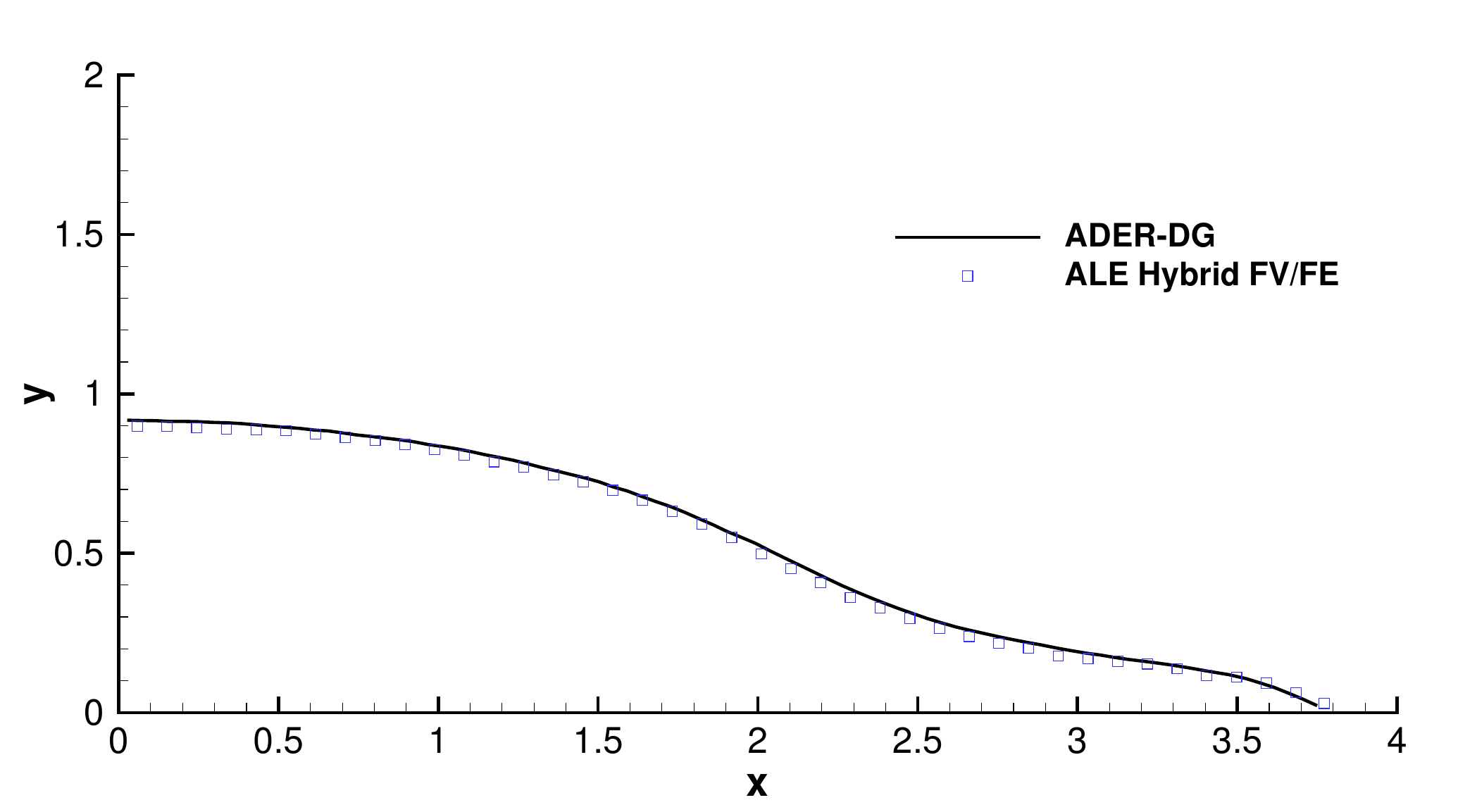}
	\end{center}
	\caption{Dambreak test case. Left: position of the fluid coloured by the pressure field at $t=0.5$. Right: free surface elevation of the dam break test case obtained at $t=0.5$ using the Hybrid FV/FE (blue squares) and with a fourth order ADER-DG scheme, \cite{BCDGP20} (solid black line). }
	\label{fig:dambreak}
\end{figure}

\begin{figure}[h]
	\begin{center}
		\includegraphics[trim=5 5 5 5,clip,width=0.49\linewidth]{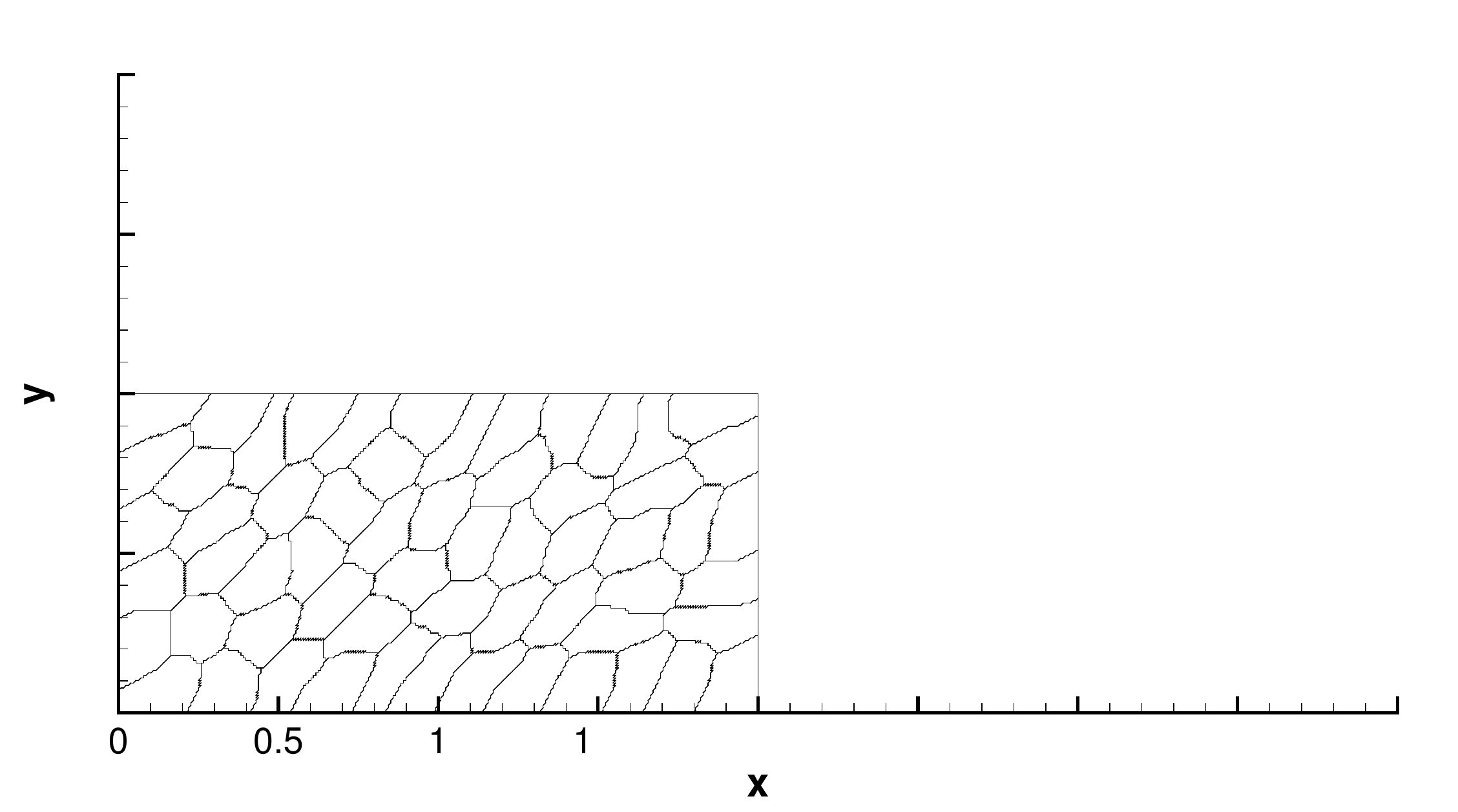}
		\includegraphics[trim=5 5 5 5,clip,width=0.49\linewidth]{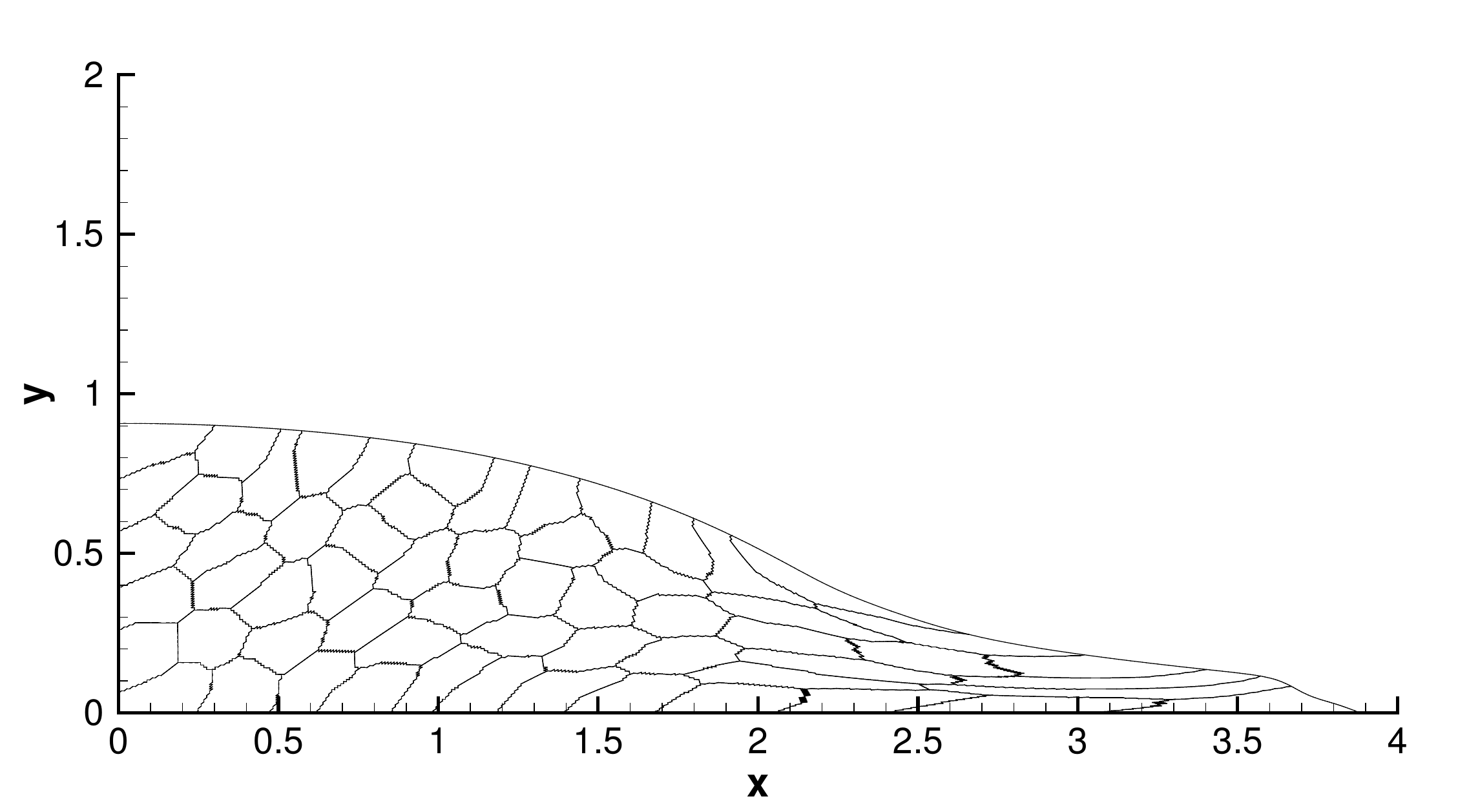}
	\end{center}
	\caption{{Distribution of the domain onto 64 MPI ranks for the dam break test case. Left: $t=0$. Right: $t=0.75$. }}
	\label{fig:dambreak_mpi}
\end{figure}

\subsection{Sloshing}

In this section, we simulate the phenomenon of sloshing of an incompressible inviscid fluid in a moving tank. Viscosity is neglected, i.e. $\mu=0$. We use the setup detailed in \cite{LagrangeNC} and references therein. The tank has a length of $L=1.73$ and the initial water height is $H_0=0.6$. The fluid inside the tank is initially at rest and the initial pressure distribution is hydrostatic. The gravity vector is set to $\mathbf{g}=(0,-9.81)$. The boundary conditions are as follows: we impose a free surface pressure boundary condition with $p=0$ at the free surface on the top and moving slip wall boundary conditions on the left, right and bottom. The horizontal velocity of the tank walls is prescribed according to the law $u_1(t)= - \omega A \sin(\omega t)$ with $A=0.032$, $\omega = 2 \pi / T$ and $T=1.3$. Inside the tank the fluid moves with the regularized local fluid velocity, where the regularization parameter in the parabolic mesh smoothing equation is chosen as $\varsigma = 10^3$. Simulations are run until $t=9$ on a mesh composed of 4256 triangular elements with an initial mesh spacing of $h=0.02276$. In Figure \ref{fig:sloshing2d}, we show the free surface elevation and the pressure contours for several time instants, while in Figure 
\ref{fig:sloshingseries}, a time series of the relative free surface elevation $H-H_0$ of a tracer point initially located in $\mathbf{x}=(0.05,0.6)$ is depicted. Figure \ref{fig:sloshingseries} also contains a comparison with experimental data \cite{FaltinsenTimokha2000} and a third order ADER WENO ALE scheme applied to a simplified diffuse interface model of weakly compressible free surface flows on moving meshes, see \cite{LagrangeNC}. 

\begin{figure}
	\begin{center}
		\includegraphics[trim=8 8 8 8,clip,width=0.65\linewidth]{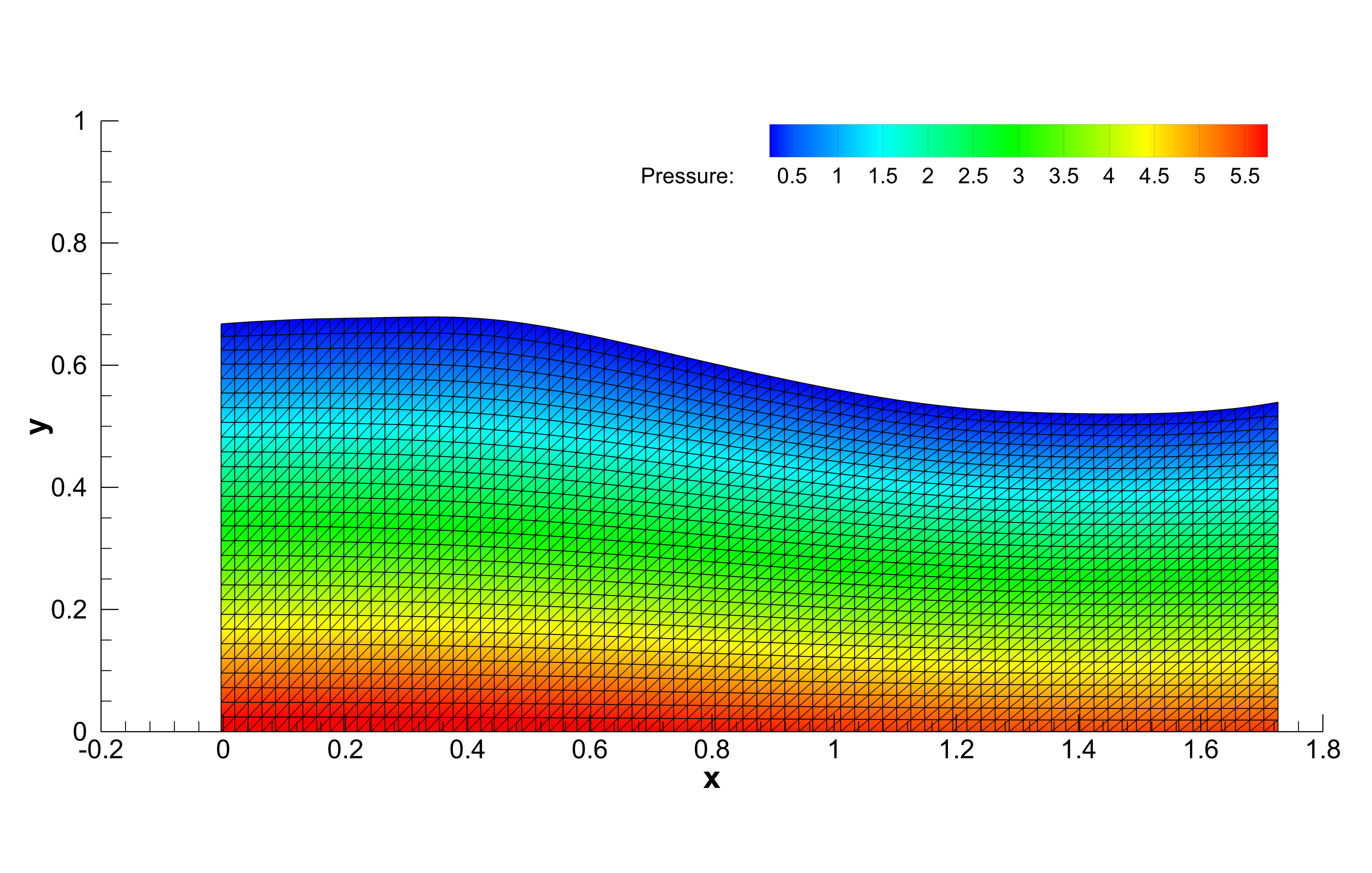} \\ 
		\includegraphics[trim=8 8 8 8,clip,width=0.65\linewidth]{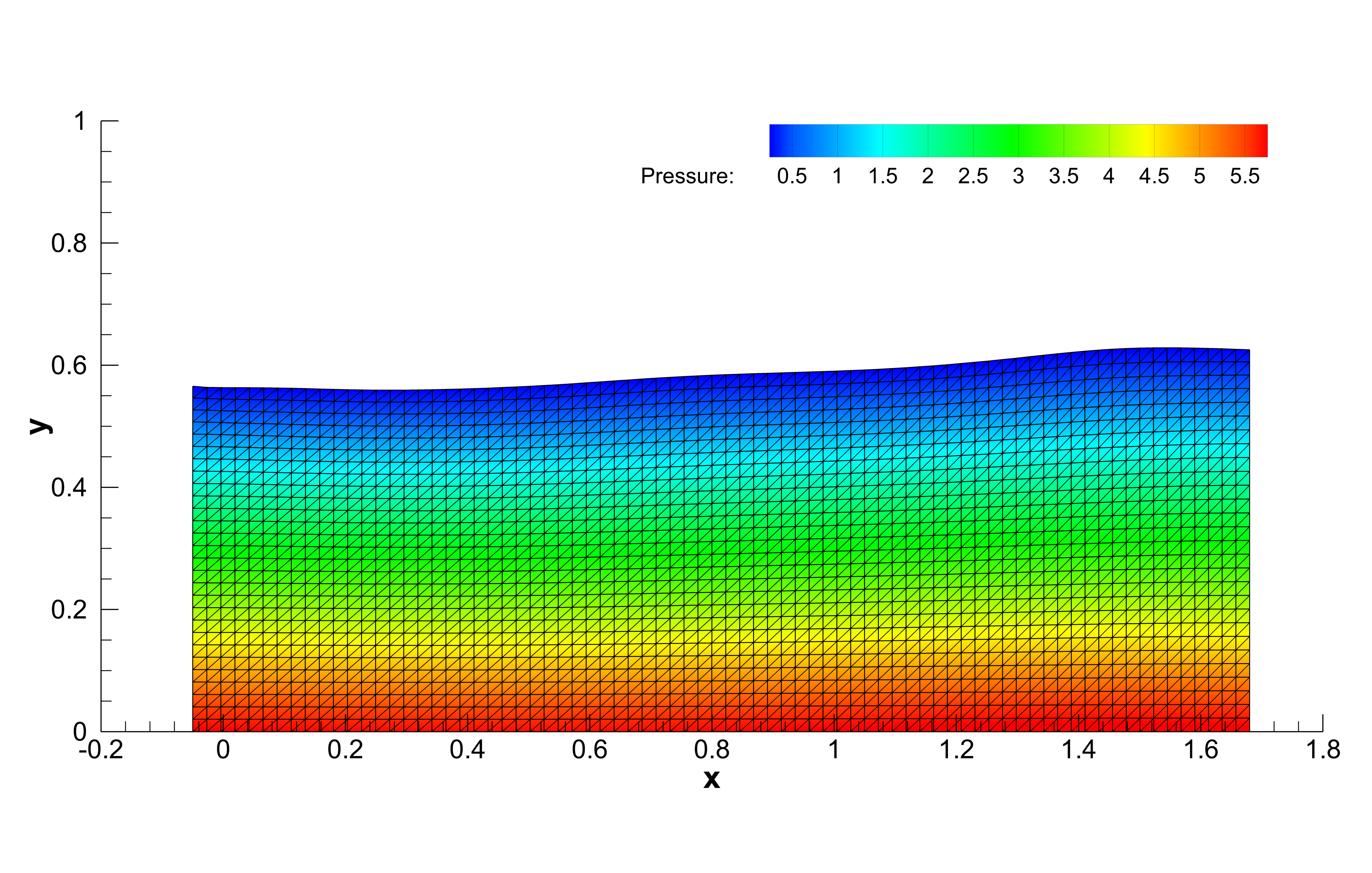} \\ 
		\includegraphics[trim=8 8 8 8,clip,width=0.65\linewidth]{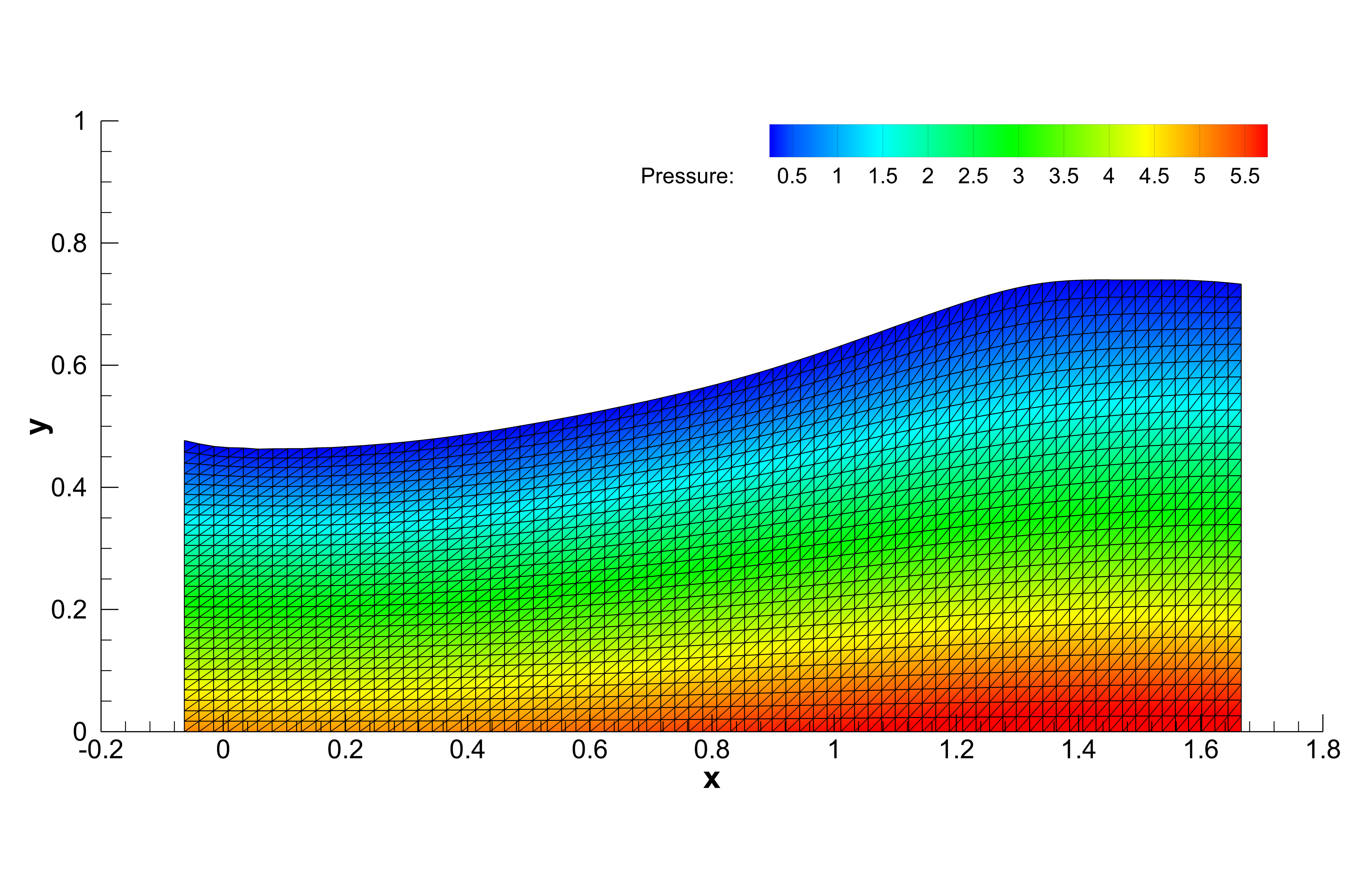}  
	\end{center}
	\caption{Free surface profile and pressure contours for the sloshing problem at time $t=1.2$, $t=2.15$ and $t=3.28$.}
	\label{fig:sloshing2d}
\end{figure}
 
\begin{figure}
	\begin{center}
		\includegraphics[trim=8 8 8 8,clip,width=0.65\linewidth]{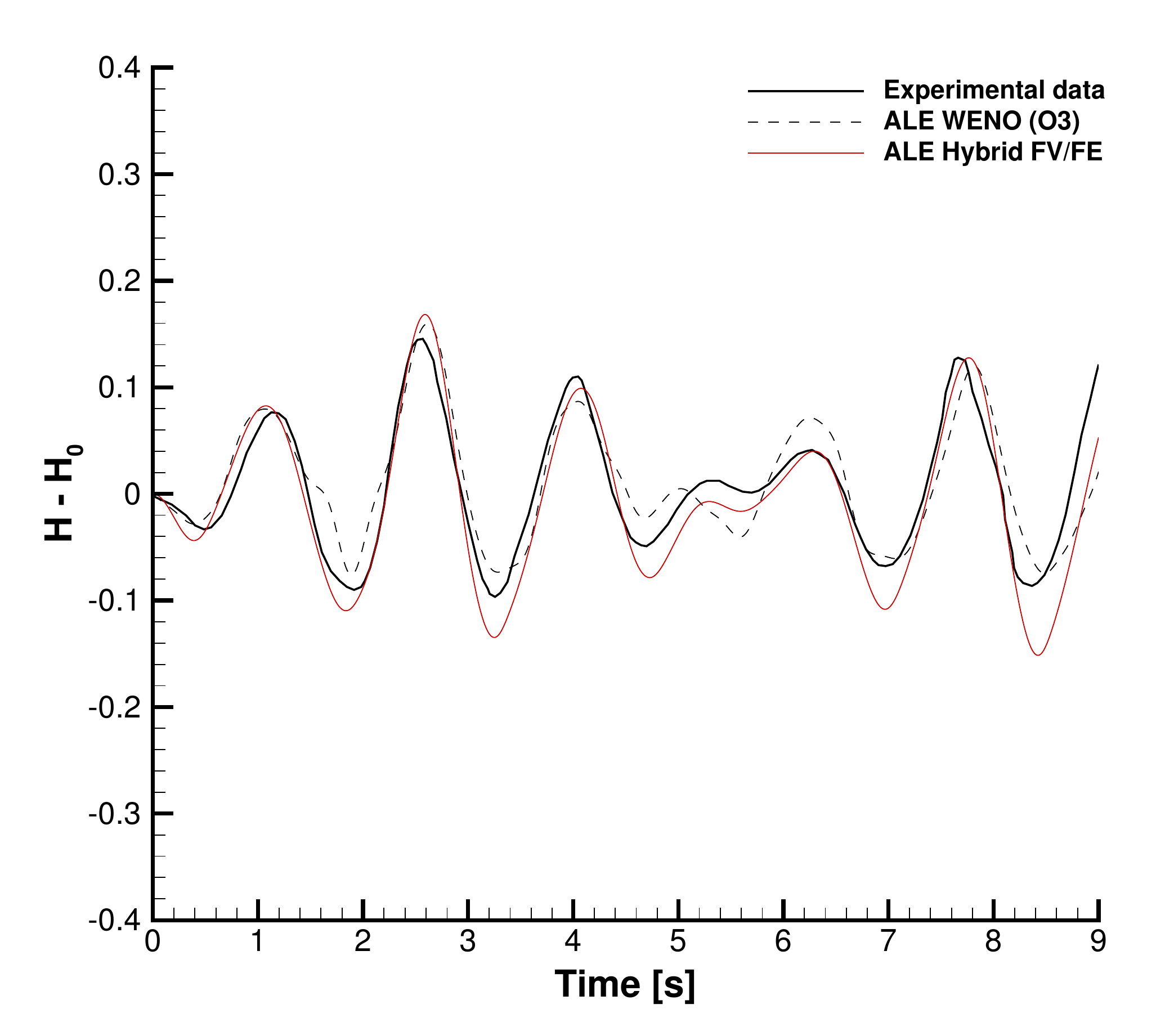} \\ 
	\end{center}
	\caption{Time series of the relative free surface elevation $H-H_0$ of a tracer point initially located in $\mathbf{x}=(0.05,0.6)$. Comparison with experimental data and a third order ADER WENO scheme applied to a simplified diffuse interface model of weakly compressible free surface flows. } 
	\label{fig:sloshingseries}
\end{figure}
%ALE 3 smoothing 1e3 

\subsection{Circular explosion 2D}\label{sec:2DCE}
The last test case is a circular explosion and is run with the weakly compressible solver. The initial conditions correspond to the 2D version of the well-known Sod problem, \cite{Toro,TT05,DPRZ16}, i.e. 
\begin{equation*}
	\bw(t,\mathbf{x}) =  \left\lbrace 
	\begin{array}{lr}
		\bw_L & \mathrm{ if } \; r \le 0.5,\\
		\bw_R & \mathrm{ if } \; r > 0.5,
	\end{array}
	\right. 
	\qquad \bw_L = \left( 1, 0, 0, 1\right),\quad \bw_R = \left( 0.125, 0, 0, 0.1\right) .
\end{equation*}
Besides, we set $\mu=2\cdot 10^{-4}$ as physical viscosity and an artificial viscosity of $c_{\alpha} = 1$ (see~\cite{BBDFSVC20} for further details about this parameter). The computational domain $\Omega=[-1,1]^2$ is assumed to be periodic everywhere. Regarding the numerical method, we consider the hybrid second order ALE scheme with ENO reconstruction and the Rusanov flux function \eqref{eqn.Rusanov_flux}. 
To avoid a fast degeneration of elements the mesh velocity has been determined from the local fluid velocity considering a smoothing parameter of $\varsigma=5$.
The spatial mesh, made of $85344$ primal elements, properly follows the wave fronts, as it can be observed in Figure~\ref{fig.CE85_ale_t025_mesh} where the  initial and the deformed mesh, at $t_{\mathrm{end}}=0.25$, are depicted. 
To validate the numerical results obtained, we follow  \cite{Toro} and solve a simplified 1D partial differential equation in radial direction derived from the Euler equations.
Figure~\ref{fig.CE85_ale_t025} shows a good agreement between the results obtained with the new ALE hybrid FV/FE method and the reference solution computed using a second order TVD scheme on a grid made of $10000$ elements.

\begin{figure}[h]
	\centering
	\includegraphics[trim= 10 5 5 5,clip,width=0.45\linewidth]{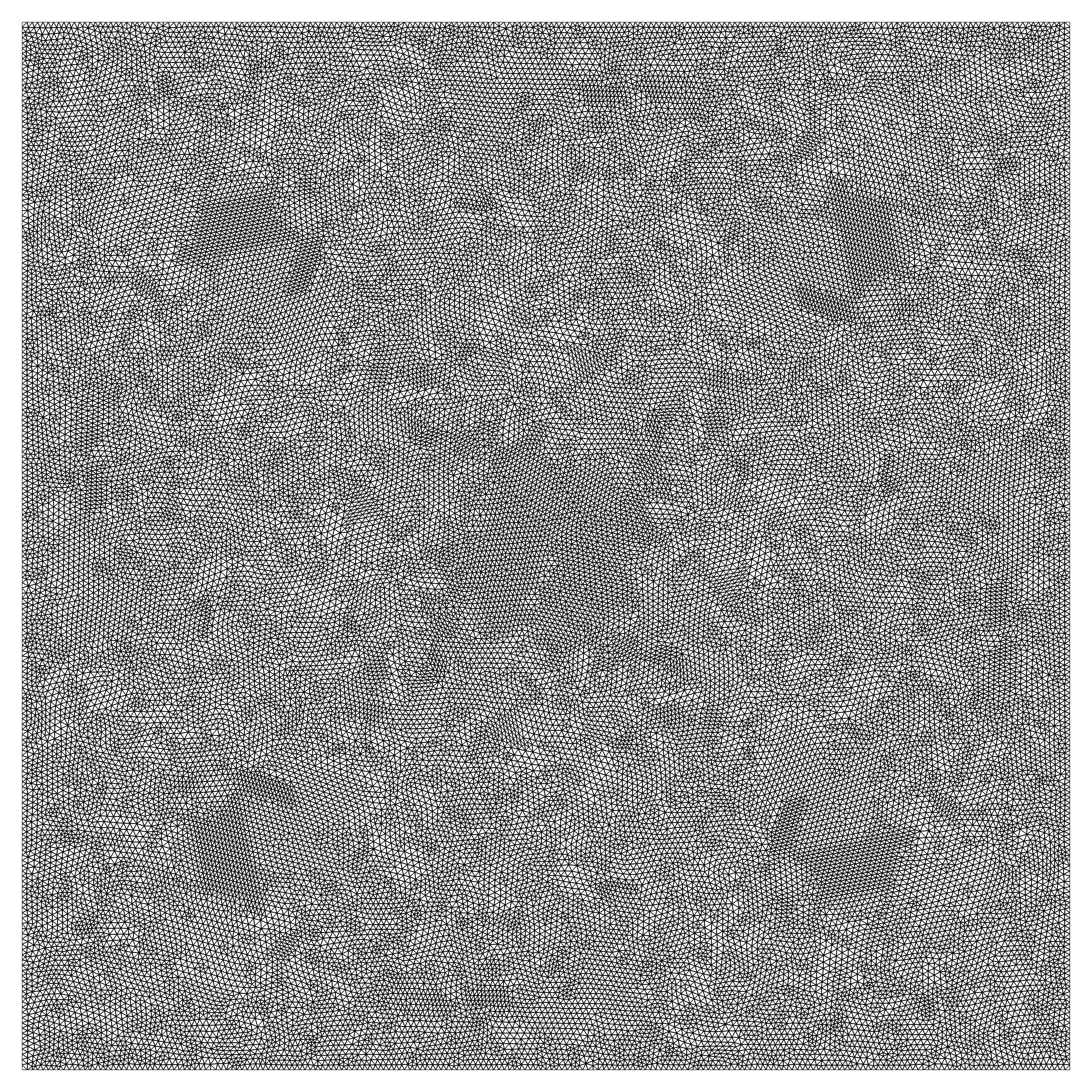}
	\includegraphics[trim= 10 5 5 5,clip,width=0.45\linewidth]{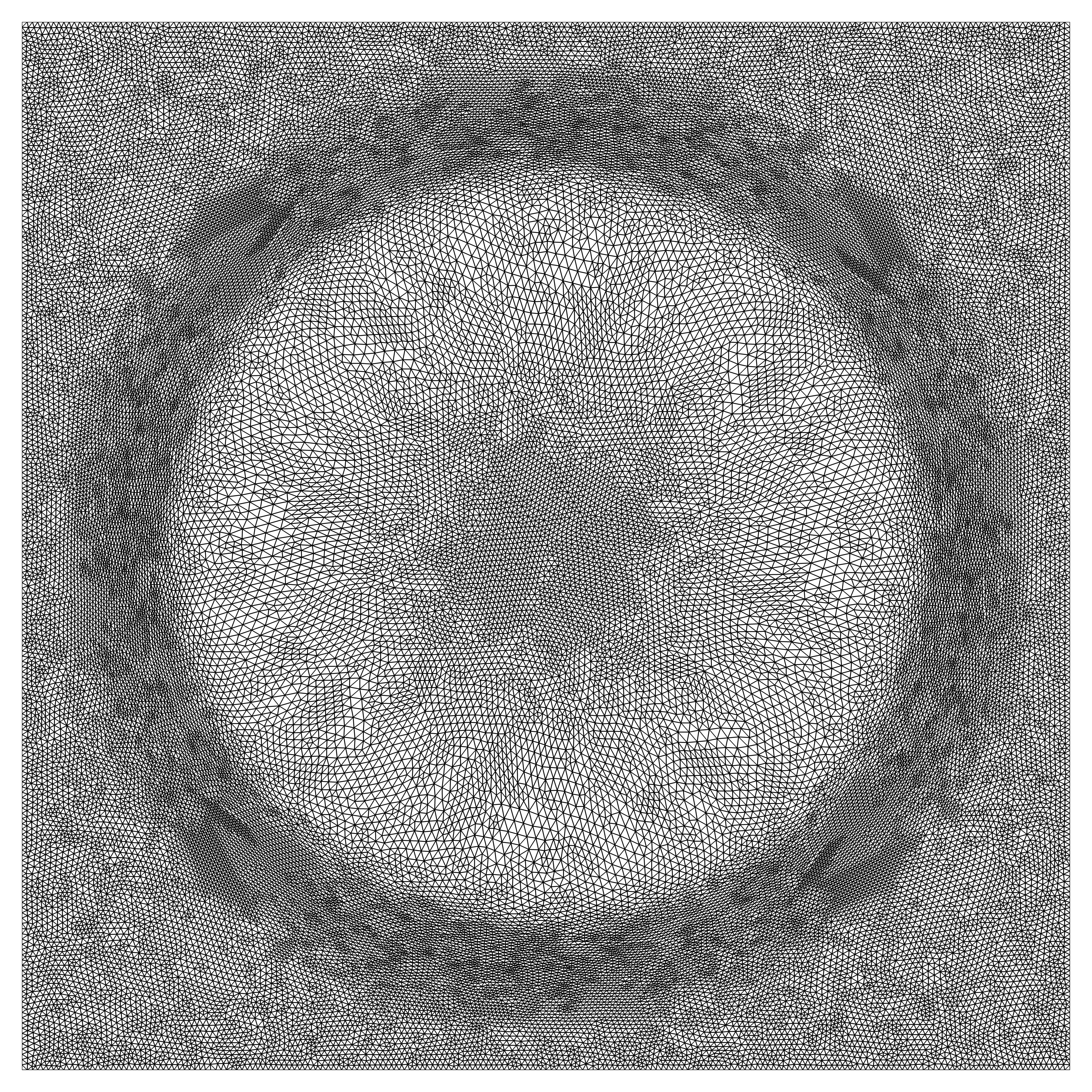}
	\caption{Initial grid and deformed mesh obtained for the circular explosion problem using the ALE hybrid FV/FE method.}
	\label{fig.CE85_ale_t025_mesh}
\end{figure} 
\begin{figure}[!htbp]
	\centering
	\includegraphics[trim= 10 5 5 5,clip,width=0.45\linewidth]{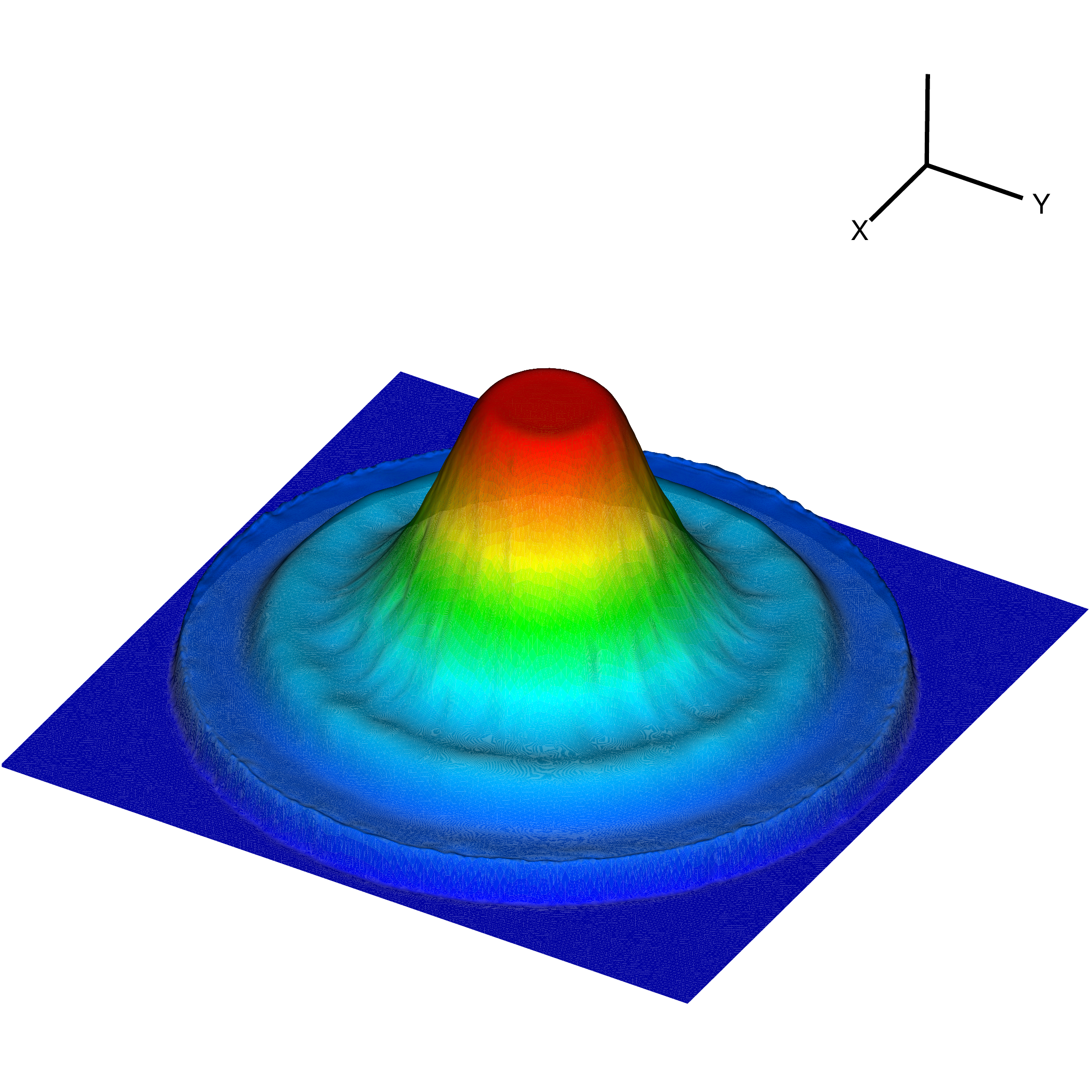} \hspace{0.05\linewidth}
	\includegraphics[trim= 10 5 5 5,clip,width=0.45\linewidth]{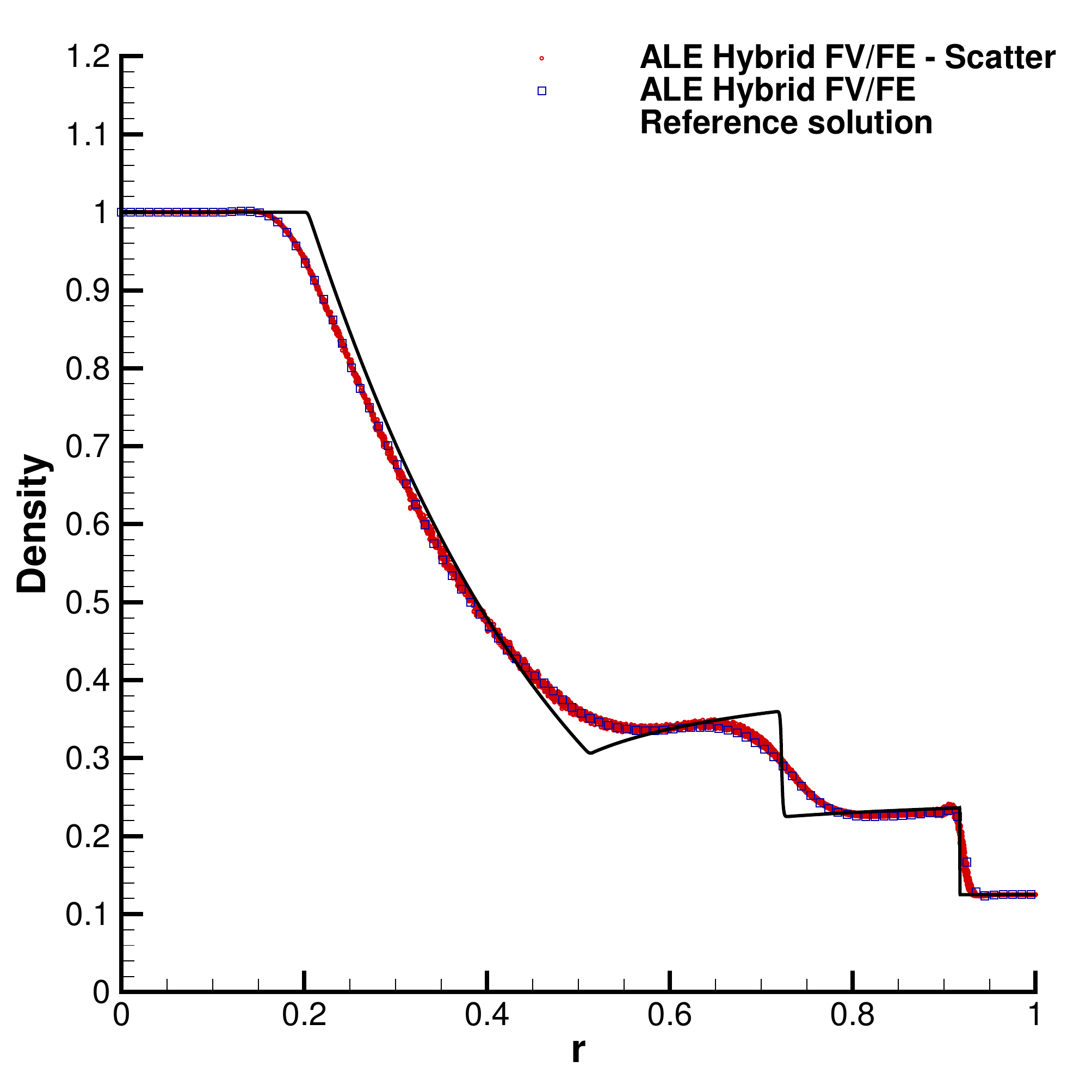}
	
	\vspace{0.05\linewidth}
	\includegraphics[trim= 10 5 5 5,clip,width=0.45\linewidth]{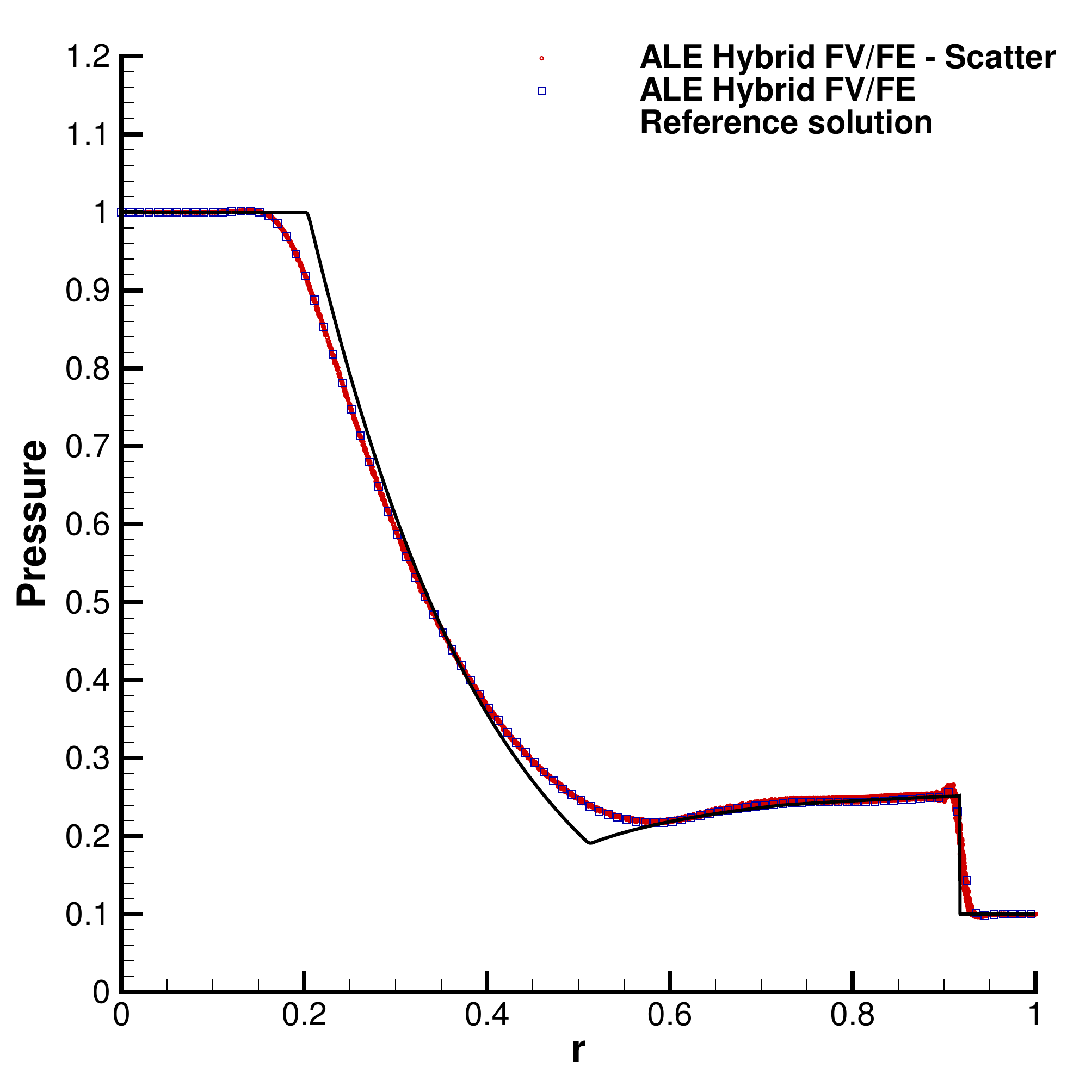} \hspace{0.05\linewidth}
	\includegraphics[trim= 10 5 5 5,clip,width=0.45\linewidth]{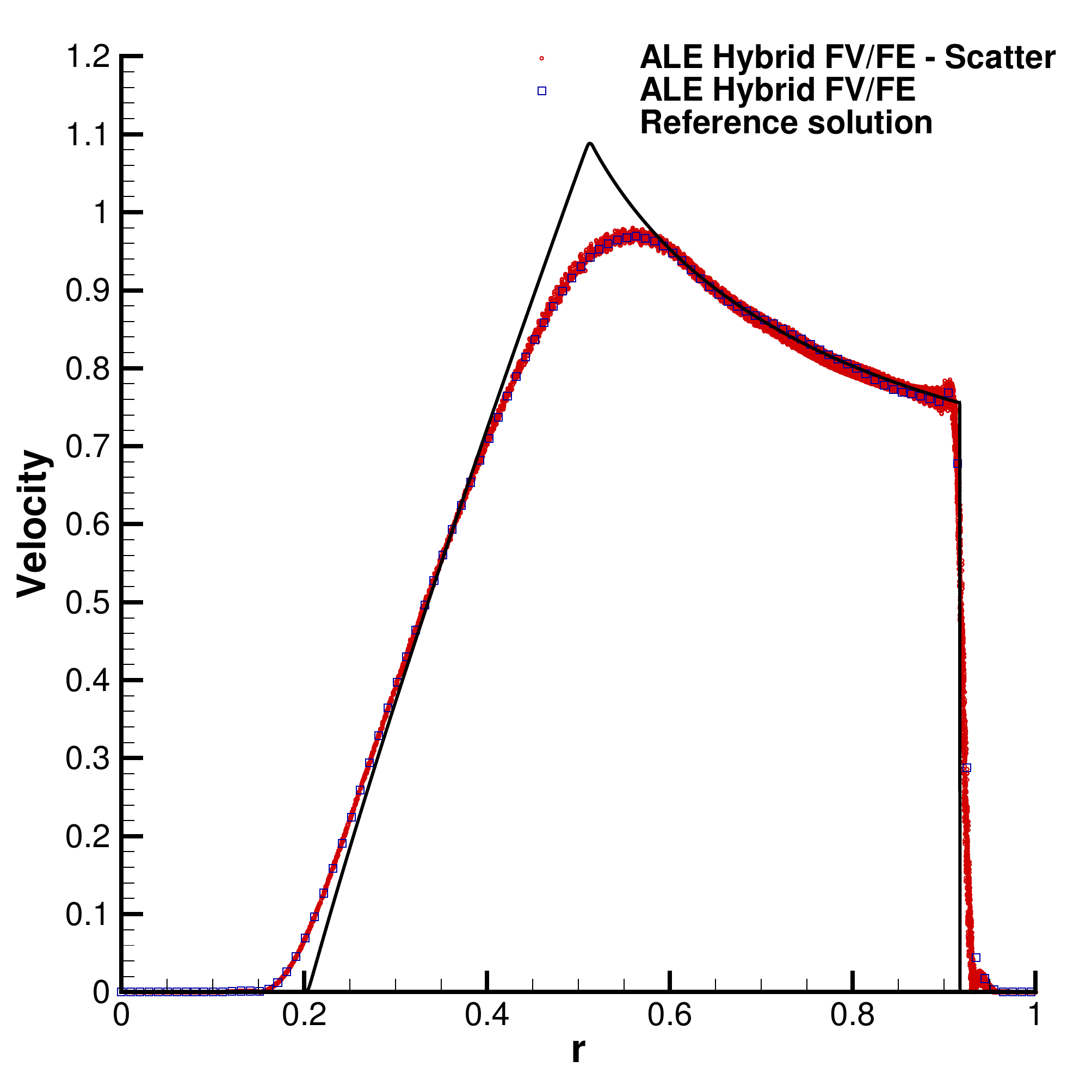}
	\caption{Circular explosion 2D computed using the ALE Hybrid FV/FE at $t=0.25$. Left top: contour plot of the density field. From right top to right bottom: 1D cut (blue squared line), scatter plot (red dots), reference solution (black continuous line), of the density, pressure and velocity.}
	\label{fig.CE85_ale_t025}
\end{figure}

% % % % % % % % % % % % % % % % % % % % % % % % % % % % % %
% % % % % % % % % % % % % % % % % % % % % % % % % % % % % %
%                  Conclusions                            %
% % % % % % % % % % % % % % % % % % % % % % % % % % % % % %
% % % % % % % % % % % % % % % % % % % % % % % % % % % % % %
\section{Conclusions}\label{sec:conclusions}
In this paper, we have introduced a novel semi-implicit hybrid finite volume / finite element method for the solution of the incompressible and weakly compressible Navier-Stokes equations on moving unstructured staggered meshes. 
The hybrid FV/FE scheme is based on a suitable split of the governing PDE system into subsystems, each of which is then discretized with the most suitable method. For the nonlinear convective and viscous terms, i.e. for the hyperbolic-parabolic subsystem, an explicit ALE finite volume method on moving meshes is employed, making use of a space-time divergence form of the related subsystem. For incompressible flows, instead of the simple Rusanov flux, which was used in previous work on hybrid FV/FE schemes, we adapt the Ducros flux to the context of moving ALE meshes and thus obtain a provably kinetic energy preserving / kinetic energy stable discretization of the nonlinear convective terms. 
The mesh motion can be prescribed in two different ways: i) the mesh can be moved with the local fluid velocity, in combination with a suitable smoothing step that is based on the solution of a diffusion equation for the mesh velocity and which allows keeping a good mesh quality during the simulation; ii) the mesh can be moved by imposing a velocity on the boundary and by solving a Laplace equation for the mesh velocity in the interior of the domain. The pressure subsystem is discretized at the aid of continuous $\mathbb{P}_1$ finite elements. 

The new numerical approach has been successfully applied to a series of test problems for the incompressible and the weakly compressible Navier-Stokes equations, including also weak shock waves, rarefactions and contact discontinuities. Whenever possible, we have compared our results to available exact or numerical reference solutions. 

In the future, we plan to extend this approach to full fluid-structure interaction (FSI) problems, where the dynamics of the solid are governed either by the equations of linear elasticity or nonlinear hyperelasticity, see, e.g.~\cite{GodunovRomenski72,Rom1998,PeshRom2014}.

% % % % % % % % % % % % % % % % % % % % % % % % % % % % % %
% % % % % % % % % % % % % % % % % % % % % % % % % % % % % %
%                Acknowledgment                          %
% % % % % % % % % % % % % % % % % % % % % % % % % % % % % %
% % % % % % % % % % % % % % % % % % % % % % % % % % % % % %
\section*{Acknowledgements}
The Authors acknowledge the financial support of the Italian Ministry of Education, University and Research (MIUR) via the Departments of Excellence Initiative 2018--2022 attributed to DICAM of the University of Trento (grant L. 232/2016) and in the framework of the PRIN 2017 project \textit{Innovative numerical methods for evolutionary partial differential equations and applications}. L.R. acknowledges funding from the Spanish Ministry of Universities and the European Union-Next GenerationEU under the project RSU.UDC.MS15. S.B. was funded by the Spanish Ministry of Science and Innovation, grant number PID2021-122625OB-100. 
SB, MD and LR are members of the GNCS group of INdAM.

The authors would like to acknowledge support from the CESGA, Spain, for the access to the FT3 supercomputer and to the Leibniz Rechenzentrum (LRZ), Germany, for granting access to the SuperMUC-NG supercomputer (project number pr63qo).

Also, they would like to acknowledge the support provided by the Shark-FV 2022 conference where this work has been finished.

%
%
%
%% % % % % % % % % % % % % % % % % % % % % % % % % % % % % %
%% % % % % % % % % % % % % % % % % % % % % % % % % % % % % %
%%              Bibliography
%% % % % % % % % % % % % % % % % % % % % % % % % % % % % % %
%% % % % % % % % % % % % % % % % % % % % % % % % % % % % % %
%\section*{References}

\bibliographystyle{elsarticle-num}
\bibliography{./mibiblio}

% % % % % % % % % % % % % % % % % % % % % % % % % % % % % %
% % % % % % % % % % % % % % % % % % % % % % % % % % % % % %
%                   Appendix                              %
% % % % % % % % % % % % % % % % % % % % % % % % % % % % % %
% % % % % % % % % % % % % % % % % % % % % % % % % % % % % %
%\appendix

\end{document}